\newcommand{\dx}{h}
\newcommand{\Dir}{\mathcal{D}}
\newcommand{\LD}{\lambda_{\Dir}}
\newcommand{\LDh}{\lambda_{\Dir^W}^\dx}
\newcommand{\dir}{\mathcal{V}}
\newcommand{\Mmn}{M^{N_1\times N_2}}
\newcommand{\Rmn}{\R^{N_1\times N_2}}
\newcommand{\Sk}{{F^\dx}}
\newcommand{\Grd}{G^\dx}
\newcommand{\Fe}{F^\e}
\begin{document}
\title[PDE for the Rank One Convex Envelope]{A partial differential equation for the rank one convex envelope}
\author{Adam~M. Oberman}
\author{Yuanlong Ruan}
\date{\today}
\begin{abstract}
A Partial Differential Equation (PDE) for the rank one convex envelope is introduced. Existence and uniqueness of viscosity solutions to the PDE is established. Elliptic finite difference schemes are constructed and convergence of finite difference solutions to the viscosity solution of the PDE is proven.  Computational results are presented and laminates are computed from the envelopes.  
Results include the Kohn-Strang example, the classical four gradient example, and an example with eight gradients which produces nontrivial laminates.  \end{abstract}

%\subjclass[2000]{Primary: 35J70, 52A41, Secondary:  93E20, 65N06 }
%, 65N06, 65N12, 65M06, 65M12, 35B50, 35J60, 35R35, 35K65, 49L25}
% 35J70 Elliptic partial differential equations of degenerate type
% 26B25 Convexity, generalizations
% 52A41 Convex functions and convex programs
% 52A40 Inequalities and extremum problems
% 65N06 Finite difference methods ( in Partial differential equations, boundary value problems )
% 93E20 Optimal stochastic control
% Variational Problems
% Measures etc
%\keywords{convex envelope, obstacle problem, partial differential equation}

%\thanks{The first author would like to thank Robert V. Kohn for suggesting this problem, and Jeff Calder for valuable feedback on the manuscript.  }
\maketitle
%\tableofcontents

\section{Introduction}
In this article, we establish a nonlinear elliptic Partial Differential Equation (PDE) for the rank one convex envelope.  The PDE is based on a viscosity solutions formulation of the Legendre-Hadamard condition, \eqref{LegendreHadamard} below,  along with an obstacle problem.  %The theory of viscosity solutions was developed in the early 1980s for scalar first order nonlinear Hamilton-Jacobi equations \cite{crandall1983viscosity} and later for degenerate elliptic second order scalar PDEs \cite{CIL}.

The rank one convex envelope is a generalized convex envelope which arises in nonconvex vector variational problems.  The study of these problems goes back to Morrey \cite{morrey1952quasi} with extensive work in the 1980s \cite{BallElasticity, kohn1986optimal1, BallJames87, chipot1988equilibrium}.  The field is now well-established, with a number of textbook references available \cite{Dacorogna2, Muller, pedregal1997parametrized}.   

In this article, we derive and prove well-posedness (existence and uniqueness of viscosity solutions) for the PDE for the directional convex envelope.  Uniqueness follows from the comparison principle for viscosity solutions.  Existence of viscosity solutions follows from Perron's method: the solutions are continuous up the boundary of the domain. Some of these results are new even in the case of the usual convex envelope.  

We build a wide stencil elliptic finite difference scheme for the directional convex envelope.    The finite difference schemes have  unique solutions which can be found as the fixed point of an iterative method.  The existence and uniqueness results for the solutions of schemes is also new, even in the special case of convex envelopes.  Convergence of the solutions of the numerical scheme to the directional convex envelope follows by applying the Barles-Souganidis convergence theorem. 

Vector variational problems in the two by two matrix case involve functions from $\R^2$ to $\R^2$.  In this case, the corresponding PDE is for scalar functions defined on $\R^4$.  Numerical examples are computed in four dimensions.  From the approximate rank one convex envelope, we compute the associated laminates, by iteratively expanding the barycenter in rank one directions, using points which lie in the rank one convex hull of the minimal level set.

The viscosity solutions formulation of convex functions was studied in \cite{alvarez1997convex}. A related PDE for the (usual) convex envelope of a scalar valued function was derived in~\cite{ObermanConvexEnvelope}.  The regularity of the solution of the PDE was studied in \cite{oberman2011dirichlet} and \cite{de2015optimal}.   In \cite{ball2000regularity} regularity of the rank one convex envelope is established.

 %The uniqueness of  solutions is established using  the comparison principle and existence is established using Perron's method.   The theory is not typically applied directly to vector-valued variational problems, since it requires a comparison principle to hold.   The seminal paper \cite{BSNum} gave general conditions under which approximations convergence to viscosity solutions. 

Computations of the rank one convex envelope were performed in \cite{Dolzmann, DolzmannWalkington} in four spatial dimensions, using directional convexification.  A convergence rate for solutions was established in \cite{DolzmannWalkington}.   See also~\cite[Chapter 6]{dolzmann2003variational}.
By increasing the number of directions used, at extra computational cost, the rate of convergence of the algorithm was improved \cite{bartels2004linear}.  Polyconvex envelopes were computed in~\cite{bartels2005reliable}.

A wide stencil elliptic finite difference scheme for the convex envelope was presented in \cite{ObermanEigenvalues} and further studied in \cite{ObermanCENumerics}. 
Laminates were previously computed using  a non-convex optimization method by Aranda and Pedregal~\cite{pedregal2001computation, aranda2001numerical}.  %Their method used non-convex optimization, where the search space grows exponentially in the number of levels used in the approximation.  % The success of the search depends on the initialization, and can require thousands of initializations to converge to a solution.  
The directional convex envelope, for the special case of coordinate directions, was studied in \cite{matouvsek1998functional}.  An algorithm for the directional convex envelope of a general direction set in the plane was implemented in \cite{franvek2009computing}, along with a proof that the algorithm terminated in polynomial time.    
  
\subsection{Variational problems and generalized convex envelopes}
In this section, we briefly review how the rank one convex envelopes arise in variational problems. Consider the variational problem for vector valued functions $u: \Omega \subset \R^{N_1} \to \R^{N_2}$,
\bq\label{J}
\min_{u\in \mathcal{A} } J(u) = \int_\Omega G(\grad u(x)) dx, 
\eq
for a suitable  set of  admissible functions defined on the domain $\Omega$, 
along with appropriate boundary conditions.

In  the vector-valued case, which corresponds to $N_2 > 1$, minimizers may not exist without some kind of convexity assumption on $G$.  The correct notion of convexity in this setting is \emph{quasiconvexity} \cite{morrey1952quasi}.
The quasiconvex envelope is defined by taking perturbations of $G$ with gradients of smooth, compactly supported functions,~$\phi$,  
\bq\label{QG}
G^{qc}(M) = \inf_{\phi \in C^\infty_0(\Omega, \R^N)} \frac{1}{\abs{\Omega}} \int_\Omega G(M + \grad \phi) dx.
\eq
Replacing the $G$ in  \eqref{QG}  with,  $G^{qc}$, the quasiconvex envelope of $G$, 
results in a problem for which the minimum is attained, and the minimum is equal to the infimum of the original problem.   While this definition is natural, it is not tractable. 
Two related and more tractable notions of convexity have been introduced,  \emph{rank one convexity} and \emph{polyconvexity}.   Rank one convexity is necessary for quasiconvexity, but not sufficient (at least in dimension $N_1=3$) for quasiconvexity. (The notions coincide with convexity in the scalar-valued case.) 
Rank one convexity arises from restricting the minimization in \eqref{QG} to a smaller class of functions.  
The minimizers are gradient Young measures  which correspond to weak solutions of the relaxed minimization problem for the original energy (which has no classical minimizers).   The rank one minimizers are called laminates.  A visualization of the laminates can be found in \cite{Muller} and in \cite{aranda2001numerical, pedregal2001computation}.  The laminates are represented schematically as graphs, with edges in rank one directions (see the next section and \S\ref{sec:NumResults} below).

\subsection{Convexity and rank one convexity}
The function $f: \Rn \to \R$ is convex if
\begin{equation}\label{ce:defn}
f( \lambda x_1 + (1-\lambda)x_2) \le \lambda f(x_1) + (1-\lambda)f(x_2), 	
\end{equation}
for all $x_1,x_2 \in \Rn$ and $\lambda \in [0,1]$.  The convex envelope of the function $f$, $f^{ce}$,  is defined as 
\[
f^{ce}(x) = \sup \{ v(x) \mid v(y) \le f(y) \text{ for all } y, \quad \text{$v$ is convex } \}.
\]
The convex envelope can be represented (see \cite[Theorem 2.35]{Dacorogna2}) as 
\begin{equation}\label{cerep}
	f^{ce}(x) = \inf  \left \{ \sum_{i=1}^{n+1}  w_i f(x_i) ~\middle |~  x = \sum_{i=1}^{n+1}  w_i x_i  \right \}
\end{equation}
where $\sum_{i=1}^{n+1} w_i = 1$, and each $w_i \ge 0$.

Let  $\Mmn$ be the set of $N_1\times N_2$ matrices. A function $G:\Mmn \to \R$ is rank one convex if
\[
G( \lambda F_1 + (1-\lambda)F_2) \le \lambda G(F_1) + (1-\lambda)G(F_2), 
\]
for all  $F_1, F_2 \in \Mmn$   with $\rank(F_1-F_2)=1$,  and $\lambda \in [0,1]$.   If $G$ is twice differentiable, rank one convexity is equivalent to the Legendre-Hadamard condition
\begin{equation}
	\label{LegendreHadamard}
\frac{d^2 G}{d M^2}(F) \ge 0, 
\quad \text{ for all } F, M \in \Mmn \text{ with } \rank(M) = 1.
\end{equation}

The first representation we give of the rank one convex envelope of $G$, is analogous to \eqref{ce:defn}.
\bq\label{R1CEdefn}
G^{rc}(M) = \sup\{  V(M) \mid  V(Y) \leq G(Y) \text{ for all } Y, \quad V \text{ is rank one convex} \}.
\eq	

A second representation for the rank one convex envelope generalizes~\eqref{cerep}, \cite[Section 6.4]{Dacorogna2}.  Assume that there exists at least one rank convex function below $G$.  Then  
\[%\begin{equation}\label{r1ceRep}
	G^{rc}(F) = \inf \left \{ 
	\sum_{i=1}^l \lambda_i G(F_i) ~\middle |~ (\lambda_i, F_i)_{i=1}^l \text{ an $(H_l)$ sequence  with barycenter $F$}
		\right \}
\]%\end{equation}
In this case, we have a much more complicated structure for the class of points with a given barycenter.   It is defined in terms of $(H_l)$ sequences.
\begin{definition}%[$(H_l)$ sequence with barycenter $F$]
Given $F\in \Mmn$, if we can write
\begin{equation}
F=\lambda_{1}F_{1}+\lambda_{2}F_{2},
\quad \text{ where } \rank(F_{1}-F_{2})\leqslant1
\label{eq:hl}%
\end{equation}
and $\lambda_{1}+\lambda _{2}=1$, $0<\lambda_{1},\lambda_{2}<1$
then we say $(\lambda_i, F_i)_{i=1}^2$ is an $(H_2)$ sequence with barycenter $F$.
Given an $(H_l)$ sequence with barycenter $F$, inductively define an $(H_{l+1})$ sequence with barycenter $F$ by choosing some $F_j$, $j \in \{1,\dots, l\}$ and building an $(H_2)$ sequence $(\mu_k,G_k)_{k=1}^2$  with barycenter $F_j$.  Then replace the single term $(\lambda_j, F_j)$ with the two terms $(\lambda_k \mu_k, G_1), (\lambda_k \mu_2, G_2)$.
The result (after relabelling) is 
\[
(\lambda_i, F_i)_{i=1}^{l+1} \text{ an $(H_{l+1})$ sequence with barycenter } F = \sum_{i=1}^{l+1}\lambda_{i}F_{i}.
\]
\end{definition}

A constructive method for the rank one convex envelope, is also available~\cite{kohn1986optimal1,kohn1986optimal2}.
Let $G_0 = G$ and define iteratively
\begin{equation}\label{KSiteration}
G_{k+1} = \inf \left \{
\lambda G_k(F_1) + (1-\lambda) G_k(F_2) \mid F = \lambda F_1 + (1-\lambda)F_2, ~\rank(F_1 -F_2) = 1
\right \}.	
\end{equation}
Then the iterations converge to $G^{rc}$.

\subsection{Directional convexity}
We give a definition of directional convexity ($\Dir$-convexity) which recovers: (i) standard convexity when $\Dir = \Rn$, and (ii) rank one convexity when $\Dir$ is  the set of rank one directions (where $\Rn$ is identified with $\Rmn$).

\begin{definition}
The set $\Dir \subset \R^n$ is a direction set if (i) the span of $\Dir$ is the entire space and (ii) $\Dir$ is symmetric: if $d \in \Dir$ then $-d \in \Dir$, (iii) $0 \not \in \Dir$. 
%Define the normalized direction set $\widehat \Dir = \{ \hat d \mid d\in \Dir \}$, where $\hat d = d/\norm d$.
The continuous function $u: \Rn \to \R$ is $\Dir$-convex  (directionally convex) if
\begin{equation} \label{d.convex}
u(\lambda x + (1-\lambda) y) \le \lambda u(x) + (1-\lambda)  u(y), 
\quad \text{ for all $0\le \lambda \le 1$, and all ${x-y} \in  \Dir$}.
%and all $\widehat {x-y} \in \widehat \Dir$}.
\end{equation}
The $\Dir$-convex envelope of a given function $g$ is defined as the pointwise supremum of all $\Dir$-convex functions which are majorized by~$g$,
\bq\label{DCEdefn}%\tag{DCE}
g^{\Dir}(x) = \sup\{  v(x) \mid  v(y) \leq g(y) \text{ for all } y, \quad v \text{ is $\Dir$-convex} \}.
\eq	
\end{definition}

\begin{remark}
In the case where $u$ is  twice differentiable, it can be seen by taking the limit of finite differences, that  $\Dir$-convexity implies 
\bq\label{convexsecondderiv}
\frac{ d^2 u }{ d v^2 }  \ge 0, \quad \text{ for all $v\in \Dir$}.
\eq
which generalizes the Legendre-Hadamard condition~\eqref{LegendreHadamard} to general direction sets.
\end{remark}

\section{The PDE for the rank one convex envelope}
In this section we study the fully nonlinear elliptic Partial Differential Equation
for the directionally convex ($\Dir$-convex) envelope, (problem \eqref{obstacle}, below).  This equation includes the rank one convex envelope and the (usual) convex envelope as special cases.  A synthetic example of a directional convex envelope for a different set of directions is also presented below, in Example~\ref{ex:SyntheticFourGradient}, for illustration.

Comparison results for viscosity solutions are well-established.  The standard comparison result of viscosity solutions theory is \cite[Theorem 3.3]{CIL}, which applies to operators which are either uniformly elliptic or strictly proper.  Neither of these apply to PDE~\eqref{obstacle}.   However, the same result can be applied in the special case where it is possible to perturb a supersolution to a strict supersolution.   This is the strategy applied below, which is described in more detail in the sequel. 
The existence of solutions, and continuity up to the boundary, is established using Perron's method.

\subsection{The $\Dir$-convex envelope operator}
We consider the problem on a bounded domain $\Omega$, with $\Omega \subset D= [0,1]^n$.  Assume that the given function $g:\Rn \to \R$ is continuous, and that there exists a continuous  function $g_0:\Rn \to \R$ with
\begin{equation}\label{g_assumption}
g_0 \text{ is $\Dir$-convex on $\Rn$},
\qquad
g \geq g_0\text{ in } \Omega, 
\qquad
g = g_0 \text{ on } D\setminus \Omega.
\end{equation}

\begin{remark}\label{rem:gquad}
The assumption \eqref{g_assumption} is consistent with previous work, for example Lemma 9.7 of \cite{pedregal1997parametrized} and Theorem 6.10 of \cite{Dacorogna2}.
Often we are interested in values $g\le c_0$.  A natural way to enforce \eqref{g_assumption} in this case is to simply replace $g$ with $\max(g,g_0)$ where $g_0$ is a large quadratic function. 
\end{remark}

\begin{remark}
Viscosity solutions of the Dirichlet problem need not be continuous up to the boundary \cite[Section 7]{CIL}.  Additional assumptions which ensure continuity up to the boundary can be of two types.   The first type is a regularity requirement of the boundary.  For the Laplacian operator, a barrier can be constructed for domains satisfying an exterior cone condition \cite{GTBook}.   For the Dirichlet problem for convex envelope, solutions are continuous up to the boundary if the boundary is strictly convex~\cite{caffarelli1986dirichlet, oberman2011dirichlet}.    On the other hand, on square domains,  if $g$ is concave, the convex envelope may be strictly below $g$ on the boundary \cite{ObermanCENumerics}.   In fact, our computations are usually performed on non-strictly convex domains. 

Continuity up to the boundary is needed to apply the Barles-Souganidis theorem~\cite{BSNum}  (This requirement is referred somewhat confusingly to as \emph{strong comparison} in the article).   However recent work by Froese, \cite{froeseGauss}, establishes convergence away from the boundary  without the strong comparison assumption. 

The assumption \eqref{g_assumption}, which we missed in our earlier work~\cite{ObermanCENumerics}, allows us to establish continuity up to the boundary. 
\end{remark}

\begin{definition}
Let $\Dir$ be a direction set in $\Rn$, and let $\mathcal{S}^{n}$ be the set of symmetric $n \times n$ matrices.  
Define the $\Dir$-convexity operator, $\LD : \mathcal{S}^{n} \to \R$, 
\begin{equation}\label{LDdefn}
\LD(M) =  \inf_{v \in \Dir} \frac{1}{\abs{v}^2} v^\intercal M v, 
\end{equation}
and the $\Dir$-convex envelope operator, $F^{\Dir,g}:  \mathcal{S}^{n} \times \R \times \Rn \to \R$
\bq\label{DCE}
F^{\Dir,g}(M,r,x) = \max \left\{  r - g(x), -\LD(M)   \right\} 
\eq		
When the context is clear, we write $F= F^{\Dir,g}$. 
\end{definition}
The obstacle problem for the $\Dir$-convex envelope in $\Omega$ is to solve
\begin{equation}\label{obstacle}\tag{DCE}
F^{\Dir,g}(D^2u(x),u(x),x) =  \max \left\{  u(x) - g(x), -\LD(D^2u(x))   \right\}  = 0, 
\end{equation}
for $x \in \Omega$, along with Dirichlet boundary conditions 
\begin{equation}\label{BCD}\tag{D}
u(x) = g(x),\quad  \text{for } x \in \partial \Omega	.
\end{equation}
%See Figure~\ref{fig:Obstacle}.
%\begin{figure}
%\begin{center}
%\scalebox{.3}{\includegraphics{}}
%\end{center}
%\caption{Obstacle problem for the $\Dir$-convex envelope operator.}
%\label{fig:Obstacle}
%\end{figure}

\subsection{Definition of viscosity solutions}
\begin{definition}\label{defn:degenerateelliptic}
	The function $F:  \mathcal{S}^{n} \times \R \times D : \to \R$ is proper and degenerate elliptic (in the sense of \cite{CIL}) if 
\[
F(M,r,x) \le F(N,s,x), \quad \text { for all } M \succeq N, r \le s, \text{ and all } x \in D
\]	
where $Y \preceq  X$ means $d^\intercal Yd \le d^\intercal X d$ for all $d\in \Rn$.
\end{definition}

\begin{lemma}\label{lem:degell}
	The functions $F^{\Dir,g}$ and $-\LD$ are degenerate elliptic, in other words
\[
-\LD(X) \le -\LD(Y),
\quad \text{ whenever $Y \preceq X$ }
\]	
and	
\[
F^{\Dir,g}(X,r,x) \le F^{\Dir,g}(Y,s,x), \quad \text{ whenever $r \le s$ and $Y \preceq X$}.
\]	
furthermore, for any constant $c$, 
\begin{equation}\label{LDhomog}
	\LD(X + cI)  = \LD(X) + c,
\end{equation}

\end{lemma}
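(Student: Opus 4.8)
The three claims are essentially elementary consequences of the variational definition \eqref{LDdefn} of $\LD$ together with the order structure on $\mathcal{S}^n$, so the plan is to verify them one at a time, starting with the monotonicity of $-\LD$, then deducing monotonicity of $F^{\Dir,g}$, and finally checking the shift identity \eqref{LDhomog}.

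First I would prove that $-\LD$ is degenerate elliptic. Suppose $Y \preceq X$, i.e. $d^\intercal Y d \le d^\intercal X d$ for all $d \in \Rn$; in particular this holds for every $v \in \Dir$. Dividing by $|v|^2 > 0$ (possible since $0 \notin \Dir$) and taking the infimum over $v \in \Dir$ on both sides gives $\LD(Y) \le \LD(X)$, hence $-\LD(X) \le -\LD(Y)$. One small point worth a sentence: the infimum defining $\LD(M)$ is over a nonempty set (the span of $\Dir$ is all of $\Rn$, so $\Dir \neq \emptyset$), so $\LD(M) \in [-\infty, \infty)$ is well-defined; for the inequality argument we only need monotonicity of $\inf$, which is valid even allowing the value $-\infty$.

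Next, the monotonicity of $F^{\Dir,g}$ follows immediately: if $r \le s$ and $Y \preceq X$, then $r - g(x) \le s - g(x)$, and by the previous step $-\LD(X) \le -\LD(Y)$; taking the maximum of the two entries preserves both inequalities, so
\[
F^{\Dir,g}(X,r,x) = \max\{r - g(x), -\LD(X)\} \le \max\{s - g(x), -\LD(Y)\} = F^{\Dir,g}(Y,s,x).
\]
Finally, for \eqref{LDhomog}, fix $v \in \Dir$ and compute $\frac{1}{|v|^2} v^\intercal (X + cI) v = \frac{1}{|v|^2} v^\intercal X v + \frac{c}{|v|^2} v^\intercal v = \frac{1}{|v|^2} v^\intercal X v + c$; taking the infimum over $v \in \Dir$ and pulling the additive constant $c$ outside the infimum yields $\LD(X + cI) = \LD(X) + c$.

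There is no real obstacle here: the only mild care needed is handling the possibility that $\LD$ takes the value $-\infty$ on some matrices (which happens precisely when $v^\intercal M v$ is unbounded below over $\Dir$, e.g. when $\Dir$ is unbounded), but since all the manipulations are monotonicity of infimum and addition of a constant, they remain valid in the extended reals, and \eqref{LDhomog} reads $-\infty = -\infty + c$ in that degenerate case, which is fine. I would state this caveat briefly and otherwise present the three verifications directly.
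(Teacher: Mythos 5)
Your argument is correct and follows essentially the same route as the paper's proof: monotonicity of $\LD$ from the definition and the order $\preceq$, monotonicity of the max for $F^{\Dir,g}$, and a direct computation using $v^\intercal(cI)v/|v|^2 = c$ for the shift identity \eqref{LDhomog}. The extra caveat about $\LD$ possibly taking the value $-\infty$ is harmless and not needed in the paper's setting.
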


\begin{proof}
	First suppose $X \preceq Y$.  Then for all $d \in \Rn$, $d^\intercal X d \le d^\intercal Y d$.  So $\LD(X) \le \LD(Y)$. 
	Next, it is clear from the definition \eqref{DCE} that $F^{\Dir,g}$ is non-decreasing in $r$. 
Combining this with the previous result gives the second assertion of the Lemma. 

Finally we show that \eqref{LDhomog} holds.  Simply compute
\[
\LD(X + cI) = \inf_{v \in \Dir} \frac{1}{\abs{v}^2} v^\intercal (X + cI) v
= \inf_{v \in \Dir} \frac{1}{\abs{v}^2} v^\intercal X v + c 
= \LD(X) + c
\qedhere
\]
\end{proof}

Next we define viscosity solutions of~\eqref{obstacle}. 

\begin{definition}[Upper and Lower Semicontinuity]\label{def:envelope}
Let $u: \Rn \to \R$.   The \emph{upper and lower semicontinuous envelopes} of $u(x)$ are defined, respectively, by
\[ u^*(x) = \limsup_{y\to x}u(y), \]
\[ u_*(x) = \liminf_{y\to x}u(y). \]
The function $u$ is upper semicontinuous, $u \in USC(\Rn)$, if  $u = u^*$, and 
$u$  is lower semicontinuous, $u \in LSC(\Rn)$,  if  $u = u_*$.
\end{definition}

\begin{definition}
The function $u \in USC$ is a viscosity subsolution of $-\LD(D^2 u(x)) = 0$
if for every $C^2$ function $\phi$, whenever $x$ is a local  maximum of $u-\phi$ at $x$,
\bq\label{LD} 
-\LD(D^2\phi(x)) \leq 0.
\eq
The function $u \in USC$ is a viscosity subsolution of~\eqref{obstacle}
if for every $C^2$ function $\phi$, whenever $x$ is a local  maximum of $u-\phi$ at $x$,
\begin{equation}
	\label{obbSub}
u(x) - g(x) \leq 0 \quad \text{ and } \quad -\LD(D^2\phi(x))  \leq 0,  
\end{equation}
The lower semicontinuous function $u$ is a viscosity supersolution of~\eqref{obstacle} if  whenever  $\phi \in C^2$ touches $u$ from below at $x$ 
\begin{equation}\label{obsSuper}
u(x) - g(x) \geq 0 \quad \text{ or } \quad -\LD(D^2\phi(x))  \geq 0.	
\end{equation}
A function $u$ is a viscosity solution of \eqref{obstacle} if it is both a subsolution and a supersolution. 	
\end{definition}

\subsection{Comparison principle for the PDE}
Next we state a technical, but standard, viscosity solutions result, which gives the comparison principle in the case where we have strict sub and supersolutions.

\begin{theorem}[Comparison Principle for strict subsolutions \cite{CIL} ]\label{thm:Comp1}
Consider the Dirichlet problem  for the degenerate elliptic operator $F(M,r,x)$ on the bounded domain $\Omega$.  Let $u \in USC(\bar \Omega)$ be a viscosity subsolution and let $v \in LSC(\bar \Omega)$ be a viscosity supersolution.  Suppose further that for $\e > 0$,
\begin{align*}
	F(D^2u(x),u(x),x) + \e  &\le  0 \text{ in } \Omega
\\
 	F(D^2v(x),v(x),x) &\ge 0 \text{ in } \Omega
\end{align*}
holds in the viscosity sense.  Then the comparison principle holds:
\[
\text{ $u \le v$ on $\partial \Omega$ implies $u \le v$ on $\Omega$ }
\]
\end{theorem}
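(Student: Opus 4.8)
We sketch the argument, which is the standard doubling-of-variables proof of the comparison principle \cite[Theorem 3.3]{CIL}; the role usually played by strict properness or uniform ellipticity is here played by the strict inequality on the subsolution side. Suppose the conclusion fails, so that $\delta := \max_{\bar\Omega}(u-v) > 0$. Since $u-v \in USC(\bar\Omega)$ and $\bar\Omega$ is compact the maximum is attained, and since $u \le v$ on $\partial\Omega$ it must be attained at an interior point. For $\alpha > 0$ I would introduce the penalized function
\[
\Phi_\alpha(x,y) = u(x) - v(y) - \frac{\alpha}{2}\abs{x-y}^2
\]
on $\bar\Omega \times \bar\Omega$ and let $(x_\alpha, y_\alpha)$ be a maximizer. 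By the penalization lemma \cite[Lemma 3.1]{CIL}, as $\alpha \to \infty$ we have $\alpha\abs{x_\alpha - y_\alpha}^2 \to 0$, along a subsequence $x_\alpha, y_\alpha$ converge to a common interior maximum point $\hat x$ of $u-v$, and $\Phi_\alpha(x_\alpha,y_\alpha) \to \delta$. In particular, for $\alpha$ large, $x_\alpha$ and $y_\alpha$ are interior points and $u(x_\alpha) - v(y_\alpha) \ge \delta > 0$.

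Next I would apply the Theorem on Sums \cite[Theorem 3.2]{CIL} to $\Phi_\alpha$ at $(x_\alpha, y_\alpha)$: there exist $X, Y \in \mathcal{S}^{n}$ with $(\alpha(x_\alpha - y_\alpha), X) \in \bar J^{2,+}u(x_\alpha)$, $(\alpha(x_\alpha - y_\alpha), Y) \in \bar J^{2,-}v(y_\alpha)$, and
\[
\begin{pmatrix} X & 0 \\ 0 & -Y \end{pmatrix} \le 3\alpha \begin{pmatrix} I & -I \\ -I & I \end{pmatrix},
\]
which, tested against vectors of the form $(\xi,\xi)$, gives $X \preceq Y$. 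Feeding these jets into the viscosity inequalities, the strict subsolution hypothesis gives $F(X, u(x_\alpha), x_\alpha) \le -\e$ and the supersolution hypothesis gives $F(Y, v(y_\alpha), y_\alpha) \ge 0$. Using $Y \succeq X$ together with degenerate ellipticity (Definition~\ref{defn:degenerateelliptic}, Lemma~\ref{lem:degell}), and then monotonicity in $r$ together with $v(y_\alpha) \le u(x_\alpha)$, one obtains
\[
0 \le F(Y, v(y_\alpha), y_\alpha) \le F(X, v(y_\alpha), y_\alpha) \le F(X, u(x_\alpha), y_\alpha) = \big( F(X, u(x_\alpha), y_\alpha) - F(X, u(x_\alpha), x_\alpha) \big) + F(X, u(x_\alpha), x_\alpha).
\]
For $F = F^{\Dir,g}$ the bracket is controlled directly: $x$ enters $F^{\Dir,g}$ only through $g(x)$ and $t \mapsto \max\{t,s\}$ is $1$-Lipschitz, so the bracket is bounded by $\abs{g(x_\alpha) - g(y_\alpha)}$, which tends to $0$ by continuity of $g$ since $x_\alpha, y_\alpha \to \hat x$. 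Combined with $F(X, u(x_\alpha), x_\alpha) \le -\e$, this yields $0 \le o(1) - \e$, a contradiction for $\alpha$ large, and hence $u \le v$ on $\Omega$.

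The step I expect to be the main obstacle is the last estimate on $F(X, u(x_\alpha), y_\alpha) - F(X, u(x_\alpha), x_\alpha)$: the matrices produced by the Theorem on Sums may have norm growing like $\alpha$, so for a \emph{general} degenerate elliptic $F$ one would need a structure condition relating the $x$-modulus of $F$ to $\alpha\abs{x-y}^2$, as in \cite[(3.14)]{CIL}. In the present setting this difficulty disappears, because the $x$-dependence of $F^{\Dir,g}$ is decoupled from the matrix argument and reduces to the continuity of $g$. This is precisely why, on the subsolution side, the strict inequality $F + \e \le 0$ is enough and the strict properness required by \cite[Theorem 3.3]{CIL} is not needed.
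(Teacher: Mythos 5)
Your proof is correct, but it is a genuinely different route from what the paper does. The paper's own argument for \autoref{thm:Comp1} is deliberately only \emph{formal}: it evaluates $u-v$ at a classical interior maximum, uses $D^2u \preceq D^2v$, $u\ge v$ there together with Definition~\ref{defn:degenerateelliptic} to contradict the strict inequality, and explicitly states that this is rigorous only when one of $u,v$ is $C^2$, deferring the general viscosity case to the citation of \cite[Section 5.C]{CIL}. You instead carry out the full rigorous viscosity argument: doubling of variables, the penalization lemma, Ishii's Theorem on Sums to produce $X\preceq Y$ in the closed semijets, and then the same ellipticity/monotonicity chain. The price of that rigor is exactly the point you flag yourself: the term $F(X,u(x_\alpha),y_\alpha)-F(X,u(x_\alpha),x_\alpha)$ cannot be controlled for an \emph{arbitrary} degenerate elliptic $F(M,r,x)$, since $\lVert X\rVert$ may grow like $\alpha$; one needs a structure condition of the type \cite[(3.14)]{CIL}, so your argument proves the statement only for operators whose $x$-dependence has a modulus independent of the matrix argument. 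For $F^{\Dir,g}$ this is automatic ($x$ enters only through the continuous $g$, and the max is $1$-Lipschitz), and since \autoref{thm:Comp2} applies \autoref{thm:Comp1} only to $F^{\Dir,g}$, your proof fully covers what the paper needs --- indeed it is more self-contained than the paper's, which buys transparency about the role of ellipticity but leaves the genuinely viscosity-theoretic content to the reference, whereas yours makes explicit the (mild but necessary) structural hypothesis hidden in the theorem's general phrasing.
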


\begin{remark}
In \cite[Section 5.C]{CIL}, it is explained how the main comparison theorem, \cite[Theorem 3.3]{CIL},
can be applied when it is possible to perturb a subsolution to a strict subsolution.   This version of the theorem is what we state in \autoref{thm:Comp1}.  This result was used in \cite[Theorem 3.1]{Bardi2006709} and  \cite{bardi2013comparison} to prove a comparison principle. 	
\end{remark}

We provide a formal proof of \autoref{thm:Comp1}, which can be made rigorous in the case that one of $u$ or $v$ is $C^2$. It is included to illustrate the connection between the comparison principle and Definition~\ref{defn:degenerateelliptic}.

\begin{proof}[Formal proof of \autoref{thm:Comp1}]
Suppose $u$ and $v$ are $C^2$ functions, and $u \not \le v$ in $\Omega$.  Then $\max_{x\in \Omega} \{u(x) - v(x) \} > 0$.  Let $x \in \argmax_{x\in \Omega} \{ u(x) - v(x) \}$.  Then $x$ is in the interior of $\Omega$, since we assumed $u \le v$ on $\partial \Omega$.

Since $x$ is a positive local maximum of $u-v$, we have
\[
u(x) \ge v(x),
\quad
\nabla u(x) = \nabla v(x), 
\quad
D^2u(x) 
\preceq
%\succeq
D^2 v(x)
\]
Using the inequalities above in Lemma~\ref{lem:degell}, we have
\[
F(D^2u(x),u(x),x) \ge   F(D^2v(x),v(x),x).
\]
This last inequality contradicts the strict inequality in the assumption of the Theorem.  So $u \le v$ in $\Omega$.
\end{proof}

In the next result, we show how to perturb a subsolution to obtain a strict subsolution, allowing us to appeal to \autoref{thm:Comp1} to obtain the comparison result.

\begin{theorem}[Comparison Principle]\label{thm:Comp2}
Consider the Dirichlet problem \eqref{obstacle}, \eqref{BCD} for the $\Dir$-convex envelope on the bounded domain $\Omega$. 
Assume that \eqref{g_assumption} holds.  Let $u \in USC( \Omega)$ be a viscosity subsolution of \eqref{obstacle} and let $v \in LSC(\Omega)$ be a viscosity supersolution of \eqref{obstacle}.
Then the comparison principle holds:
\[
\text{ $u \le v$ on $\partial \Omega$ implies $u \le v$ on $\Omega$. }
\]
\end{theorem}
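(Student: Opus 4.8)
The plan is to perturb the subsolution $u$ into a \emph{strict} subsolution of \eqref{obstacle} and then invoke \autoref{thm:Comp1}. The delicate point is that the perturbation must create a uniform amount of slack in \emph{both} branches of the maximum defining $F^{\Dir,g}$ at once: in the obstacle branch $r-g(x)$ it must push $u$ strictly below $g$, while in the $-\LD$ branch it must make $u$ strictly ``more $\Dir$-convex''. I would reconcile these by subtracting a single smooth function that is simultaneously $\Dir$-concave and bounded below away from zero.

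As a preliminary, I would record that $u \le g$ throughout $\Omega$. Indeed, if $u(x_0) > g(x_0)$ for some $x_0 \in \Omega$, fix $r>0$ so small that $\bar B_r(x_0) \subset \Omega$ and, by continuity of $g$, $g < u(x_0)$ on $\bar B_r(x_0)$; then for $C$ large enough the USC function $x \mapsto u(x) - C\abs{x-x_0}^2$ attains its maximum over $\bar B_r(x_0)$ at an interior point $x_1$ (the boundary values being strictly below the value $u(x_0)$ at the centre), where $u(x_1)\ge u(x_0) > g(x_1)$. Testing the subsolution condition \eqref{obbSub} with $\phi(x) = C\abs{x-x_0}^2$ at $x_1$ then yields $u(x_1) - g(x_1) \le 0$, a contradiction.

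Now set $\psi(x) := C_\Omega - \tfrac12\abs{x}^2$ with $C_\Omega := 1 + \tfrac12\max_{x\in\bar\Omega}\abs{x}^2$, so that $\psi \ge 1$ on $\bar\Omega$ and $D^2\psi \equiv -I$, and for $\e>0$ let $u_\e := u - \e\psi$. I claim $u_\e$ satisfies $F^{\Dir,g}(D^2 u_\e,u_\e,x) + \e \le 0$ in $\Omega$ in the viscosity sense. If $\phi\in C^2$ and $u_\e - \phi$ has a local maximum at $x\in\Omega$, then $u - (\phi + \e\psi)$ has a local maximum at $x$ with $\phi+\e\psi\in C^2$, so \eqref{obbSub} gives both $u(x)\le g(x)$ and $-\LD\bigl(D^2\phi(x) + \e D^2\psi(x)\bigr)\le 0$. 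Since $D^2\psi=-I$, the identity \eqref{LDhomog} turns the second inequality into $\LD(D^2\phi(x)) \ge \e$, i.e. $-\LD(D^2\phi(x)) \le -\e$; and from $u\le g$ and $\psi\ge1$ we get $u_\e(x) - g(x) = u(x) - g(x) - \e\psi(x) \le -\e$. Hence $F^{\Dir,g}(D^2\phi(x),u_\e(x),x) = \max\{u_\e(x)-g(x),\,-\LD(D^2\phi(x))\} \le -\e$, proving the claim. Also $\psi>0$ gives $u_\e < u \le v$ on $\partial\Omega$.

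It then remains to apply \autoref{thm:Comp1} to the strict subsolution $u_\e$ and the supersolution $v$: from $u_\e \le v$ on $\partial\Omega$ we obtain $u - \e\psi = u_\e \le v$ in $\Omega$, and letting $\e \to 0^+$ (with $\psi$ bounded on $\bar\Omega$) gives $u \le v$ in $\Omega$. The main obstacle is the verification in the previous paragraph that $u_\e$ is a strict subsolution of the \emph{full} operator: the construction is rigid precisely because the $-\LD$ branch forces the perturbation to be $\Dir$-concave with $D^2\psi$ a negative multiple of $I$ (so that \eqref{LDhomog} converts it into a uniform gain of $\e$), whereas the obstacle branch forces $\psi$ to be uniformly positive, and it is the interplay with the elementary bound $u\le g$ that makes a single quadratic $\psi$ serve both purposes.
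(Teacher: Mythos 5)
Your proof is correct and follows essentially the same route as the paper: perturbing by $\e$ times a fixed quadratic (your $-\psi$ is exactly the paper's $\phi$) so that \eqref{LDhomog} yields a uniform gain of $\e$ in the $-\LD$ branch while the lower bound on the quadratic gives the gain in the obstacle branch, then invoking \autoref{thm:Comp1} and sending $\e\to 0$. Your preliminary step establishing $u\le g$ throughout $\Omega$ is redundant (the inequality $u(x)\le g(x)$ at the touching point already follows from \eqref{obbSub} with the test function $\phi+\e\psi$, as you in fact use), but it is harmless and correctly argued.
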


\begin{proof}
We will show that for small enough $\e > 0$  we can perturb $u$ to a function $u_\e$ so that
\begin{align*}
F(D^2u_\e(x), u_\e(x), x) + \e &\le 0, 
\end{align*}
holds in the viscosity sense for all $x$ in $\Omega$.  Then, since $
F(D^2v(x),v(x),x) \ge 0$, we can  apply \autoref{thm:Comp1} to $u_\e, v$ to obtain $u_\e  \le v$ in  $\Omega$.   Taking $\e \to 0$ gives the desired result.

Set $R = \max_{x\in \Omega} \abs{x}$.  Consider the function 
\[
\phi(x) = \frac{\abs{x}^2 - R^2 - 2}{2}.
\]
Then $\phi(x) \le -1$ in $\Omega$, and 
\[
\grad \phi(x) =  x,
\qquad
D^2\phi(x) =  I,
\]
the identity matrix. 
Given the viscosity subsolution $u\in USC(\bar \Omega)$, let 
\[
u_\e(x) = u(x) + \e \, \phi(x)
\]
Then  
\begin{equation}\label{eq11}\tag{i}
u_\e-g \le u - g - \e. 	
\end{equation}
%Let $\psi \in C^2$ and let $x$ be any local maximum of $u_\e - \psi$. 
Since $-\LD(D^2 u(x)) \leq 0$ in $\Omega$ holds in the viscosity sense, 
we also have, using Lemma~\ref{lem:degell}, that
\begin{equation}\label{eq21}\tag{ii}
-\LD(D^2u_\e(x)) 
\leq - \e 
\end{equation}
holds in the viscosity sense in $\Omega$.  Together, \eqref{eq11},\eqref{eq21} imply
\begin{align*}
F(D^2u_\e, u_\e(x), x) &= \max \left \{ u_\e(x) - g(x), -\LD(D^2u_\e(x))  \right \} 
\\
& \le \max  \left \{ u(x) - g(x) - \e, -\LD(D^2 u(x)) - \e \right \}
\\ 
&\le F(D^2u,u,x) - \e.	
%\qedhere
\end{align*}
So $F[u_\e] + \e \le F[u] \le 0$ as desired. 
\end{proof}

\subsection{Existence of solutions by Perron's method}

In this section we that viscosity solutions of  \eqref{obstacle} \eqref{BCD} are indeed the $\Dir$-convex envelope of the function $g(x)$, assuming \eqref{g_assumption} holds.

We state a lemma, which generalizes a consistency result for the convex envelope which was first obtained in \cite[Lemma 1]{alvarez1997convex}.  That result was used in \cite{ObermanConvexEnvelope} to derive the obstacle problem for the convex envelope.  
\begin{lemma}\label{thm:char}
The  continuous function 
$u: \R^n \to \R$ is $\Dir$-convex if and only if it is a viscosity solution of $-\LD(D^2 u(x)) \leq 0$.
\end{lemma}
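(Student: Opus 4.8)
The plan is to prove both implications of the characterization, treating the directional setting as a direction-by-direction reduction to the classical one-dimensional fact that a function is convex along a line iff it has nonnegative second derivative in the viscosity sense along that line.

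First I would prove the forward implication: if $u$ is $\Dir$-convex then it is a viscosity subsolution of $-\LD(D^2u)\le 0$. Suppose $\phi\in C^2$ and $u-\phi$ has a local maximum at $x$. Fix any $v\in\Dir$ and consider the one-dimensional slices $t\mapsto u(x+tv)$ and $t\mapsto \phi(x+tv)$. By \eqref{d.convex}, $u$ is convex along the direction $v$ (the restriction of a $\Dir$-convex function to a line in a direction of $\Dir$ is an ordinary convex function of one variable), so $t\mapsto u(x+tv)$ is convex. Since $t\mapsto \phi(x+tv) - u(x+tv)$ has a local minimum at $t=0$ and $u(x+\cdot\,v)$ is convex, a standard one-variable argument (comparing $\phi$ at $t=0$ with the chord of the convex function through $t=\pm s$) forces $\frac{d^2}{dt^2}\phi(x+tv)\big|_{t=0} = v^\intercal D^2\phi(x) v \ge 0$. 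Dividing by $|v|^2$ and taking the infimum over $v\in\Dir$ gives $\LD(D^2\phi(x))\ge 0$, i.e. $-\LD(D^2\phi(x))\le 0$, which is exactly \eqref{LD}.

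Next I would prove the converse: if $u$ is continuous and a viscosity subsolution of $-\LD(D^2u)\le 0$, then $u$ is $\Dir$-convex. This is the place I expect the real work, and I would argue by contradiction. If $u$ fails \eqref{d.convex}, then there exist $x,y$ with $x-y\in\Dir$ and $\lambda_0\in(0,1)$ such that $u(\lambda_0 x+(1-\lambda_0)y) > \lambda_0 u(x)+(1-\lambda_0)u(y)$. Restricting attention to the segment, write $w(t) = u(y + t(x-y))$ for $t\in[0,1]$; then $w$ is continuous and $w(\lambda_0) > (1-\lambda_0)w(0)+\lambda_0 w(1)$, so the continuous function $w(t) - \big[(1-t)w(0)+tw(1)\big]$ attains a strictly positive maximum at some interior $t^*\in(0,1)$. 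The standard trick is then to subtract a small strictly concave quadratic bump in the $v=(x-y)$ direction (plus a quadratic that is large in the orthogonal directions, or simply use a test function of the form $\phi(z) = \ell(z) + \frac{\kappa}{2}\,\text{(affine-in-}v\text{)}^2 - \frac{\delta}{2}|z - z^*|^2$ suitably arranged) so as to produce a genuine local maximum of $u-\phi$ at the point $z^* = y + t^*(x-y)$ at which $v^\intercal D^2\phi(z^*)v < 0$. That gives $\LD(D^2\phi(z^*)) < 0$, contradicting the subsolution property. The one subtlety to handle carefully is ensuring the maximum of $u-\phi$ is at an interior point of the segment and genuinely local in $\Rn$ (not just along the line); this is arranged by adding the $-\frac{\delta}{2}|z-z^*|^2$ term so that the maximum along the segment is unaffected to leading order while movement off the segment is penalized, exactly as in the convex-envelope consistency lemma of \cite{alvarez1997convex}.

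The main obstacle is this last construction of the test function: one must simultaneously (i) keep $u-\phi$ with an interior maximum along the segment, (ii) make that maximum a strict local maximum in all of $\Rn$, and (iii) arrange $v^\intercal D^2\phi\, v<0$ at that point while $\phi$ is $C^2$. I would do this by first passing to the affine function $A(z)$ agreeing with $u$ at the two segment endpoints, noting $u - A$ has a positive interior maximum $m>0$ along the segment, then setting $\phi(z) = A(z) + \frac{m}{2} - \eta\,\big(\text{quadratic vanishing and concave along }v\text{ near }t^*\big) + C|P^\perp(z-z^*)|^2$ for small $\eta>0$ and large $C$, where $P^\perp$ projects onto the orthogonal complement of $v$; choosing $\eta$ small enough preserves the strict interior maximum along the line, and the $C|P^\perp(\cdot)|^2$ term confines the maximum to the line. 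Since everything else reduces to one-variable calculus and the definition of viscosity subsolution, this is the only step requiring care, and it closes the argument.
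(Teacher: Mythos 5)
The paper itself omits this proof, deferring to \cite[Lemma 1]{alvarez1997convex} with ``the main modification being a restriction to directions in $\Dir$,'' and your overall plan is exactly that intended adaptation: slice along a fixed $v\in\Dir$ for the easy implication, and run a chord/test-function contradiction for the converse. Your forward implication is correct as written: at a local maximum of $u-\phi$, comparing $\phi(x+tv)$ with the chord of the convex slice $t\mapsto u(x+tv)$ gives $v^\intercal D^2\phi(x)\,v\ge 0$ for every $v\in\Dir$, hence $-\LD(D^2\phi(x))\le 0$. (One sentence worth adding in a write-up: passing from \eqref{d.convex} to ``convex along every line with direction in $\Dir$'' uses that $\Dir$ is a cone of directions, as it is for the rank one set; the chord condition at a single fixed step length does not by itself give convexity along the line.)

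The converse has the right skeleton, but the test function you actually write down would not produce the contradiction, for two sign reasons. First, adding $-\tfrac{\delta}{2}|z-z^*|^2$ to $\phi$ does not penalize leaving the segment: it adds $+\tfrac{\delta}{2}|z-z^*|^2$ to $u-\phi$ and pushes the maximum outward; the confinement must come from the $+C|P^\perp(z-z^*)|^2$ term you introduce at the end. Second, in your final formula, subtracting a quadratic that is concave along $v$ makes $\phi$ \emph{convex} along $v$, so at a maximum of $u-\phi$ the subsolution inequality $-\LD(D^2\phi)\le 0$ is simply satisfied and nothing is contradicted. What is needed is constant \emph{negative} curvature of $\phi$ along $v$ and large positive transverse curvature: writing $g=u-A$, $z=y+s\,v+w$ with $w\perp v$, take $\phi(z)=A(z)+K\,s(1-s)+C|w|^2$ with $0<K<2m$. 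Then $v^\intercal D^2\phi\,v=-2K<0$ at every point, and $u-\phi=g-Ks(1-s)-C|w|^2$ is at least $m-K/4>m/2$ at $z^*$, at most $m/2$ on the end caps of a thin cylinder around the segment (continuity of $g$ at the endpoints, $\delta$ small), and at most $\max g-C\delta^2<m/2$ on the lateral boundary for $C$ large. Hence the maximum over the closed cylinder is attained at an interior point, which is a local maximum of $u-\phi$ in $\Rn$. You should also drop the requirement that this maximum sit at $z^*$ or even on the segment: for merely continuous $u$ the transverse penalization does not force that, and it is not needed, since the $v$-curvature of $\phi$ is negative everywhere, so \eqref{LD} is violated at whichever interior point the maximum occurs. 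With these corrections your argument closes and coincides with the consistency proof the paper has in mind.
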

\begin{proof}
	We omit the proof, since it is very similar to previous results; the main modification being a restriction to directions in $\Dir$. 
\end{proof}

Next we paraphrase Perron's method. 
\begin{prop}[Perron's method {\cite[Theorem 4.1]{CIL}}]
Suppose the comparison principle holds for \eqref{obstacle} \eqref{BCD}.  Suppose also that there are a continuous subsolution $u_1$ and a continuous supersolution $u_2$ that satisfy \eqref{BCD}.   Then there exists a solution, $w$, of \eqref{obstacle} \eqref{BCD} which is given by
\begin{equation}\label{Perron}
w(x) = \sup \left \{ w(x) \mid u_1 \leq w \leq u_2 \text{ and $w$ is a subsolution of \eqref{obstacle}\eqref{BCD} } \right \}	
\end{equation}
In particular $w = g$ on $\partial \Omega$. 
\end{prop}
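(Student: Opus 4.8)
The plan is to follow the classical Perron argument of \cite[Section~4]{CIL}, adapted to the operator $F^{\Dir,g}$. First I would record the basic structure. Set
\[
\mathcal{S} = \{\, v \in USC(\bar\Omega) : u_1 \le v \le u_2 \text{ on } \bar\Omega,\ v \text{ is a viscosity subsolution of } \eqref{obstacle} \,\},
\]
which is nonempty since $u_1 \in \mathcal{S}$, and let $w(x) = \sup_{v\in\mathcal{S}} v(x)$. Then $u_1 \le w \le u_2$, so $w$ is locally bounded; and since $u_1,u_2$ are continuous with $u_1 = u_2 = g$ on $\partial\Omega$, for $x_0\in\partial\Omega$ we get $g(x_0) = (u_1)_*(x_0) \le w_*(x_0) \le w^*(x_0) \le (u_2)^*(x_0) = g(x_0)$, hence $w^*(x_0) = w_*(x_0) = g(x_0)$. (Applying the comparison principle, \autoref{thm:Comp2}, to $u_1$ and $u_2$ also shows $u_1 \le u_2$, so $\mathcal{S}$ is closed under finite maxima — convenient though not strictly needed below.)

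The first step is to show that \emph{$w^*$ is a viscosity subsolution of \eqref{obstacle}}. Here I would invoke the standard stability lemma \cite[Lemma~4.2]{CIL}: the upper semicontinuous envelope of the supremum of a locally bounded family of viscosity subsolutions of a degenerate elliptic equation is again a subsolution. This applies to \eqref{obstacle} because $F^{\Dir,g}$ is degenerate elliptic (\autoref{lem:degell}) and is continuous in $(M,r,x)$, since $M \mapsto \LD(M)$ is continuous on $\mathcal{S}^n$ (an infimum of linear functionals over the fixed compact direction set) and $g$ is continuous. Moreover $u_1 \le w^* \le u_2$, the last inequality because $u_2$ is continuous, hence upper semicontinuous.

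The second, and main, step is to show that \emph{$w_*$ is a viscosity supersolution of \eqref{obstacle}}, which I would prove by contradiction via the usual ``bump'' construction \cite[Lemma~4.4]{CIL}. Suppose $w_*$ fails the supersolution property at some $x_0$; since $w_* = g$ on $\partial\Omega$ makes the alternative $w_*-g\ge 0$ hold there, necessarily $x_0 \in \Omega$, and there is $\phi \in C^2$ so that $w_* - \phi$ has a strict local minimum, equal to $0$, at $x_0$, with $F^{\Dir,g}(D^2\phi(x_0),w_*(x_0),x_0) < 0$, i.e. $w_*(x_0) - g(x_0) < 0$ and $-\LD(D^2\phi(x_0)) < 0$. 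The key sub-point is that $w_*(x_0) < u_2(x_0)$: otherwise $\phi \le w_* \le w^* \le u_2$ near $x_0$ with equality at $x_0$, so $\phi$ touches the supersolution $u_2$ from below at $x_0$, forcing $F^{\Dir,g}(D^2\phi(x_0),w_*(x_0),x_0) \ge 0$, a contradiction. Using continuity of $F^{\Dir,g}$, $g$, $u_2$ and $D^2\phi$, I would then fix small $r,\e>0$ so that on $B_r(x_0)$ the function $\phi+\e$ is a classical (hence viscosity) subsolution of \eqref{obstacle}, satisfies $\phi+\e < u_2$, and satisfies $\phi+\e < w_*$ on a neighborhood of $\partial B_r(x_0)$ (the last point using the strict minimum at $x_0$ together with lower semicontinuity of $w_*$). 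Define $U = \max\{w,\phi+\e\}$ on $B_r(x_0)$ and $U = w$ elsewhere; the two formulas agree near $\partial B_r(x_0)$, so $U$ is well defined, $u_1 \le U \le u_2$, and $U^*$ is a viscosity subsolution (locally a maximum of two subsolutions), hence $U^* \in \mathcal{S}$. Choosing $x_n \to x_0$ with $w(x_n) \to w_*(x_0) = \phi(x_0)$ gives $U(x_n) = \phi(x_n)+\e > w(x_n)$ for large $n$, so $U^*(x_n) \ge U(x_n) > w(x_n)$, while $U^* \in \mathcal{S}$ forces $U^* \le w$ — a contradiction. Therefore $w_*$ is a supersolution.

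Finally, $w^*$ is a subsolution, $w_*$ is a supersolution, and $w^* = w_* = g$ on $\partial\Omega$, so the comparison principle \autoref{thm:Comp2} yields $w^* \le w_*$ in $\Omega$; combined with the trivial $w_* \le w \le w^*$ this gives $w = w^* = w_*$, which is therefore continuous and is both a sub- and a supersolution, i.e.\ a viscosity solution of \eqref{obstacle} with $w = g$ on $\partial\Omega$, as asserted in \eqref{Perron}. The hard part is the second step: one must verify that adding a small constant to a $\phi$ with $F^{\Dir,g}(D^2\phi,\phi,\cdot)<0$ near $x_0$ preserves the (viscosity) subsolution property and that the $\max$-gluing respects the two-alternative structure of $F^{\Dir,g}$ near $\partial B_r(x_0)$; both are routine given the degenerate ellipticity and continuity of $F^{\Dir,g}$ established in \autoref{lem:degell}, but this is where the specific form of the operator enters.
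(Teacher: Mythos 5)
Your proposal is correct. The paper gives no proof of this proposition beyond citing \cite[Theorem 4.1]{CIL}, and your argument is precisely the standard Perron construction from that reference (sup of subsolutions, stability of the upper envelope, the bump lemma for $w_*$, then comparison to glue $w^*=w_*$), correctly specialized to the obstacle operator $F^{\Dir,g}$ using its degenerate ellipticity and the continuity of $\LD$ and $g$.
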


\begin{theorem}\label{thm:PDE} 
Suppose $g$ satisfies \eqref{g_assumption}.  
Then the unique viscosity solution of~\eqref{obstacle}\eqref{BCD} is the the $\Dir$-convex envelope of the function $g$.  Furthermore, it is continuous up to the boundary and satisfies \eqref{BCD}.
\end{theorem}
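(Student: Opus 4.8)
The plan is to obtain uniqueness from the comparison principle, existence from Perron's method using $g_0$ and $g$ as barriers (which will also deliver continuity up to the boundary), and then to identify the resulting solution with the $\Dir$-convex envelope $g^\Dir$ by two one-sided comparisons. Uniqueness is immediate from \autoref{thm:Comp2}, since two solutions are mutually a sub- and a supersolution and agree on $\partial\Omega$. For existence I would invoke Perron's method with $u_1=g_0$ and $u_2=g$: here $g$ is a continuous supersolution because $g(x)-g(x)=0\ge 0$ pointwise, and $g_0$ is a continuous subsolution because it is $\Dir$-convex---hence a viscosity subsolution of $-\LD(D^2u)\le 0$ by \autoref{thm:char}---while $g_0\le g$ in $\Omega$ gives $g_0-g\le 0$. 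By \eqref{g_assumption} one has $\partial\Omega\subset D\setminus\Omega$, so $g_0=g$ on $\partial\Omega$ (both barriers satisfy \eqref{BCD}) and $g_0\le g$ on $\bar\Omega$. Perron's method then yields a solution $w$ with $g_0\le w\le g$ on $\bar\Omega$; it is continuous in $\Omega$ (its semicontinuous envelopes are a sub- and a supersolution to which \autoref{thm:Comp2} applies), and, since $g_0$ and $g$ are continuous and coincide on $\partial\Omega$, the sandwich forces $w$ to be continuous up to $\partial\Omega$ with $w=g$ there.

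\emph{The inequality $g^\Dir\le w$.} For this I would take an arbitrary competitor $v$ in \eqref{DCEdefn}, i.e.\ a continuous $\Dir$-convex function with $v\le g$. By \autoref{thm:char}, $v$ is a viscosity subsolution of $-\LD(D^2u)\le 0$, and together with $v-g\le 0$ this makes $v$ a subsolution of \eqref{obstacle}; since $v\le g=w$ on $\partial\Omega$, \autoref{thm:Comp2} gives $v\le w$ in $\Omega$. Taking the supremum over all such $v$ yields $g^\Dir\le w$ in $\Omega$.

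\emph{The inequality $w\le g^\Dir$.} As a solution of \eqref{obstacle}, $w$ is in particular a subsolution of $-\LD(D^2u)\le 0$, so by the local form of \autoref{thm:char} it is $\Dir$-convex in $\Omega$. I would then set $W=w$ on $\bar\Omega$ and $W=g_0$ on $D\setminus\Omega$; because $w=g=g_0$ on $\partial\Omega$ this is well defined and continuous, and $W\le g$ everywhere (using $w\le g$ on $\bar\Omega$ and $g_0=g$ off $\Omega$). The key claim is that $W$ is $\Dir$-convex: restricted to a segment in a direction $d\in\Dir$, the restriction is continuous, convex on each subinterval whose image meets $\Omega$ (by $\Dir$-convexity of $w$) and on the complementary closed set (by $\Dir$-convexity of $g_0$), dominates the convex restriction of $g_0$, and agrees with that restriction off $\Omega$ and on $\partial\Omega$; a short one-variable argument then makes the restriction convex on the whole segment. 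Thus $W$ is admissible in \eqref{DCEdefn}, so $W\le g^\Dir$, whence $w=W\le g^\Dir$ in $\Omega$. Combining the two inequalities gives $w=g^\Dir$, and the properties of $w$ established in the first paragraph give continuity up to the boundary and \eqref{BCD}.

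\emph{Main obstacle.} I expect the gluing step in the previous paragraph to be the only real difficulty: showing that the piecewise-convex function $W$ develops no downward kink across $\partial\Omega$, which hinges on the lower barrier $w\ge g_0$ in $\Omega$ (from the Perron sandwich) and on $w=g_0$ on $\partial\Omega$. The remainder is bookkeeping---checking that the competitors $v$, the extension $W$, and the Perron solution $w$ lie in the semicontinuity classes required by \autoref{thm:Comp2} (they are continuous), and, unless $\Dir$ is assumed to be a cone, restricting the one-dimensional arguments to pairs of points whose difference lies in $\Dir$, which is exactly what \eqref{d.convex} supplies at the relevant endpoints and is automatic for $\Dir=\Rn\setminus\{0\}$ and for the rank-one directions.
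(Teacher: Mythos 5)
Your proposal is correct and follows the same route the paper takes: Perron's method with the barriers $u_1=g_0$ (a subsolution, by \autoref{thm:char} and $g_0\le g$) and $u_2=g$ (a trivial supersolution), uniqueness and boundary continuity from \autoref{thm:Comp2} and the sandwich $g_0\le w\le g$ together with $g_0=g$ on $\partial\Omega$. Where you go further than the paper is in the identification $w=g^\Dir$. The paper disposes of it in one sentence by asserting that the Perron class and the competitor class in \eqref{DCEdefn} coincide after restriction to $\Omega$; you instead split it into two comparisons, and for $w\le g^\Dir$ you actually build the global competitor $W$ by gluing $w$ to $g_0$ across $\partial\Omega$ and check one-dimensional convexity on segments. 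That is the only nontrivial content the paper's terse phrasing hides, and your sketch of the one-variable argument (using $W\ge g_0$, $W=g_0$ off $\Omega$, and convexity of $g_0$ along directions in $\Dir$) is sound: at a putative interior maximizer of $W$ minus a chord, if the point is outside $\Omega$ one uses convexity of $g_0$, and if it is inside one uses convexity of $W$ on that component to push the maximizer to the boundary of the component, reducing to the first case. Two small points worth tightening: define $W=g_0$ on all of $\R^n\setminus\Omega$ (not only $D\setminus\Omega$) so that $W$ is admissible in \eqref{DCEdefn}; and, as you note, the segment argument should be stated for pairs with difference in $\Dir$, which is automatic for the rank-one and full direction sets since $\LD$ is scale-invariant.
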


\begin{proof}
By the definition of viscosity solutions, \eqref{obsSuper}, $g$ is a supersolution of \eqref{obstacle}.  
Since $g_0$ is $\Dir$-convex, by Lemma~\ref{thm:char}, $g_0$ is a viscosity supersolution of \eqref{LD}.
By assumption \eqref{g_assumption} $g_0 \leq g$.  Together, these last two assertions show that $g_0$ is a viscosity subsolution of \eqref{obstacle}.   Also by \eqref{g_assumption}, $g = g_0$ on $\partial \Omega$. 
So we have a sub and super solution which satisfy \eqref{BCD}.

By Theorem~\ref{thm:Comp2}, the comparison principle holds for \eqref{obstacle} \eqref{BCD}. 
So we can apply \eqref{Perron} to obtain the solution $w$ which satisfies \eqref{BCD}.

Next we establish that the solution $w$ is the $\Dir$-convex envelope of $g$.
By Lemma~\ref{thm:char}, subsolutions of~\eqref{obstacle} consist precisely of those $\Dir$-convex functions majorized by~$g$.  

According to  \eqref{DCEdefn}, the $\Dir$-convex envelope, $g^\Dir$, is defined for functions defined on all of $\Rn$. However, by assumption \eqref{g_assumption},  $g^\Dir = g_0$ outside of $\Omega$.  So we can restrict to $\Omega$ and we find that the class of functions in Perron's method \eqref{Perron} is the same as in the definition \eqref{DCEdefn}, so  $w = g^\Dir$.   
\end{proof}

\section{An elliptic finite difference method for the PDE}
In this section we present the numerical method for computing the $\Dir$-convex envelope. We show that there exist unique solutions of the finite difference equations, using discrete versions of the comparison principle, and a fixed point method.  We obtain the formal accuracy of the scheme, and prove convergence.

\begin{remark}[Non grid-aligned directions]
The directional finite difference operator along grid directions given an elliptic and hence convergent method to enforce convexity along grid directions.  We will approximate the direction set $\Dir$ by a directions available on the grid, denoting these directions by $\Dir^W$.  For consistency,  we will need to send both $\dx \to 0$ and $\Dir^W \to \Dir$.  	
 The method of \cite{Dolzmann} also enforced directional convexity on a large but restricted direction set.   The methods of \cite{ObermanCENumerics} and \cite{ObermanEigenvalues}  also used wide stencil to approximate the convex envelope operator using grid directions.   

For non grid-aligned directions, the corresponding directional finite difference operator is not monotone.  In fact, there is no monotone, second order accurate method for approximating the second derivative in a non-grid aligned direction~\cite{MotzkinWasow}.   However, it may be possible to use a filtered scheme~\cite{froese2013convergent} to give a convergent method for non-grid aligned directions. This could be done by writing the vector $d$ as the sum of:  (i) a convex combination of nearby grid directions, which is elliptic, and (ii) a quadratic correction term, which is not elliptic.  By  filtering the second term, we could obtain a convergent scheme for a larger direction set.   However, we limit ourselves to the simpler discretization for the present.
\end{remark}

\subsection{Wide stencil finite differences for the $\Dir$-convex envelope}
As before, we consider $\Omega \subset D = [ -1,1]^n$. 
In order to have second order accurate finite difference operators, we use a uniform grid of spacing, $\dx$,  in $D$
\begin{equation}\label{Gdefn}
\Grd = \{ x \in h \Zb^n  ~\mid ~  x\in D \}, 
\qquad
\Grd_V = \Omega \cap \Grd,
\qquad
\partial \Grd = \Grd \setminus \Grd_V.
\end{equation}

\begin{remark}
	Notice that the boundary grid points, $\partial\Grd$ may contain multiple grid points in each grid direction.\end{remark}
	
\begin{definition}[Finite difference equation]
Let $C(\Grd)$ denote the set of grid functions, $u: \Grd \to \R$. 
A finite difference operator is  map 
 $F^\dx: C(\Grd) \to C(\Grd)$, which has the following form,
\begin{equation}\label{fd.general}
F^\dx[u](x) = F^\dx(x, u(x), u(x) - u(\cdot) ), 	
\end{equation}
where $u(\cdot)$ indicates the values of the grid function $u$.
It has \emph{stencil width} $W$ if 
 $F^\dx(x, u(x), u(x) - u(\cdot) )$ depends only on values $u(y)$ for $\norm{y-x}_\infty/\dx  \le W$.  A \emph{solution} of the finite difference scheme is a grid function which satisfies the equation $F^\dx[u](x) = 0$ for all $x \in \Grd$.
\end{definition}

\begin{example}[Centred second differences]
The centred second difference operator is given by
\[
D^h_{xx}[u](x)  = \frac{1}{\dx^2} 
\left( u(x+h) - 2u(x) + u(x-h)\right).
\]
The operator is consistent, and second order accurate: for smooth functions $u$,  $D^h_{xx}[u](x) = u_{xx}(x) + \bO(h^2)$. 
\end{example}

The centred second difference  operator is naturally extended to second derivatives in a grid direction. 

\begin{definition}[Grid Direction Set]\label{defn:gridDir}
We call $v \in \mathbb{Z}^n$ a grid vector, and define its width to be  $W = \norm{v}_\infty$.  If $\Dir^W$ is a direction set consisting of grid vectors, then the width of the direction set is the maximum width of any grid vector in the set. 
\end{definition}

\begin{definition}[Grid directional second derivatives]
 Suppose that $x \pm \dx v \in \Grd$ for all $x \in \Grd_V$. Define  
the finite difference operator $D^\dx_{vv}: C(\Grd) \to C(\Grd_V)$  by
\begin{equation}\label{DvvDefn}
	D^\dx_{vv}u(x) = \frac{ u(x+\dx v) - 2(u) + u(x- \dx v)} {h^2 \norm{v}^2}, \quad x \in \Grd_V 
\end{equation}
The stencil width  of $D^\dx_{vv}$ is the width of $v$.\end{definition}
Write $\hat v = v/\norm{v}$.  Then for any smooth function $u(x)$, 
\begin{equation}\label{FiniteDiffError}
\frac{d^2 u}{d \widehat{v}^2}(x)  =  D^\dx_{vv}u(x)+ \bO(W^2 h^2)
\end{equation}

\begin{definition}
The direction set of width $W$ for the convex envelope is given by 
\[
\Dir^{CE,W} = \{ v \in \mathbb{Z}^n \mid \norm{v}_\infty \le  W \}.
\]
The direction set of width $W$ for the rank one convex envelope
  is defined  by first setting 
$\dir^{W2}  = \{  v\otimes w   \mid v,w \in\Dir^{CE,W} \}$, which is a set of rank one square  matrices.  Let $vec(M):M^{n\times n}\to \R^{n^2}$ be the natural identification of a matrix with a vector. Set 
\[
\Dir^{RC,W} = \{ vec(M) \mid M \in \dir^{W2}  \}
\]
\end{definition}

An illustration of a typical two dimensional grid, with grid directions indicated by edges can be found in~\autoref{fig:geometricgraphs}.
\begin{figure}
\begin{center}
%\scalebox{.4}{\includegraphics{FormulaIllustration}}
\scalebox{.6}{\includegraphics{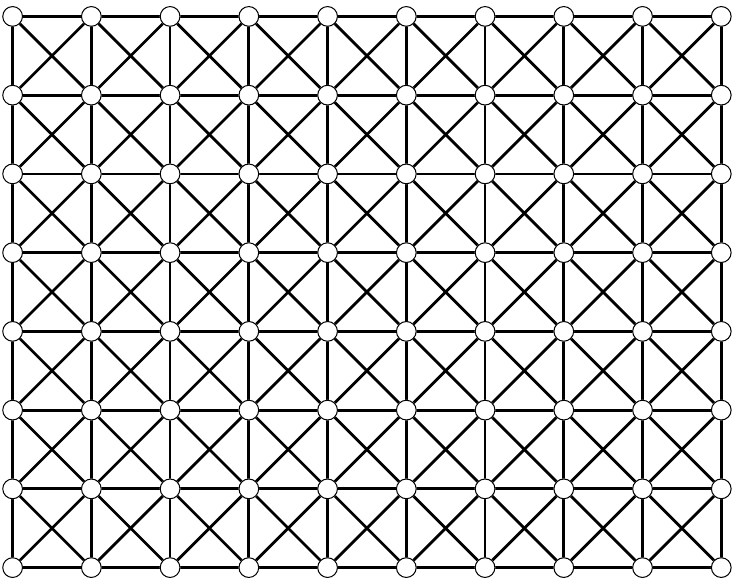}}
\end{center}
\caption{An illustration of a two dimensional grid $\Grd$ with direction set $\Dir^\dx$ indicated by the edges, corresponding to Example~\ref{ex:SyntheticFourGradient}.}
\label{fig:geometricgraphs}
\end{figure}
The full discretization of \eqref{obstacle}, which includes the boundary conditions, is given by the following.
\begin{definition}[Full discretization of PDE]\label{defn:fulldisc}
The discretization  on the finite difference grid $\Grd$ with direction set $\Dir^W$ of the operator $\LD$ (defined by \eqref{LDdefn}), is given by
\begin{equation} \label{LDh}
\LDh[u](x) = \min_{v \in \Dir^W} D^\dx_{vv}[u](x)
%\min_{v \in \Dir^W}    \frac { u(x + v)  -2u(x) + u(x - v)}{ |v|^2},
\end{equation}
The full discretization of \eqref{obstacle} is given by inserting \eqref{LDh} into \eqref{obstacle}.
\begin{equation}\label{fulldisc}
F^{W,h}[u](x) = \max \left\{  u(x) - g(x), -\LDh[u](x)  \right\}  = 0, \quad x \in \Grd_V
\end{equation}
\begin{equation}\label{GridBC}
	F^{W,h}[u](x) = u(x) - g(x), \quad x \in \partial \Grd
\end{equation}	
\end{definition}

\begin{remark}
We take $\dx$ small enough so that $x\pm h v \in \Grd$ for all $x\in \Grd_V$ and $v\in \Dir^W$. 
\end{remark}

\subsection{Elliptic difference schemes and the discrete comparison principle}
We define elliptic difference schemes in a general setting, and show that the  discretization \eqref{fulldisc} \eqref{GridBC} is elliptic. 
Then we prove that solutions of the discrete equation are unique.  The proof follows the pattern of the proof of uniqueness for the PDE.

\begin{definition}[Elliptic finite difference schemes]
The finite difference operator given by \eqref{fd.general}, $
\Sk[u](x) = \Sk(x, u(x), u(x) - u(\cdot) )$ 
is elliptic if 
\begin{equation}\label{Fellipticscheme}
	r \le s, ~ v(\cdot) \le w(\cdot) \implies 
\Sk(x,r,v(\cdot)) \le \Sk(x,s,w(\cdot))
\end{equation}
\end{definition}

\begin{lemma}\label{lem:SkEll}
	The finite difference operators $-\LDh$ and $F^{W,h}$ given by \eqref{LDh} and \eqref{fulldisc} are degenerate elliptic.
	\end{lemma}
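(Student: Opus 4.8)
The plan is to reduce the ellipticity of $F^{W,h}$ to that of $-\LDh$, exactly paralleling the proof of Lemma~\ref{lem:degell} for the PDE. First I would establish that $D^\dx_{vv}$ is monotone in the appropriate sense: writing $D^\dx_{vv}u(x) = \frac{u(x+\dx v) - 2u(x) + u(x-\dx v)}{\dx^2\norm{v}^2}$ in the incremental-variable form $D^\dx_{vv}(x, u(x), u(x)-u(\cdot))$, the coefficient on $u(x)$ itself is $-2/(\dx^2\norm{v}^2) < 0$, and the coefficients on the neighbouring values $u(x\pm\dx v)$ are positive; hence $D^\dx_{vv}$ is nondecreasing in each difference $u(x)-u(y)$ and nonincreasing in $u(x)$ — equivalently, $-D^\dx_{vv}$ satisfies \eqref{Fellipticscheme}. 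This is the key computation and I expect it to be the only real content of the proof; everything else is structural.

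Next I would observe that ellipticity (property \eqref{Fellipticscheme}) is preserved under pointwise minimum, so $\LDh[u](x) = \min_{v\in\Dir^W} D^\dx_{vv}[u](x)$ inherits the monotonicity of the individual $D^\dx_{vv}$, and therefore $-\LDh$ is elliptic in the sense of \eqref{Fellipticscheme}. Then, since $F^{W,h}[u](x) = \max\{u(x)-g(x),\, -\LDh[u](x)\}$, I would note that the map $u(x)\mapsto u(x)-g(x)$ is trivially elliptic (it is nondecreasing in $r=u(x)$ and independent of the difference terms), that ellipticity is also preserved under pointwise maximum, and hence $F^{W,h}$ is elliptic. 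For the boundary grid points $x\in\partial\Grd$ the operator is just $u(x)-g(x)$, which is manifestly elliptic. This gives degenerate ellipticity of $F^{W,h}$ on all of $\Grd$.

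Concretely, the write-up would be: (1) verify the two preservation facts — if $\Sk_1,\Sk_2$ are elliptic then so are $\min(\Sk_1,\Sk_2)$, $\max(\Sk_1,\Sk_2)$, and any nonnegative linear combination — these are immediate from \eqref{Fellipticscheme}; (2) check $-D^\dx_{vv}$ is elliptic by inspecting the sign of the coefficients as above; (3) conclude $-\LDh = \max_{v\in\Dir^W}(-D^\dx_{vv})$ is elliptic; (4) conclude $F^{W,h} = \max\{\,\cdot - g,\, -\LDh\,\}$ is elliptic. I do not anticipate any genuine obstacle here — this is a routine verification — but the one point requiring a little care is the bookkeeping of which sign conventions make $D^\dx_{vv}$ versus $-D^\dx_{vv}$ elliptic: because the scheme is written with $-\LDh$ (so that the PDE operator is $\max\{u-g,-\LDh\}$ and subsolutions are $\Dir$-convex), it is $-\LDh$, not $\LDh$, that must satisfy \eqref{Fellipticscheme}, and correspondingly one works with $-D^\dx_{vv}$. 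I would also remark, in parallel with \eqref{LDhomog}, that $\LDh[u+c](x) = \LDh[u](x)$ for constants $c$ and $\LDh$ is translation-equivariant in the sense $\LDh[u + cq](x) = \LDh[u](x) + c$ when $q(x) = \tfrac12|x|^2$-type functions are used, though strictly only the ellipticity claim is needed for this lemma.
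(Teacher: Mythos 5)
Your proof is correct and takes essentially the same approach as the paper's (which is a terse, three-sentence verification): inspect the signs in the stencil of $D^\dx_{vv}$, note that pointwise $\min$/$\max$ preserve the monotonicity in \eqref{Fellipticscheme}, and compose. You are in fact more careful about the sign convention — the paper writes ``$D^\dx_{vv}$ is elliptic,'' whereas, as you correctly point out, it is $-D^\dx_{vv}$ that satisfies \eqref{Fellipticscheme} in the variables $\bigl(r, u(x)-u(\cdot)\bigr)$ — and you also explicitly handle the boundary grid points, which the paper leaves implicit.
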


\begin{proof}
It is clear that $D^\dx_{vv}$ is elliptic. 

The finite difference operator $\LDh$ is a nondecreasing function of the directional second derivatives $-u_{vv}$, so it is elliptic.   The operator $F^{W,h}$ is a nondecreasing function of $u(x)$ and $-\LDh$ so it is also elliptic. 
\end{proof}

\begin{definition}[Discrete Comparison Principle]
Given the finite difference operator $\Sk: C(\Grd) \to C(\Grd)$, 
\emph{the comparison principle} holds for $\Sk$ if
%\begin{equation}\label{comparison}\tag{Comp}
\[
\Sk(u)  \le \Sk(v) \implies u \le v.
\]
\end{definition}

\begin{remark}
In the Discrete Comparison principle, the boundary conditions are encoded in $\Sk$: the assumption $\Sk[u] \leq \Sk[v]$ means $u \le v$ at Dirichlet boundary points.  Uniqueness of solutions clearly follows from the Discrete Comparison Principle. % since if $u,v$ are solutions, then  $\Sk(u) = \Sk(v) = 0$, so $u \le v$ and $u\ge v$, and thus  $u=v$.
\end{remark}

\begin{lemma}\label{FellLem}
Suppose the scheme $\Sk$ is elliptic. 
If $x$ is a non-negative global maximizer of $u-\phi$ we have
\[
\Sk(u)(x) \ge \Sk(\phi)(x)
\]
\end{lemma}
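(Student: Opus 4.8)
The plan is to unwind the definition of ellipticity \eqref{Fellipticscheme} and apply it to the pair of grid functions $u$ and a suitable translate of $\phi$. First I would set $x$ to be a non-negative global maximizer of $u-\phi$, so that $m := u(x) - \phi(x) \ge 0$ and $u(y) - \phi(y) \le m$ for all $y \in \Grd$. Introduce the shifted test function $\psi := \phi + m$, which is again a grid function (constants preserve the structure \eqref{fd.general} since $\psi(x) - \psi(\cdot) = \phi(x) - \phi(\cdot)$). Then $u(x) = \psi(x)$ and $u(y) \le \psi(y)$ for all $y$.

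Next I would extract from these inequalities exactly the hypotheses needed for \eqref{Fellipticscheme}, applied with the roles $r = \psi(x) = u(x)$, $s = u(x)$ — wait, more carefully: we want to compare $\Sk(\psi)(x)$ and $\Sk(u)(x)$. Writing $\Sk[w](x) = \Sk(x, w(x), w(x) - w(\cdot))$, note that $u(x) = \psi(x)$ handles the middle argument, and from $u(y) \le \psi(y)$ together with $u(x) = \psi(x)$ we get $u(x) - u(y) \ge \psi(x) - \psi(y)$, i.e. the finite-difference stencil vector for $\psi$ is pointwise $\le$ that for $u$. Ellipticity \eqref{Fellipticscheme} (with the increments in the order it is stated, so that larger increments give a larger value) then yields $\Sk(\psi)(x) \ge \Sk(\phi + m)(x) \ge$ — rather, $\Sk(u)(x) \ge \Sk(\psi)(x)$. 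Finally, since $\Sk$ depends on $\phi$ only through $\phi(x)$ and the increments $\phi(x) - \phi(\cdot)$, which are unchanged by adding the constant $m$, we have $\Sk(\psi)(x) = \Sk(\phi)(x)$, giving the claim.

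I do not expect a genuine obstacle here: this is the discrete analogue of the elementary fact that at an interior maximum of $u - \phi$ one has $D^2 u \preceq D^2\phi$, and the whole content is bookkeeping the direction of inequalities in \eqref{Fellipticscheme} correctly. The one point requiring a little care is the sign convention: ellipticity is phrased in terms of the increments $u(x) - u(\cdot)$ being monotone, so I must be sure that "$u$ above $\psi$ with equality at $x$" translates to "$\psi$'s increments $\le$ $u$'s increments," which it does, and hence $\Sk$ evaluated on $u$ dominates $\Sk$ evaluated on $\psi = \phi + m$. A secondary, even more trivial point is invariance of $\Sk[\cdot](x)$ under adding a constant to the argument, which is immediate from the form \eqref{fd.general}.
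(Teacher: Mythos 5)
Your decomposition into two comparisons, $\Sk(u)(x) \ge \Sk(\psi)(x)$ and then $\Sk(\psi)(x)$ versus $\Sk(\phi)(x)$ with $\psi = \phi + m$, is sound in structure, but the final step is wrong as stated. You claim $\Sk(\psi)(x) = \Sk(\phi)(x)$ on the grounds that adding a constant does not change the increments. But the scheme \eqref{fd.general} is $\Sk[u](x) = \Sk(x, u(x), u(x)-u(\cdot))$: it depends on the value $u(x)$ as a separate argument, not only on the increments, and $\psi(x) = \phi(x) + m \neq \phi(x)$ when $m > 0$. There is no translation invariance here. The correct relation is an inequality, $\Sk(\psi)(x) \ge \Sk(\phi)(x)$, obtained by applying ellipticity \eqref{Fellipticscheme} in the second argument with $r = \phi(x) \le s = \phi(x) + m = \psi(x)$ and identical increments. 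That step is precisely where the hypothesis that the maximum is \emph{non-negative} ($m \ge 0$) enters; as written your argument never actually uses non-negativity, which should have been a warning sign that something was missing.

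Once the final step is repaired, your argument is essentially the paper's proof with an unnecessary detour. The paper applies \eqref{Fellipticscheme} once, directly, using both $\phi(x) \le u(x)$ (from non-negativity) and $\phi(x)-\phi(\cdot) \le u(x)-u(\cdot)$ (from global maximality) simultaneously; you split this into two separate monotonicity applications via the intermediate function $\psi$, which is fine but buys nothing.
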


\begin{proof}
By assumption, $u(x) \ge \phi(x)$ and 
\[
u(x) - u(y) \ge \phi(x) - \phi(y)
\]
for all values of $y$.   Thus we have
\[
\Sk[u](x) = \Sk(x,u(x),u(x) - u(\cdot)) \ge \Sk(x,\phi(x),\phi(x) - \phi(\cdot)) 
\]
by \eqref{Fellipticscheme} since $\Sk$ is elliptic. 
\end{proof}

\begin{lemma}[Discrete Comparison Principle for strict subsolutions]\label{thm:Comp3}
Let $\Sk$ be an elliptic difference equation on the grid $\Grd$. 
 Let $u,v$ be grid functions.  Suppose  that for some $\e > 0$,
\begin{align*}
	\Sk[u] + \e  \leq \Sk[v] 
\end{align*}
Then the comparison principle holds:
\[
\text{ $u \le v$ on $\Grd$. }
\]
\end{lemma}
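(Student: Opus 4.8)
The plan is to mimic the formal proof of the continuous comparison principle (Theorem~\ref{thm:Comp1}), but in the discrete setting where the argument is actually rigorous because a grid function attains its maximum. Suppose for contradiction that $u \le v$ fails somewhere on $\Grd$, i.e. $M := \max_{x \in \Grd}\{u(x) - v(x)\} > 0$. Since $\Grd$ is a finite set, the maximum is attained; pick $x_0 \in \argmax_{x\in\Grd}\{u(x) - v(x)\}$. Then $u(x_0) \ge v(x_0)$, and for every $y \in \Grd$ we have $u(x_0) - u(y) \ge v(x_0) - v(y)$, which is exactly the hypothesis needed to compare the two grid functions pointwise in the stencil argument.

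Next I would apply Lemma~\ref{FellLem} (with the roles $u$ playing the touched function from above and $\phi = v$, noting $M \ge 0$ so $x_0$ is a ``non-negative global maximizer'' of $u - v$): ellipticity of $\Sk$ in the form \eqref{Fellipticscheme} gives
\[
\Sk[u](x_0) \ge \Sk[v](x_0).
\]
But the hypothesis of the lemma says $\Sk[u](x) + \e \le \Sk[v](x)$ for all $x \in \Grd$, in particular $\Sk[u](x_0) + \e \le \Sk[v](x_0)$, hence $\Sk[v](x_0) - \e \ge \Sk[u](x_0) \ge \Sk[v](x_0)$, giving $\e \le 0$, a contradiction. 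Therefore $M \le 0$, i.e. $u \le v$ on all of $\Grd$.

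The step I expect to carry the real content is the reduction to Lemma~\ref{FellLem}: one must check that the global maximizer $x_0$ of $u-v$ gives precisely the inequalities $r := u(x_0) \ge v(x_0) =: s$ and $u(x_0) - u(\cdot) \ge v(x_0) - v(\cdot)$ that feed into the ellipticity condition \eqref{Fellipticscheme}. This is where the discrete setting is genuinely easier than the continuous one: there is no need for the Crandall--Ishii lemma or doubling of variables, since a grid function on a finite grid literally attains its extrema, and ``touching by a $C^2$ test function'' is replaced by the trivial observation that $v$ itself touches $u$ from below at $x_0$ up to the additive constant $M$. No assumption on the structure of $\Sk$ beyond ellipticity is used, so the result applies verbatim to $F^{W,h}$ via Lemma~\ref{lem:SkEll}. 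The only mild subtlety is bookkeeping of the boundary: since Dirichlet data are encoded directly into $\Sk$ on $\partial\Grd$, the inequality $\Sk[u] \le \Sk[v]$ already forces $u \le v$ there, so the maximizer argument needs no separate boundary case — it runs uniformly over $\Grd = \Grd_V \cup \partial\Grd$.
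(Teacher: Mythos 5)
Your proposal is correct and follows essentially the same route as the paper's proof: argue by contradiction at a global maximizer of $u-v$ on the finite grid, invoke Lemma~\ref{FellLem} with $\phi = v$ to get $\Sk[u](x_0) \ge \Sk[v](x_0)$, and contradict the strict inequality $\Sk[u]+\e \le \Sk[v]$. Your additional remarks (attainment of the maximum on a finite grid, the boundary being handled automatically since the Dirichlet data are encoded in $\Sk$) are accurate elaborations of the same argument, not a different approach.
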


\begin{proof}
Suppose $u \not \le v$ in $\Grd$.  Let $x \in \argmax_{x\in \Grd} \{ u(x) - v(x) \}$.  
Then $x$ is a positive global maximum of $u-v$, so by Lemma~\ref{FellLem}
\[
\Sk[u](x) \ge   \Sk[v](x)
\]
which contradicts the assumption of strict inequality in the statement of the Theorem.  So $u \le v$.
\end{proof}

\begin{theorem}\label{thm:DiscreteComparison}
The comparison principle holds on $\Grd$ for the  discrete obstacle problem \eqref{fulldisc}\eqref{GridBC} for the $\Dir^\dx$-convex envelope.
\end{theorem}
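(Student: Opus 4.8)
The plan is to mimic the proof of the comparison principle for the PDE (Theorem~\ref{thm:Comp2}), but now in the discrete setting, using Lemma~\ref{thm:Comp3} (the discrete comparison principle for strict subsolutions) in place of Theorem~\ref{thm:Comp1}. The key observation is that Lemma~\ref{lem:SkEll} already tells us $F^{W,h}$ is elliptic, so all that is needed is to perturb a subsolution $u$ (a grid function with $F^{W,h}[u]\le 0$, which encodes $u\le g$ on $\partial\Grd$) to a \emph{strict} subsolution $u_\e$ with $F^{W,h}[u_\e]+\e\le F^{W,h}[v]$ whenever $v$ is a supersolution ($F^{W,h}[v]\ge 0$). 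Since a supersolution satisfies $F^{W,h}[v]\ge 0$ everywhere, it suffices to produce $u_\e$ with $F^{W,h}[u_\e]+\e\le 0$ on all of $\Grd$; then Lemma~\ref{thm:Comp3} gives $u_\e\le v$ on $\Grd$, and letting $\e\to 0$ yields $u\le v$.

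The perturbation is the discrete analogue of the quadratic bump used in Theorem~\ref{thm:Comp2}. Set $R=\max_{x\in\Grd}|x|$ and let $\phi(x)=\tfrac12(|x|^2-R^2-2)$, so $\phi\le -1$ on $\Grd$, and define $u_\e=u+\e\phi$. First I would check the interior estimate: because $\phi$ is a quadratic with $D^2\phi=I$, the exact identity $D^\dx_{vv}\phi(x)=1$ holds for every grid direction $v$ (the centred second difference of a quadratic is exact, independent of $h$ and $W$), hence $\LDh[u_\e](x)=\LDh[u](x)+\e$ by linearity of $D^\dx_{vv}$ and the fact that $\min_v(a_v+\e)=(\min_v a_v)+\e$. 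Therefore $-\LDh[u_\e]=-\LDh[u]-\e\le -\e$ on $\Grd_V$ when $u$ is a subsolution. Also $u_\e(x)-g(x)=u(x)-g(x)+\e\phi(x)\le u(x)-g(x)-\e\le -\e$ on $\Grd_V$. Taking the max of the two bounds gives $F^{W,h}[u_\e](x)\le -\e$ for $x\in\Grd_V$, i.e. $F^{W,h}[u_\e]+\e\le 0$ there.

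The only subtlety is the boundary: for $x\in\partial\Grd$ the scheme is the linear equation $F^{W,h}[u](x)=u(x)-g(x)$, and $F^{W,h}[u_\e](x)=u(x)-g(x)+\e\phi(x)\le u(x)-g(x)-\e$. Since $u$ is a subsolution, $u(x)-g(x)\le 0$ on $\partial\Grd$, so $F^{W,h}[u_\e](x)+\e\le 0$ on $\partial\Grd$ as well. Combining, $F^{W,h}[u_\e]+\e\le 0\le F^{W,h}[v]$ on all of $\Grd$, and Lemma~\ref{thm:Comp3} applies to give $u_\e\le v$ on $\Grd$; sending $\e\to 0$ gives $u\le v$. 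I expect the main (minor) obstacle to be bookkeeping the two cases $\Grd_V$ versus $\partial\Grd$ cleanly, and making explicit that $D^\dx_{vv}$ is \emph{exact} on quadratics so that no $\bO(W^2h^2)$ error term enters the strict inequality — this is what lets the argument work uniformly in $h$ and $W$ without any smallness assumption beyond $x\pm hv\in\Grd$.
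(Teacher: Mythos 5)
Your proposal is correct and is essentially the paper's own proof: the same quadratic grid function $\phi(x)=\tfrac12(\abs{x}^2-R^2-2)$ with $R=\max_{x\in\Grd}\abs{x}$, the exact identity $D^\dx_{vv}\phi=1$ giving $\LDh[u_\e]=\LDh[u]+\e$ together with $u_\e-g\le u-g-\e$, and then Lemma~\ref{thm:Comp3} followed by $\e\to 0$. The only (harmless) difference is that you state the hypothesis as $F^{W,h}[u]\le 0\le F^{W,h}[v]$, while the paper's discrete comparison principle assumes only $F^{W,h}[u]\le F^{W,h}[v]$; your key estimate $F^{W,h}[u_\e]\le F^{W,h}[u]-\e$, which you verify on both $\Grd_V$ and $\partial\Grd$ (the latter a detail the paper leaves implicit), yields that general case verbatim.
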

The proof follows the pattern of the proof of Theorem~\ref{thm:Comp2}, but in a discrete setting. 
\begin{proof}
We will show that for small enough $\e > 0$  we can perturb $u$ to a grid function $u_\e$ so that
\[
\Sk[u_\e] + \e  \leq \Sk[v] 
\]
Then we apply Lemma~\ref{thm:Comp3} to $u_\e, v$ to obtain $u_\e  \le v$ in  $\Grd$.   Taking $\e \to 0$ gives the desired result.  

We again use a quadratic function, except now it is a grid function. Set 
\[
\phi(x) = \frac{\abs{x}^2 - R^2 - 2}{2}, 
\qquad
u_\e(x) = u(x) + \e \, \phi(x)
\]
where $R = \max_{x\in \Grd} \abs{x}$. 

For any grid vector, $v$, $D^\dx_{vv}[\phi](x) =  1$, and so 
\[
D^\dx_{vv}[u_\e](x) =  D^\dx_{vv}[u](x) + \e 
\]
which means that 
\[
\LDh[u_\e](x) = \min_{v \in \Dir^W}   D^\dx_{vv}[u_\e](x) = \min_{v \in \Dir^W}   D^\dx_{vv}[u](x) + \e = \LDh[u](x) + \e 
\]
Also, $\phi(x) \le -1$, so 
\[
u_\e-g \le u - g - \e. 	
\]
Together the last two inequalities imply
\begin{align*}
\Sk[u_\e](x) &= \max \left \{ u_e(x) - g(x), -\LDh(D^2u_\e(x))  \right \} 
\\
& \le \max  \left \{ u(x) - g(x) - \e, -\LDh(D^2 u(x)) - \e \right \}
\\ 
&\le \Sk[u](x) - \e.	
\qedhere
\end{align*}
\end{proof}

\begin{corollary}\label{lem:stability}
Any solution $u^\dx$ of  \eqref{fulldisc}\eqref{GridBC} is  bounded independently of $\dx$, in particular
\[
\min_{x\in \Grd } g(x) \le u^\dx(x) \le \max_{x\in \Grd } g(x),
\qquad \text{ for all $x\in \Grd$.}
\]
 
\end{corollary}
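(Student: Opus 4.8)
The plan is to obtain both inequalities directly from the discrete comparison principle, Theorem~\ref{thm:DiscreteComparison}, by comparing $u^\dx$ against the two constant grid functions $v_+ \equiv \max_{\Grd} g$ and $v_- \equiv \min_{\Grd} g$. The only preliminary fact needed is elementary: a constant grid function has vanishing directional second differences, $D^\dx_{vv}[c] \equiv 0$ for every grid vector $v$, and hence $\LDh[c] = \min_{v\in\Dir^W} D^\dx_{vv}[c] \equiv 0$ on $\Grd_V$.

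For the upper bound I would verify that $v_+$ is a discrete supersolution, i.e. $F^{W,h}[v_+] \ge 0$ on all of $\Grd$. On $\Grd_V$ this follows from $F^{W,h}[v_+](x) = \max\{\,v_+ - g(x),\, -\LDh[v_+](x)\,\} = \max\{\,v_+ - g(x),\, 0\,\} \ge 0$ since $v_+ \ge g(x)$, and on $\partial\Grd$ from $F^{W,h}[v_+](x) = v_+ - g(x) \ge 0$. Since $u^\dx$ solves $F^{W,h}[u^\dx] \equiv 0$, we have $F^{W,h}[u^\dx] \le F^{W,h}[v_+]$ pointwise on $\Grd$, so Theorem~\ref{thm:DiscreteComparison} gives $u^\dx \le v_+$. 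The lower bound is symmetric: on $\Grd_V$ one has $F^{W,h}[v_-](x) = \max\{\,v_- - g(x),\, 0\,\} = 0$ because $v_- \le g(x)$, and on $\partial\Grd$ one has $F^{W,h}[v_-](x) = v_- - g(x) \le 0$; thus $F^{W,h}[v_-] \le 0 = F^{W,h}[u^\dx]$ on $\Grd$, and Theorem~\ref{thm:DiscreteComparison} gives $v_- \le u^\dx$. Combining the two inequalities is exactly the claimed bound.

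There is no serious obstacle here; the argument is a routine application of the already-established discrete comparison principle. The only points that require a moment's care are the sign conventions in the definition \eqref{fulldisc} of $F^{W,h}$ — so that $\min_{\Grd} g$ genuinely produces a discrete subsolution and $\max_{\Grd} g$ a discrete supersolution — and the observation (per the remark following the Discrete Comparison Principle) that the Dirichlet data are encoded in $F^{W,h}$ through \eqref{GridBC}, so that the pointwise inequality $F^{W,h}[u]\le F^{W,h}[v]$ on $\Grd$ already accounts for the behaviour at $\partial\Grd$ without any separate boundary comparison.
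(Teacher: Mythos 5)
Your proof is correct and follows essentially the same route as the paper: both deduce the two bounds from the discrete comparison principle by comparing $u^\dx$ with elementary comparison functions, using that constants have vanishing directional second differences. The only cosmetic difference is that the paper takes $g$ itself as the discrete supersolution (which in fact yields the slightly stronger bound $u^\dx \le g$), whereas you use the constant $\max_{x\in\Grd} g(x)$; nothing essential changes.
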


\begin{proof}
	This result follows from Theorem~\ref{thm:Comp3}, using  the facts that $g(x)$ is a supersolution, and the constant function $m$ is a subsolution. 
\end{proof}

\subsection{Existence of solutions by an iterative method}
In this section we will prove existence of solutions of the finite difference equation  using an iterative method.  The iterative method will also be used numerically to find  solutions of \eqref{fulldisc}\eqref{GridBC}.   

\begin{definition}[Iterative solution method] Define the map, $T: C(\Grd) \to C(\Grd)$  by
\bq\label{Solver_Iterative}
T(u)(x) = \min_{v \in \Dir^W}    \left (  g(x), \frac{u(x + hv)  + u(x- hv) }{2}\right )
\eq
for $x \in \Grd_V$ and $T(u)(x) = g(x)$ for $x\in \partial \Grd$.
\end{definition}

\begin{lemma}
	The grid function $u$ is a solution of \eqref{fulldisc} \eqref{GridBC}  if and only if it is a fixed point of $T$.	
\end{lemma}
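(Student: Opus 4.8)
The plan is to prove the two implications of the "if and only if" directly from the definitions of $T$ in \eqref{Solver_Iterative} and of the scheme $F^{W,h}$ in \eqref{fulldisc}\eqref{GridBC}. First I would handle the boundary grid points $x \in \partial\Grd$, which is immediate: both conditions reduce to $u(x) = g(x)$, since $T(u)(x) = g(x)$ by definition and $F^{W,h}[u](x) = u(x) - g(x) = 0$ iff $u(x) = g(x)$.

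For the interior points $x \in \Grd_V$, the key observation is the pointwise algebraic equivalence
\[
\max\{u(x) - g(x),\ -\LDh[u](x)\} = 0
\quad\Longleftrightarrow\quad
u(x) = \min\Bigl\{g(x),\ \min_{v\in\Dir^W}\tfrac{u(x+hv)+u(x-hv)}{2}\Bigr\}.
\]
I would establish this by unwinding $-\LDh[u](x) = -\min_{v}D^\dx_{vv}[u](x)$. Note that $D^\dx_{vv}[u](x) \le 0$ (for all $v\in\Dir^W$) is equivalent to $u(x) \ge \tfrac{u(x+hv)+u(x-hv)}{2}$ for all such $v$ — up to the positive factor $h^2\norm{v}^2$ in \eqref{DvvDefn}, which does not affect the sign — hence $-\LDh[u](x) \le 0$ iff $u(x) \le \min_v \tfrac{u(x+hv)+u(x-hv)}{2}$ is \emph{false}; more precisely $-\LDh[u](x) \le 0$ iff $\max_v(-D^\dx_{vv}[u](x)) \le 0$ iff $u(x) \ge \tfrac{u(x+hv)+u(x-hv)}{2}$ for every $v$, i.e. iff $u(x) \ge \min_v\tfrac{u(x+hv)+u(x-hv)}{2}$ when that minimum is attained... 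I need to be careful here: $D^\dx_{vv}[u](x) \ge 0$ for all $v$ is what makes $-\LDh[u](x) \le 0$, and this holds iff $u(x) \le \tfrac{u(x+hv)+u(x-hv)}{2}$ for all $v$, equivalently $u(x) \le \min_v \tfrac{u(x+hv)+u(x-hv)}{2}$. So the scheme $\max\{u(x)-g(x),-\LDh[u](x)\} = 0$ says: $u(x) \le g(x)$ and $u(x) \le \min_v\tfrac{\cdots}{2}$, with equality in at least one. That is exactly $u(x) = \min\{g(x), \min_v\tfrac{\cdots}{2}\} = T(u)(x)$.

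I would write this out as a short chain of equivalences, splitting the $\max = 0$ condition into "both terms $\le 0$, and at least one $= 0$," translating each term into the monotone-averaging form, and recognizing the result as $u(x) = T(u)(x)$. The main (minor) obstacle is bookkeeping the sign conventions and the positive normalizing factor $h^2\norm{v}^2$ in \eqref{DvvDefn} so that the inequalities $D^\dx_{vv}[u](x) \ge 0 \iff u(x) \le \tfrac{u(x+hv)+u(x-hv)}{2}$ come out correctly, and making sure the passage from "$\le$ for all $v$" to "$\le \min_v$" and from "$\max_v(\cdots) = 0$" to "equality for some $v$ or in the $g$ term" is stated cleanly. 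There is no deep difficulty; it is a definition-chase, and I would present it compactly in two short paragraphs, one for $\partial\Grd$ and one for $\Grd_V$.
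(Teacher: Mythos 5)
Your proof is correct and follows essentially the same route as the paper: a direct unwinding of the definitions, handling $\partial\Grd$ trivially and then translating the condition $\max\{u(x)-g(x),\,-\LDh[u](x)\}=0$ at interior points into $u(x)=T(u)(x)$ by dividing out the positive factor $h^2\norm{v}^2$ and solving for $u(x)$. The paper organizes the interior case by splitting on $u(x)=g(x)$ versus $u(x)<g(x)$ while you split the max-equals-zero condition into ``both terms $\le 0$ with equality in one,'' but this is only a cosmetic difference in bookkeeping, not a different argument.
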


\begin{proof}
Let $u$ be a solution of \eqref{fulldisc} \eqref{GridBC}.
For $x\in \partial\Grd$, the fixed point condition and the equation are the same.  So consider $x \in \Grd_V$.  The case $u(x) = g(x)$ is also clear.  So suppose $u(x)< g(x)$.  Multiply the second equation inside the maximum in \eqref{fulldisc} by the factor of $2/\abs{hv}^2$ since the right hand side of the equation is zero.  Then solving for the reference variable $u(x)$ leads to $u(x) = \min_{v \in \Dir^W}\left (g(x), (u(x + hv)  + u(x- hv))/{2}\right )$.  The steps can be reversed to show that a fixed point is a solution. 
\end{proof}

\begin{lemma} \label{lemma:existence}
There exist solutions to the finite difference equations \eqref{fulldisc} \eqref{GridBC} which are fixed points of  \eqref{Solver_Iterative}.
\end{lemma}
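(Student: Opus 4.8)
The plan is to obtain a solution as the limit of the monotone iteration generated by the operator $T$ from \eqref{Solver_Iterative}, in the spirit of Perron's method but in a discrete, order-theoretic setting. By the preceding lemma it suffices to produce a fixed point of $T$, and the structural facts I would use are: that $\Grd$ is a \emph{finite} set (since $D=[-1,1]^n$ is bounded), so that $C(\Grd)$ is finite dimensional and pointwise convergence of grid functions is the same as convergence in $C(\Grd)$; that $T$ is monotone; and that $T$ maps an explicit order interval into itself.

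First I would check monotonicity: if $u\le w$ on $\Grd$, then for every $x\in\Grd_V$ and every $v\in\Dir^W$ we have $\tfrac12(u(x+\dx v)+u(x-\dx v))\le\tfrac12(w(x+\dx v)+w(x-\dx v))$, and since $t\mapsto\min(g(x),t)$ and the minimum over the finite set $\Dir^W$ both preserve order, $T(u)(x)\le T(w)(x)$; on $\partial\Grd$ both sides equal $g(x)$. Next, $T$ is continuous: $T(u)(x)$ is the minimum of finitely many functions affine in the values of $u$, so $T$ is globally Lipschitz, in particular continuous, on $C(\Grd)$. Finally I would exhibit a sub/supersolution pair: take $\overline u=g$, so that $T(g)(x)\le g(x)$ on $\Grd_V$ (since $g(x)$ is one of the quantities being minimized) and $T(g)(x)=g(x)$ on $\partial\Grd$, i.e.\ $T(g)\le g$; and take the constant $\underline u\equiv m:=\min_{\Grd}g$, for which $T(m)(x)=\min(g(x),m)=m$ on $\Grd_V$ because $g(x)\ge m$, and $T(m)(x)=g(x)\ge m$ on $\partial\Grd$, i.e.\ $T(m)\ge m$, with $m\le g$. (Corollary~\ref{lem:stability} already records the resulting a priori bound $m\le u^\dx\le\max_{\Grd}g$.)

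With these in hand I would set $u^0=g$ and $u^{k+1}=T(u^k)$. Using $T(g)\le g$ together with monotonicity, an induction gives $u^{k+1}\le u^k$ for all $k$; using $g\ge m$, $T(m)\ge m$ and monotonicity, an induction gives $u^k\ge m$ for all $k$. Hence, for each fixed $x\in\Grd$, the real sequence $(u^k(x))_k$ is nonincreasing and bounded below, so it converges to some $u^*(x)$; as $\Grd$ is finite this is convergence in $C(\Grd)$. Letting $k\to\infty$ in $u^{k+1}=T(u^k)$ and invoking continuity of $T$ yields $u^*=T(u^*)$, and the preceding lemma then shows that $u^*$ solves \eqref{fulldisc}\eqref{GridBC}.

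I do not anticipate a serious obstacle: monotonicity, continuity, and the sub/supersolution property are each a one-line consequence of the explicit formula \eqref{Solver_Iterative}, and the argument is the discrete counterpart of the Perron-type construction behind \autoref{thm:PDE}. The only points worth a word of care are that the Dirichlet data are built into $T$ (so every iterate, and the limit, automatically satisfies \eqref{GridBC}), and that it is the finiteness of $\Grd$ that upgrades pointwise monotone convergence to convergence of grid functions; should one prefer a non-constructive formulation, the monotone self-map $T$ of the complete lattice $[m,\max_{\Grd}g]\subset C(\Grd)$ has a fixed point by the Knaster--Tarski theorem.
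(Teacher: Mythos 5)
Your proof is correct, but it follows a genuinely different route from the paper. The paper's argument is purely topological: it identifies $C(\Grd)$ with $\R^N$, takes the compact convex set $K=\{u : u=g \text{ on } \partial\Grd,\ m\le u\le g \text{ on } \Grd_V\}$, checks $T(K)\subset K$, and invokes the Brouwer fixed point theorem (continuity of $T$ being implicit). You instead exploit the order structure: monotonicity of $T$, the pair of barriers $T(g)\le g$ and $T(m)\ge m$ with $m=\min_\Grd g$, and the monotone iteration $u^0=g$, $u^{k+1}=T(u^k)$, whose decreasing, bounded-below orbit converges on the finite grid and, by continuity of $T$, yields a fixed point (equivalently, Knaster--Tarski on the order interval $[m,\max_\Grd g]$). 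All the individual steps you use are sound: monotonicity and continuity follow directly from the formula \eqref{Solver_Iterative}, the barrier inequalities are immediate since $g(x)$ appears inside the minimum and $g\ge m$, and the preceding lemma converts the fixed point into a solution of \eqref{fulldisc}--\eqref{GridBC}. What your approach buys beyond the paper's is that it is constructive and elementary: it avoids Brouwer entirely and, more usefully, it proves that the explicit iterative solver the paper actually runs converges monotonically from above to a solution, a fact the paper asserts only implicitly (existence via Brouwer, uniqueness via the discrete comparison principle) without proving convergence of the iteration. What the paper's Brouwer argument buys is brevity and indifference to monotonicity: it would survive perturbations of the scheme that break the order-preserving structure, whereas your argument leans on ellipticity-type monotonicity of $T$.
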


We make use of the Brouwer fixed point theorem: a continuous function from a convex, compact subset $K$ of Euclidean space to itself has a fixed point.  This fixed point will be the solution of the equation.

\begin{proof}
Identify $C(\Grd)$ with $\R^N$, where $N$ is the number of grid points in $\Grd$.
Set 
\[
m = \min_{x\in \Grd} g(x), \qquad
M =  \max_{x\in \Grd} g(x).
\]
and define the convex, compact set $K \subset \R^N$, 
\[
K = 
\left\{ 
u \in C(\Grd) ~\middle |~
\begin{aligned}
u(x) =  g(x),&&  x\in \partial \Grd,
\\
m \le u(x) \le g(x),&&  x \in \Grd_V
\end{aligned}
\right \},
\]
We need to show that  $T(K) \subset K$. If $u \in K$, then  $m \le u(x) \le M$ and for all $x$.  Then   
\[
m \le \frac{u(x + v)  + u(x- v) }{2} \le M, \quad \text{ for all $x \in \Grd_V$ and all $v \in \Dir^W$}.
\]
Since by definition, $m \le g(x) \le M$,
The last result implies that 
\[
m \le T(u)(x) \le M,
\qquad \text{ for all } x,
\]
which in turn means that $T(K)\subset K$.
\end{proof}

\subsection{Accuracy and consistency}

\begin{definition}[Consistent]\label{def:consistentSimpler}
The scheme~$\Sk$ is \emph{consistent} with the continuous function~$F$, if for any smooth function $\phi$ and $x\in{\Omega}$,
\[
\lim_{h \to 0,y\to x} \Sk[\phi](y) = F(D^2 \phi(x), \grad \phi(x),\phi(x),x)
\]
\end{definition}

\begin{definition}
Let $\Dir$ be a direction set and $\Dir^W \subset \Dir$ a grid direction set. 
The directional resolution of $\Dir^W$ (with respect to $\Dir$) is largest angle between any vector in $\Dir$ and the best approximation of it in $\Dir^W$ 
\begin{align}
\label{dtheta}
d\theta &\equiv \max_{ w \in \Dir} \min_{v \in \Dir^W }  \cos^{-1}(w^\intercal v).
\end{align}
\end{definition}

The following directional  estimate is used to establish a consistency result for approximations of directional convex functions using the smaller grid  direction sets. 
Consistency of the full discretization follows. 

\begin{lemma}[Consistency]\label{lem:cons}
Let $\Dir$ be a direction set and $\Dir^W \subset \Dir$ a grid direction set with directional resolution  $d\theta < \pi/2$. For any smooth function $u:\Rn\to \R$, 
\begin{equation}\label{consist}
\LDh u(x) - \LD u(x)  = \bO \left( (W\dx)^2 + d\theta  \right)	
\end{equation}
\end{lemma}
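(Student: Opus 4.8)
The plan is to bound the difference $\LDh u(x) - \LD u(x)$ by controlling two separate sources of error: the finite difference truncation error (the $(W\dx)^2$ term) and the directional discretization error coming from approximating the continuous direction set $\Dir$ by the grid direction set $\Dir^W$ (the $d\theta$ term). Write $A = D^2 u(x)$, which is a fixed symmetric matrix for each $x$ (uniformly bounded on $\Omega$ since $u$ is smooth, together with $\|D^3 u\|$, which controls the truncation constant). Recall $\LD u(x) = \inf_{v\in\Dir}\,\widehat v^\intercal A \widehat v$ and $\LDh u(x) = \min_{v\in\Dir^W} D^\dx_{vv}u(x)$, and from \eqref{FiniteDiffError} that $D^\dx_{vv}u(x) = \widehat v^\intercal A \widehat v + \bO(W^2\dx^2)$ uniformly over $v\in\Dir^W$ (all such $v$ have width at most $W$).

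First I would establish the upper bound $\LDh u(x) \le \LD u(x) + \bO((W\dx)^2 + d\theta)$. Fix $\delta>0$ and pick $w\in\Dir$ with $\widehat w^\intercal A\widehat w \le \LD u(x) + \delta$; then choose $v\in\Dir^W$ realizing (up to $\delta$) the minimum in \eqref{dtheta}, so the angle between $\widehat v$ and $\widehat w$ is at most $d\theta$. A short computation shows $|\widehat v^\intercal A \widehat v - \widehat w^\intercal A \widehat w| \le C\|A\|\,\|\widehat v - \widehat w\| \le C\|A\|\, d\theta$ (using $\|\widehat v - \widehat w\| \le C\, d\theta$ for $d\theta < \pi/2$, and writing $\widehat v^\intercal A \widehat v - \widehat w^\intercal A \widehat w = (\widehat v - \widehat w)^\intercal A \widehat v + \widehat w^\intercal A(\widehat v - \widehat w)$). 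Combining with \eqref{FiniteDiffError},
\[
\LDh u(x) \le D^\dx_{vv}u(x) \le \widehat v^\intercal A \widehat v + \bO(W^2\dx^2) \le \LD u(x) + C\|A\|\,d\theta + \bO(W^2\dx^2) + \delta,
\]
and letting $\delta\to 0$ gives one direction.

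Next I would establish the reverse bound $\LD u(x) \le \LDh u(x) + \bO((W\dx)^2)$, which is easier since $\Dir^W \subset \Dir$: for the $v\in\Dir^W$ attaining the discrete minimum, \eqref{FiniteDiffError} gives $\LDh u(x) = D^\dx_{vv}u(x) = \widehat v^\intercal A\widehat v + \bO(W^2\dx^2) \ge \LD u(x) + \bO(W^2\dx^2)$, since $\widehat v^\intercal A\widehat v \ge \inf_{w\in\Dir}\widehat w^\intercal A\widehat w = \LD u(x)$. Putting the two bounds together yields \eqref{consist}.

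The main obstacle is making the $d\theta$ error estimate precise and uniform: specifically, verifying that the error in the quadratic form $\widehat v^\intercal A\widehat v$ under a perturbation of the direction is $\bO(d\theta)$ rather than, say, $\bO(\sqrt{d\theta})$, and checking that the constant depends only on $\|D^2 u\|_{L^\infty(\Omega)}$ (hence is uniform in $x$). The Lipschitz dependence of the unit vector on the angle — $\|\widehat v - \widehat w\| = 2\sin(\theta/2) \le \theta$ — handles this cleanly as long as $d\theta$ stays bounded away from $\pi$, which the hypothesis $d\theta < \pi/2$ guarantees; one should also note that $\Dir$ being symmetric means the nearest grid direction is never "antipodal," so the relevant angle is genuinely at most $d\theta$. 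A minor subsidiary point is that the $\bO(W^2\dx^2)$ constant in \eqref{FiniteDiffError} involves $\|D^4 u\|$ (or $\|D^3 u\|$ with the symmetric stencil), which is finite by smoothness of $u$ on a neighborhood of $\overline\Omega$; this should be stated so the hidden constants are honestly accounted for.
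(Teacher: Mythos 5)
Your proposal is correct and follows essentially the same route as the paper: compare the optimal (or $\delta$-optimal) direction $w\in\Dir$ with its nearest grid direction in $\Dir^W$ to pick up the $\bO(d\theta)$ error in the quadratic form, use $\Dir^W\subset\Dir$ for the reverse inequality, and add the $\bO((W\dx)^2)$ finite difference truncation error from \eqref{FiniteDiffError}. The only cosmetic difference is that you bound the change in $\widehat v^{\intercal} D^2u(x)\,\widehat v$ via the Lipschitz estimate $\|\widehat v-\widehat w\|\le d\theta$, while the paper expands $v=\cos\theta\,w+\sin\theta\,z$; your version is if anything slightly more explicit about where the hidden constants come from.
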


\begin{proof}
Choose $w$ so that $\LD u(x) = \frac{d^2 u}{dw^2}(x)$ and $\norm{w} = 1$.  (If the infimum in $\LD$ is not a minimum, approximate it to within $\e$ by the value at $w$, and send $\e$ to zero). 
Compute
\[
\LDh u(x) = \min_{z\in \Dir^W} \frac{d^2 u}{dz^2}(x) \ge \frac{d^2 u}{dw^2}(x) = \LD u(x), 
\]
since $\Dir^W \subset \Dir$.
Let $\tilde v$ be a vector in $\Dir^W$ whose direction is closest to $w$. Write $v = \tilde v/\norm{\tilde v}$.  Let $\theta$ be the angle between $w$ and $v$. By~\eqref{dtheta},  $\theta \leq d\theta$.
Decompose 
\[
v = \cos\theta  w + \sin\theta z
\]
where $z$ is a unit vector orthogonal to $w$. Then compute
\[
\frac{d^2 u }{d v^2} 
 = \cos^2\theta \frac{d^2u}{d w^2} + \sin^2\theta \frac{d^2u}{dz^2} + 2 (\sin\theta \cos\theta) \,  w^\intercal D^2u\, z
\]
which gives 
\[
\frac{d^2 u }{d v^2}  = \frac{d^2u}{d w^2} + \bO(d\theta).
\]
Next, let  $g$ be a grid vector in the direction $v$.  The error for the finite difference expression for the second derivative from \eqref{FiniteDiffError} gives the additional term $\bO(\norm{g}^2) = \bO( (W h)^2)$,
\end{proof}

\subsection{Convergence}
We first paraphrase the Barles-Souganidis convergence theorem.  We include a proof for the convenience of the reader.  Our proof is slightly simpler than the original proof, because we assume our schemes are elliptic instead of monotone and stable.  As shows above, it is often easy to show that solutions of elliptic schemes are uniformly bounded, which satisfies the stability requirement of the theorem.  On the other hand, monotone schemes need not be stable~\cite{ObermanSINUM}. 
%\begin{remark}
%  The definition of \emph{monotone} schemes given in \cite{BSNum} is that $\Sk$ has the form 
%$\Sk[u](x) = \Sk(x,u(x),u(\cdot))$	
%where 
%\[
%v(\cdot) \le w(\cdot) \implies 
%\Sk(x,r,v(\cdot) ) \le \Sk(x,r,w(\cdot) )
%\]	
%which is natural for explicit schemes.  
%
%Generally, in practice, we start by building an elliptic scheme for the spatial derivatives, and obtain a stable scheme.  Explicit iterative methods  The only time stability needs to be tested is for 
%
%\end{remark}

\begin{theorem}[Convergence of Approximation Schemes \cite{BSNum} ]\label{thm:converge}
Consider the Dirichlet problem for the elliptic PDE, $F[u]=0$, \eqref{BCD},  on the bounded domain $\Omega$. 
Suppose the Comparison Principle holds.  For each $\e>0$,  let $u^\e$ be the solution of the consistent, elliptic finite difference scheme $F^\e$.
Assume that 
\begin{equation}\label{ueDir}
u^\e \in C(\bar\Omega), \qquad
u^\e  = g \text{ on } \partial \Omega	
\end{equation}
and that the functions $u^\e$ are bounded uniformly in $\e$. 
Then 
\[
u^\e \to u, \quad \text{ uniformly on $\bar \Omega$  as } \e \to 0.
\]
\end{theorem}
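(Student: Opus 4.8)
The plan is to follow the standard Barles--Souganidis argument, adapted to our elliptic (rather than monotone-and-stable) setting. Define the half-relaxed limits
\[
\bar u(x) = \limsup_{\e \to 0,\, y \to x} u^\e(y),
\qquad
\underline u(x) = \liminf_{\e \to 0,\, y \to x} u^\e(y).
\]
Since the $u^\e$ are bounded uniformly in $\e$ (by \autoref{lem:stability} in our case), both $\bar u$ and $\underline u$ are finite; moreover $\bar u \in USC(\bar\Omega)$, $\underline u \in LSC(\bar\Omega)$, and $\underline u \le \bar u$ everywhere by construction. The goal is to show $\bar u$ is a viscosity subsolution of $F[u]=0$ and $\underline u$ is a viscosity supersolution, with $\bar u \le g \le \underline u$ on $\partial\Omega$ (using \eqref{ueDir}); then the Comparison Principle forces $\bar u \le \underline u$ on $\bar\Omega$, hence $\bar u = \underline u =: u$ is the unique solution, and equality of the half-relaxed limits upgrades local uniform convergence to uniform convergence on the compact set $\bar\Omega$.

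The heart of the proof is the subsolution property of $\bar u$ (the supersolution property is symmetric). Let $\phi \in C^2$ and suppose $\bar u - \phi$ has a strict local maximum at an interior point $x_0 \in \Omega$. By a now-routine lemma on half-relaxed limits, one extracts $\e_k \to 0$ and grid points $x_k \to x_0$ such that $x_k$ is a local maximum of $u^{\e_k} - \phi$ on the grid and $u^{\e_k}(x_k) \to \bar u(x_0)$. Since $x_k$ is a (nonnegative, after subtracting a constant from $\phi$) local maximizer, \autoref{FellLem} gives
\[
0 = F^{\e_k}[u^{\e_k}](x_k) \ge F^{\e_k}[\phi](x_k),
\]
and passing $k \to \infty$ using consistency of the scheme (\autoref{lem:cons} together with Definition~\ref{def:consistentSimpler}) yields $F(D^2\phi(x_0), \nabla\phi(x_0), \phi(x_0), x_0) \le 0$, which is precisely the subsolution inequality at $x_0$. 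The usual perturbation trick reduces the case of a non-strict maximum to the strict one.

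The main obstacle — and the reason \eqref{ueDir} is included as a hypothesis — is the behavior at the boundary: half-relaxed limit arguments give nothing for free on $\partial\Omega$, and in general one only obtains that $\bar u$ is a subsolution and $\underline u$ a supersolution of the boundary-value problem in the relaxed (Barles--Perthame) sense. Here we sidestep this by assuming outright that $u^\e = g$ on $\partial\Omega$ with $u^\e \in C(\bar\Omega)$; this is exactly what the scheme \eqref{GridBC} enforces, and it is where \eqref{g_assumption} and \autoref{thm:PDE} were used to guarantee continuity up to the boundary. Given this, $\bar u \le g$ and $\underline u \ge g$ on $\partial\Omega$ follow directly from the definitions of the relaxed limits and uniform boundary agreement, so the Comparison Principle (\autoref{thm:Comp2}) applies cleanly. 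The remaining care is purely in the bookkeeping: verifying the extraction lemma for the maximizing sequence, handling the grid-restriction of $\phi$ in the consistency limit, and noting that $\bar u = \underline u$ continuous on compact $\bar\Omega$ implies the convergence $u^\e \to u$ is uniform (a standard Dini-type argument on the doubly-indexed family).
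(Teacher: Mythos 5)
Your proposal is correct and follows essentially the same route as the paper: half-relaxed limits $\bar u,\underline u$, the subsolution/supersolution property obtained from ellipticity (Lemma~\ref{FellLem}) plus the maximizing-sequence extraction (Lemma~\ref{lem:sequences}) and consistency, boundary agreement from \eqref{ueDir}, and the Comparison Principle (Theorem~\ref{thm:Comp2}) to force $\bar u = \underline u$ and hence uniform convergence. No substantive differences from the paper's argument.
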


Before proving the theorem, we state a standard lemma.
\begin{lemma}[Stability of Maxima]\label{lem:sequences}
Let $\Omega \subset \Rn$ be a domain, and let $u^\e \in USC (\bar \Omega)$  
be uniformly bounded.
Define
\[ 
\bar{u}(x) = \limsup_{\e\to0,y\to x} u^\e(y) 
\]
Suppose $x_0$ is the unique  global maximizer  of $\bar{u}$, with $\bar u(x_0) \ge 0$. 
Then there exist sequences $\e_n \to 0$, $y_n \to x_0$ such that  
\[
\begin{cases}
u^{\e_n}(y_n) \to \bar{u}(x_0)
\\
 y_n \text{ is a non-negative global maximum of }  u^{\e_n}.
\end{cases}
\]
\end{lemma}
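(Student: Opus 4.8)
The plan is to argue by a diagonal extraction combined with the $\limsup$ definition of $\bar u$ and a contradiction to rule out mass escaping to other points. First I would unwind the definition of $\bar u(x_0) = \limsup_{\e \to 0, y \to x_0} u^\e(y)$: this supplies sequences $\e_n \to 0$ and $z_n \to x_0$ with $u^{\e_n}(z_n) \to \bar u(x_0)$. These $z_n$ need not be maximizers of $u^{\e_n}$, so the next step is to replace them. Since each $u^{\e_n} \in USC(\bar\Omega)$ and $\bar\Omega$ is compact, $u^{\e_n}$ attains its global maximum over $\bar\Omega$ at some point $y_n \in \bar\Omega$; clearly $u^{\e_n}(y_n) \ge u^{\e_n}(z_n)$, and each $y_n$ is by construction a global maximizer. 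Because $\bar u(x_0) \ge 0$ and $u^{\e_n}(z_n) \to \bar u(x_0)$, for $n$ large we have $u^{\e_n}(y_n) \ge u^{\e_n}(z_n) \ge -\delta_n$ with $\delta_n \to 0$; this does not immediately give non-negativity, so I would instead note that the claim only needs $y_n$ to be a non-negative \emph{global} maximum in the relevant limit — more precisely, I would show $\liminf_n u^{\e_n}(y_n) \ge \bar u(x_0) \ge 0$, and if strict non-negativity at each finite $n$ is required one passes to a tail where $u^{\e_n}(y_n) \ge 0$, which holds once $\bar u(x_0) > 0$; in the boundary case $\bar u(x_0) = 0$ one perturbs by adding a vanishing positive constant, or simply observes the conclusion as stated tolerates this.

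The substantive step is to show $y_n \to x_0$. By uniform boundedness of the $u^\e$, the sequence $(y_n) \subset \bar\Omega$ is bounded, so along a subsequence $y_n \to y_* \in \bar\Omega$. I would show $y_* = x_0$. On one hand, $\limsup_n u^{\e_n}(y_n) \le \bar u(y_*)$ directly from the definition of $\bar u$ at $y_*$ (any sequence of arguments converging to $y_*$ with parameters tending to $0$ has $\limsup$ of values at most $\bar u(y_*)$). On the other hand, $u^{\e_n}(y_n) \ge u^{\e_n}(z_n) \to \bar u(x_0)$, so $\bar u(y_*) \ge \bar u(x_0) = \max_{\bar\Omega} \bar u$. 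Hence $y_*$ is also a global maximizer of $\bar u$; since $x_0$ is the \emph{unique} global maximizer, $y_* = x_0$. As every convergent subsequence of $(y_n)$ has limit $x_0$ and the sequence is bounded, the full sequence converges: $y_n \to x_0$. Combining, $\bar u(x_0) \le \liminf_n u^{\e_n}(y_n) \le \limsup_n u^{\e_n}(y_n) \le \bar u(x_0)$, so $u^{\e_n}(y_n) \to \bar u(x_0)$ as required.

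The main obstacle is precisely the uniqueness-of-maximizer argument: without uniqueness, the maxima $y_n$ could drift toward a different global maximizer of $\bar u$, and the conclusion would fail; the hypothesis that $x_0$ is the unique global maximizer is exactly what forces $y_* = x_0$ via the squeeze $\bar u(x_0) \le \bar u(y_*) \le \max \bar u = \bar u(x_0)$. A secondary technical point is the interplay between the two relevant notions — "maximizer of $\bar u$" versus "maximizer of $u^{\e_n}$" — which is handled by the standard semicontinuity inequality $\limsup_{\e\to 0, y \to x} u^\e(y) = \bar u(x)$ being an upper bound along any admissible sequence, together with attainment of maxima of USC functions on compacta. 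The $\e \to 0$ diagonalization is routine once these two facts are in place, and I would present it with a single extraction of a subsequence realizing both $\e_n \to 0$ and $y_n \to x_0$.
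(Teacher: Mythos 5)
Your core argument is correct and is essentially the standard proof that the paper does not spell out at all: the paper's ``proof'' of Lemma~\ref{lem:sequences} is only a citation to \cite[Lemma 2]{froese2013convergent}. Your steps are the expected ones: attainment of a global maximum of the USC function $u^{\e_n}$ on $\bar\Omega$ (note that this, and your later subsequence extraction, use compactness of $\bar\Omega$, i.e.\ boundedness of $\Omega$ --- true in the paper's setting $\Omega\subset D$, but you should state it, since it is not what ``uniform boundedness of $u^\e$'' gives you); the lower bound $u^{\e_n}(y_n)\ge u^{\e_n}(z_n)\to\bar u(x_0)$; the upper bound $\limsup_n u^{\e_n}(y_n)\le\bar u(y_*)$ along any convergent subsequence; identification $y_*=x_0$ from uniqueness of the maximizer; and the squeeze giving $u^{\e_n}(y_n)\to\bar u(x_0)$. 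All of that is sound.

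The genuine gap is your handling of the non-negativity claim in the case $\bar u(x_0)=0$. Neither of your fallbacks works: adding a vanishing positive constant to $u^{\e_n}$ proves a statement about different functions, and the conclusion as stated does not ``tolerate'' the failure --- it asserts $u^{\e_n}(y_n)\ge 0$ for each $n$. In fact that pointwise claim is false in this boundary case: take $\Omega=(-1,1)$ and $u^\e(x)=-x^2-\e$; then $\bar u(x)=-x^2$ has the unique global maximizer $x_0=0$ with $\bar u(x_0)=0$, yet $u^\e<0$ everywhere, so no $\e$ admits a non-negative global maximum. This is not an exotic corner: it is exactly the situation in which the lemma is invoked in the proof of \autoref{thm:converge}, where the test function is normalized so that $\phi(x_0)=\bar u(x_0)$, so the function to which the lemma is applied, $u^\e-\phi$, has limsup-maximum equal to $0$. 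What your argument actually proves is $\liminf_n u^{\e_n}(y_n)\ge\bar u(x_0)\ge 0$, i.e.\ $u^{\e_n}(y_n)\ge-\delta_n$ with $\delta_n\to 0$. You should state that as the conclusion (or restrict to $\bar u(x_0)>0$), and note how the application survives: in the convergence proof one either perturbs the test function by the vanishing constant $\delta_n$ and uses that consistency is stable under such perturbations, or observes that the specific scheme \eqref{fulldisc} depends continuously on the value $u(x)$, so the inequality needed from Lemma~\ref{FellLem} holds up to an $o(1)$ error. Waving this through is the one place where your proposal, as written, would not compile into a correct proof of the statement as literally stated.
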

\begin{proof}
This is standard technical result from the theory of viscosity solutions.  A proof can be found, in, for example, \cite[Lemma 2]{froese2013convergent}.
\end{proof}

\begin{proof}[Proof of Theorem~\ref{thm:converge}]
Define
\[ 
\bar{u}(x) = \limsup_{\e\to0,y\to x} u^\e(y),
\qquad
\underline{u}(x) = \liminf_{\e\to0,y\to x} u^\e(y). 
\]
Then $\bar{u}(x) \in USC(\bar{\Omega})$, $\underline{u}(x) \in LSC(\bar{\Omega}).$  Clearly from the definition,
\[
\underline{u} \leq \bar{u} \quad \text{ in }  \bar\Omega.
\]
By assumption \eqref{ueDir}, $\bar{u} = \underline{u}$ on $\partial \Omega$.
If we know that $\bar{u}$ is a subsolution and $\underline{u}$ is a supersolution, then we could apply the Comparison Principle, Theorem~\ref{thm:Comp2}, to $\bar{u}$ and $\underline{u}$ to conclude that
\[ 
\bar{u} \leq \underline{u}  \quad \text{ in }  \bar\Omega
\]
Together the last two inequalities imply that $\bar{u} = \underline{u}$, and that the limit $u$ is continuous.   Uniform convergence follows.

 It remains to show that $\bar{u}$ is a subsolution and $\underline{u}$ is a supersolution.    Given a smooth test function $\phi$, let $x_0$ be a strict global maximum of $\bar{u}-\phi$ with $\phi(x_0) = \bar{u}(x_0)$.  (We can assume that a local maximum is global by perturbing the test function.) 

By Lemma~\ref{lem:sequences}, applied to $v^\e = u^\e -\phi$,  we can find sequences with 
$\e_n \to 0, y_n \to x_0$, 
$u^{\e_n}(y_n) \to \bar{u}(x_0)$, 
where $y_n$ is a non-negative  global maximizer of $u^{\e_n}-\phi$. Then
\begin{align*}
0 &= F^{\e_n}[u^{\e_n}](y_n) %= F^{\e_n}(y_n,u^{\e_n}(y_n),u^{\e_n}(\cdot)), 
&& \text{ since $u^{\e_n}$ is a solution},
  \\
  &\geq F^{\e_n}[\phi](y_n)
  && \text{ by Lemma~\ref{FellLem}, since $\Fe$ is elliptic.}
\end{align*}
Next, 
\begin{align*}
0 &\geq \liminf_{n\to\infty} F^{\e_n}[\phi](y_n) \geq \liminf_{\e\to0,y\to x}F^{\e}[\phi](y)  
\\  &= F(x_0, \phi(x_0),\nabla\phi(x_0),D^2\phi(x_0)),
&& \text{by consistency of $F^\e$} % and by continuity of $F$}
  \\
  &= F(x_0, \bar{u}(x_0),\nabla\phi(x_0),D^2\phi(x_0)),
&& \text{since $\bar u(x_0) = \phi(x_0)$ }
\end{align*}
which shows that $\bar{u}$ is a subsolution.  

By a similar argument,  we can show that $\underline{u}$ is a supersolution.  
\end{proof}
 
Next we apply the convergence theorem in our setting.
We need to show that: our schemes are consistent, our schemes are elliptic, solutions to the schemes exist and are uniformly bounded, and that the PDE is well posed. Finally, we need to know the boundary conditions hold for $u^\e$ and $u$ in the strong sense. 

\begin{remark}[Interpolating the grid functions]
The numerical solutions are given on a grid, but to apply the theorem we need continuous  functions defined on $\Omega$.  To achieve this, simply fix a triangulation of the domain, and use piecewise linear interpolation of the grid functions.  To be precise, we would need to consider the full solution operator which includes the linear interpolation.  However, since the interpolation does not affect the necessary properties of the scheme, we can safely neglect this detail. 
\end{remark}

\begin{theorem}\label{thm:convergence}
Let $u = g^\Dir$ be the $\Dir$-convex envelope of $g$,  and suppose \eqref{g_assumption} holds.  
Let $u^{W,\dx}$ be the solution of the elliptic finite difference equation   $F^{W,\dx}$  \eqref{fulldisc} \eqref{GridBC}, and let $d\theta$ be the directional resolution.  Then 
\[
u^{W,\dx} \to u \text{ uniformly,} 
\quad
\text{ as } 
\dx, d\theta \to 0,
\]
\end{theorem}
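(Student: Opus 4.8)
The statement is a direct application of the Barles--Souganidis convergence theorem, \autoref{thm:converge}, and the plan is to verify each of its hypotheses using the results already established in the paper, taking the limit in the two parameters $\dx \to 0$ and $d\theta \to 0$ simultaneously. The ingredients needed are: (1) well-posedness of the limiting PDE \eqref{obstacle}\eqref{BCD}; (2) ellipticity of the schemes $F^{W,\dx}$; (3) existence of solutions $u^{W,\dx}$ to the schemes; (4) uniform boundedness of $u^{W,\dx}$ in the parameters; (5) consistency of $F^{W,\dx}$ with $F^{\Dir,g}$ as $\dx, d\theta \to 0$; and (6) the strong (continuous) form of the boundary conditions for both $u^{W,\dx}$ and $u$.

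\textbf{Key steps in order.} First I would recall that \autoref{thm:PDE} gives existence and uniqueness of the viscosity solution of \eqref{obstacle}\eqref{BCD} and identifies it with $g^\Dir$, and that \autoref{thm:Comp2} gives the comparison principle required by \autoref{thm:converge}; also \autoref{thm:PDE} asserts continuity up to the boundary and that $u = g$ on $\partial\Omega$, which is the strong boundary condition for the limit. Second, \autoref{lem:SkEll} shows $F^{W,\dx}$ is (degenerate) elliptic. Third, \autoref{lemma:existence} (together with the fixed-point characterization via $T$) shows solutions $u^{W,\dx}$ exist. Fourth, \autoref{lem:stability} shows $\min_\Grd g \le u^{W,\dx} \le \max_\Grd g$, hence the solutions are bounded uniformly in $\dx$ and $W$ by $\min_{D} g$ and $\max_{D} g$; since $g$ is continuous on the compact $D$, this bound is uniform. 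Fifth, consistency: \autoref{lem:cons} gives $\LDh u(x) - \LD u(x) = \bO((W\dx)^2 + d\theta)$ for smooth $u$, and inserting this into the definition \eqref{fulldisc} of $F^{W,\dx}$, the $\max$ of the obstacle term $u(x)-g(x)$ (which is exact) with $-\LDh[\cdot]$ converges to $\max\{u(x)-g(x), -\LD(D^2 u(x))\} = F^{\Dir,g}(D^2 u, u, x)$; hence $F^{W,\dx}$ is consistent with $F^{\Dir,g}$ in the sense of \autoref{def:consistentSimpler}, provided $d\theta \to 0$ as $W$ grows and $W\dx \to 0$, which is exactly the regime in the hypothesis. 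Sixth, the discrete boundary condition \eqref{GridBC} forces $u^{W,\dx} = g$ on $\partial\Grd$, and after piecewise-linear interpolation (as in the interpolation remark) the interpolants agree with $g$ on $\partial\Omega$ up to an error that vanishes with $\dx$ by uniform continuity of $g$; so \eqref{ueDir} holds in the limit. With all hypotheses of \autoref{thm:converge} verified, it yields $u^{W,\dx} \to u$ uniformly on $\bar\Omega$, and $u = g^\Dir$ by \autoref{thm:PDE}.

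\textbf{Main obstacle.} The delicate point is the interplay between the two limiting parameters: consistency of the scheme requires \emph{both} $W\dx \to 0$ (so the finite-difference truncation error vanishes) \emph{and} $d\theta \to 0$ (so the grid direction set $\Dir^W$ resolves $\Dir$), and these are compatible only if $W \to \infty$ while $\dx \to 0$ fast enough that $W\dx \to 0$ still holds. One must check that a sequence of grid direction sets $\Dir^W$ with $W \to \infty$ does achieve $d\theta \to 0$ for the relevant direction sets $\Dir^{CE,W}$ and $\Dir^{RC,W}$ (rational directions are dense on the sphere, and tensor products of dense sets are dense among rank-one directions), and that one can couple $W = W(\dx)$ so that $W\dx \to 0$; this coupling is implicit in writing ``$\dx, d\theta \to 0$''. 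A second, more routine, subtlety is that \autoref{thm:converge} as stated assumes the scheme solution is continuous on $\bar\Omega$ and equals $g$ on $\partial\Omega$ exactly, whereas the grid function only satisfies this at grid points; this is handled by the interpolation remark, and the small interpolation error at the boundary is absorbed by uniform continuity of $g$. Everything else is a direct citation of the preceding lemmas and theorems.
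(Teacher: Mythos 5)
Your overall strategy is the paper's: verify the hypotheses of the Barles--Souganidis result (Theorem~\ref{thm:converge}) using Lemma~\ref{lem:SkEll} (ellipticity), Lemma~\ref{lemma:existence} and Corollary~\ref{lem:stability} (existence and uniform bounds), Lemma~\ref{lem:cons} (consistency), and Theorem~\ref{thm:PDE} with Theorem~\ref{thm:Comp2} (well-posedness and identification of the limit with $g^\Dir$). However, there is a genuine gap in your step (6), the verification of the strong boundary condition \eqref{ueDir}. You argue that \eqref{GridBC} gives $u^{W,\dx}=g$ on $\partial\Grd$ and that piecewise-linear interpolation plus uniform continuity of $g$ then yields $u^{W,\dx}\to g$ on $\partial\Omega$. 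This does not follow: the interpolant at a point of $\partial\Omega$ also involves values at interior grid points $y\in\Grd_V$ adjacent to the boundary, and at such points the only a priori information you have invoked is $\min_\Grd g\le u^{W,\dx}(y)\le g(y)$. Nothing in your argument prevents $u^{W,\dx}$ from dipping far below $g$ at interior grid points arbitrarily close to $\partial\Omega$ (a boundary-layer effect), in which case the lower half-relaxed limit $\underline{u}$ would be strictly below $g$ on $\partial\Omega$ and the comparison step inside Theorem~\ref{thm:converge} could not be applied.

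The missing ingredient is precisely where assumption \eqref{g_assumption} enters the convergence proof: since $g_0$ is $\Dir$-convex and $\Dir^W\subset\Dir$, $g_0$ is $\Dir^W$-convex, hence a subsolution of the scheme, while $g$ is a supersolution; the discrete comparison principle (Theorem~\ref{thm:DiscreteComparison}) then gives the two-sided barrier $g_0\le u^{W,\dx}\le g$ on all of $\Grd$. Because $g_0=g$ on $D\setminus\Omega$ and both functions are continuous, this sandwich forces every limit of values $u^{W,\dx}(y_n)$ with $y_n\to x\in\partial\Omega$ to equal $g(x)$, which is what \eqref{ueDir} (and its use in the half-relaxed-limit argument) actually requires. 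This is the first step of the paper's proof, and without it your boundary argument fails; the rest of your verification, including your (correct and worthwhile) observation that consistency requires coupling $W\to\infty$, $W\dx\to0$ and $d\theta\to0$, matches the paper.
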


\begin{proof}
We first show that \eqref{ueDir} holds.  First note that $g$ is a supersolution of $F^{W,\dx}$.   Next, since by assumption~\eqref{g_assumption}, $g_0$ is $\Dir$-convex, and since $\Dir^W$ is a subset of $\Dir$, this implies that $g_0$ is $\Dir^W$ convex.  So $g_0$ is a subsolution of $F^{W,\dx}$.   By the discrete Comparison Principle, Theorem~\ref{thm:DiscreteComparison}, $g_0 \leq u^{W,\dx} \leq g$, so in particular, $u^{W,\dx} = g$ on $\partial \Omega$.  

By Theorem~\ref{thm:PDE} the $u = g^\Dir$ is the unique viscosity solution of  the Dirichlet problem for the $\Dir$-convex envelope \eqref{obstacle}\eqref{BCD}.  %Furthermore, by assumption \eqref{g_assumption}, the boundary conditions \eqref{BCD} are attained by $u$. 

The scheme $F^{W,\dx}$ is elliptic by Lemma~\ref{lem:SkEll}.  It is consistent by Lemma~\ref{lem:cons}.  Solutions of the scheme exist by Lemma~\ref{lemma:existence}.   
The functions $\bar{u}, \underline u$ are bounded between $m = \min_x g(x)$ and $M = \max_{x} g(x)$, by Lemma~\ref{lemma:existence} (or by Lemma~\ref{lem:stability}).

Combining these results, we can apply Theorem~\ref{thm:converge}.
\end{proof}

\section{Algorithm for finding laminates from the rank one convex envelope}\label{sec:algo}

We will approximate infinite order laminates by growing trees on the graph determined by the grid and the grid direction set.   
A similar algorithm is described in \cite{dolzmann2003variational}.  The trees need not terminate, but each branch of the tree eventually terminates in an extreme point.  Since each time a new branch is created, the corresponding weights decrease geometrically, we can approximate an infinite order laminate by a finite tree.

Define a $\Dir^W$ tree on the grid $\Grd$ to be an   $(H_l)$ sequence which lies on 
 $\Grd = \Grd_V \cup \partial \Grd$ and uses the direction set $\Dir^W$.

\begin{remark}[Visualization of Laminates]
A visualization of the construction of laminates can be found in \autoref{fig:lam2d} below. The cross denotes the barycenter, the hollow circle denotes the points resulting from each decomposition, the solid circle denotes the supporting points. The order refers to the number of decompositions involved. The same conventions apply to all figures that follow.
\end{remark}

\begin{definition}
Consider a graph with vertices $K \subset \Grd_V$, and whose edges $(x,y)$ consist of those pairs where $y-x$ is in the direction of some $d \in \Dir$.
A path in $K$ is a sequence $x_1, \dots, x_n$ where $x_i\in K$ and $x_{i+1}-x_i$ are edges. 
\end{definition}
 
\begin{definition}
A $\Dir$-tree in $K$ is given recursively by the following.  The single vertex $x$ is a $\Dir$-tree with root $x$.   Given any $\Dir$-tree, and any vertex with degree 1 or less, we can add the vertices $x_+$ and $x_-$ if both $x_-,x_+ \in K$ and 
% the edges, $(x,x_+), (x,x_-)$ are such that 
 \[
x_+ - x = k_+ d, 
\qquad
x_- - x = k_- d, 
\quad \text{ for } k_- < 0 < k_+, \text{ and some  }  d \in \Dir. 
\]
	
\end{definition}

\begin{definition}[$\Dir$-extreme points, $\Dir$-boundary points]
For $x\in K$, and $d\in \Dir$ we say $K$ is $d$-connected at $x$ if both $x+d$ and $x-d$ are in $K$. 
We say $x\in D$ is 
\begin{align*}
&\text{ an interior point,}  \text{ if $K$ is $d$-connected at $x$ for all } d \in \Dir
\\
&\text{ a boundary point,}   \text{ if $K$ is $d$-connected for some but not all $d\in \Dir$ at $x$ }
\\
&\text{ an extreme point,}  \text{ if  $K$ is not $d$-connected at $x$ for any }  d \in \Dir
%	x + \tilde d \not\in E, \quad \text{ for all } \tilde d \in \widetilde 
\end{align*}
%where $\widetilde\Dir$ is some subset of $\Dir$ which contains exactly one element of every pair $d,-d$.
Partition $\Dir = \Dir^+ \cup \Dir^-$ where for each $d\in \Dir$ exactly one of $d, -d$ is in each of $\Dir^+$, $\Dir^-$ and choose an ordering $d_1^+,d_2^+, \dots, d_n^+$ for $\Dir^+$, and a corresponding ordering for $\Dir^-$. 
\end{definition}

\begin{lemma}\label{lem:extremepoints}
	Let $K \subset \Grd_V$  If $K$ is nonempty, then $K$ contains an extreme point.
For each $x\in K$, there is a finite path (branch of the tree) with directions in $\Dir^+$ which terminates at an extreme point.
\end{lemma}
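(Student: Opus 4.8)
The plan is to prove both assertions by a single finiteness/descent argument, using the fact that $K \subset \Grd_V$ is a finite set and that moving in a fixed $\Dir^+$-direction strictly increases a suitable linear functional. First I would fix a direction $d \in \Dir^+$ that generates an edge at the given point $x$ (if no such direction exists, then $x$ is already an extreme point by definition, and the path is the trivial one-vertex path, so both assertions hold immediately). More generally, to get a canonical branch, I would use the chosen ordering $d_1^+, \dots, d_n^+$ on $\Dir^+$: starting from $x$, repeatedly move in the direction $d_i^+$ with $i$ smallest such that $K$ is $d_i^+$-connected at the current point — more precisely, such that the current point $+\,d_i^+$ lies in $K$.

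The key step is to show this process terminates. For this I would pick any vector $e$ in the (open, by symmetry of $\Dir$ and the fact that $\Dir^+$ picks one of each $\pm d$) convex cone generated by $\Dir^+$; concretely, since $\Dir$ spans $\Rn$, one can choose $e$ with $e^\intercal d > 0$ for every $d \in \Dir^+$ — e.g.\ $e = \sum_{j} d_j^+$ if this happens to have positive inner product with each $d_j^+$, or, failing that, an appropriate generic vector obtained from a separating-hyperplane / linear-programming argument applied to the finite set $\Dir^+$. Each step of the path moves from a point $y$ to $y + d_i^+$ for some $i$, and so increases the value of the linear functional $y \mapsto e^\intercal y$ by $e^\intercal d_i^+ > 0$. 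Since $K$ is finite, $e^\intercal y$ takes only finitely many values on $K$, so the path cannot continue forever: after finitely many steps we reach a vertex $z$ at which the construction cannot proceed. By construction, the construction halts at $z$ precisely when $K$ is not $d$-connected at $z$ for any $d \in \Dir^+$, equivalently (by symmetry of $\Dir$) not $d$-connected at $z$ for any $d \in \Dir$, i.e.\ $z$ is an extreme point. This simultaneously proves the second assertion (the finite path from an arbitrary $x\in K$) and, applied to any $x$, the first assertion (nonempty $K$ contains an extreme point).

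The main obstacle I anticipate is the choice of the functional $e$: one must be sure that a single $e$ works for \emph{all} of $\Dir^+$ simultaneously, and that the definition of $\Dir^+$ (one of each antipodal pair $\{d,-d\}$, chosen arbitrarily) actually guarantees $\Dir^+$ lies in an open half-space. This is true — a finite set containing exactly one of each $\pm d$ can always be separated from the origin by a hyperplane, since $0 \notin \Dir$ and the set contains no antipodal pair, so its convex hull does not contain $0$; then any inner normal $e$ to a supporting hyperplane of that convex hull at its nearest point to the origin satisfies $e^\intercal d > 0$ for all $d \in \Dir^+$. I would state this as a short sublemma (or inline remark) and then the descent argument is routine. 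A minor secondary point to be careful about is matching the two slightly different notions in the surrounding text — the recursive "$\Dir$-tree" definition allows adding \emph{both} $x_+$ and $x_-$ with $k_-<0<k_+$, whereas here we only need a single branch moving in $\Dir^+$-directions with step exactly $d$; I would note that a branch is just a path in $K$ along edges, so no conflict arises.
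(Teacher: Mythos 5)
Your overall strategy --- a greedy path governed by the priority ordering on $\Dir^+$, with termination guaranteed because a fixed linear functional strictly increases along the path in the finite set $K$ --- is essentially the paper's approach, made more explicit; the paper terminates the path by invoking ``the partial ordering introduced by $\Dir^+$,'' which is the same idea in a vaguer form. However, the sublemma you rely on is false as stated. It is \emph{not} true that a finite set containing exactly one of each antipodal pair necessarily has $0$ outside its convex hull, and hence not true that an arbitrary $\Dir^+$ lies in an open half-space. Counterexample: take $\Dir = \{\pm(1,0),\ \pm(0,1),\ \pm(1,1)\}$ in $\R^2$, a valid direction set, and choose $\Dir^+ = \{(1,0),\,(0,1),\,(-1,-1)\}$, a legitimate selection of one representative from each antipodal pair. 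Then $\tfrac{1}{3}\bigl[(1,0)+(0,1)+(-1,-1)\bigr] = (0,0)$, so $0$ lies in the convex hull of $\Dir^+$, and consequently no $e$ can satisfy $e^\intercal d > 0$ for every $d\in\Dir^+$ (otherwise $0 = e^\intercal 0 = \tfrac{1}{3}\sum_{d\in\Dir^+} e^\intercal d > 0$). So the descent functional $e$ you need does not exist for an arbitrary choice of $\Dir^+$, and your termination argument has a genuine gap exactly at the point you flagged as the ``main obstacle.''

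The gap is fixable, and it is worth noting that the paper's one-line termination claim (``$\Dir^+$ introduces a partial ordering'') implicitly requires the same pointedness of the positive cone of $\Dir^+$ and does not justify it either. Two clean repairs: (i) since the partition $\Dir = \Dir^+\cup\Dir^-$ is at one's disposal, pick a generic $e$ with $e^\intercal d \ne 0$ for all $d\in\Dir$ and \emph{define} $\Dir^+ = \{d\in\Dir : e^\intercal d > 0\}$, after which your functional argument works verbatim; or (ii) allow the path to step in either $+d$ or $-d$ for $d\in\Dir^+$ (i.e.\ along any direction of $\Dir$), and at each non-extreme point pick some $d$ for which $K$ is $d$-connected there and then the sign of $\pm d$ that increases a fixed generic $e^\intercal(\cdot)$; this is always possible and again gives strict monotone descent. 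Either way yields a correct termination proof. Separately, for the first assertion alone there is a shorter route: a maximizer of a generic linear functional over the finite nonempty set $K$ is automatically an extreme point, with no ordering of $\Dir^+$ needed at all.
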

\begin{proof}
	Given $x\in K$, if $x$ is not extremal, choose a path in $K$ which does the following: move as far as possible in the direction $d$, choosing $d$ from $\Dir^+$, with order of priority given by the ordering.   Since $D^+$ introduces a partial ordering on $K$ and each point in the path is comparable under the ordering, it is impossible to return to a previous point.  Since the set is finite, the path must terminate.  At the terminal point, it is impossible to move in any direction in $\Dir^+$.  So the terminal point is an extreme point of $K$.	
\end{proof}

Given the grid function $g \in C(\Grd)$, let $u = g^\Dir$ be the $\Dir$-convex envelope of $g$, and let 
 \[
K = \{ x \in \Grd_V \mid  u(x) = m = \min_{y\in \Grd} g(y) \}
 \]
be the minimal level set of $u(x)$ which is assumed to lie in $\Grd_V$ (recall from Corollary~\ref{lem:stability} that $u(x) \geq m$).   Define the set of supporting points
\[
P = \{ x \in K \mid u(x) = g(x)\}
\]  
Notice that every extreme point $x$ of $K$ is a supporting point.  (Suppose not, then $u(x) < g(x)$, so $\LDh u(x) = 0$, which means $u(x) = (u(x+d) + u(x-d))/2$ for some $d \in \Dir^W$, which contradicts the fact that $x$ is extreme.)

Our algorithm for extracting laminates from $K$, corresponds to decomposing a point $x\in K$ into an $(H_l)$ sequence, or $\Dir$-tree. 

Use the fixed ordering of the direction set, $\Dir$, and fix the maximum number of recursions, $N_{L}$.
Given $x \in K$, if $x$ is an extreme point, terminate.  If not, choose a direction $d \in \Dir$  in order of priority: 
\begin{enumerate}
\item Choose a direction which allows $x$ to be decomposed into two extreme points 
\item Choose a direction which allows $x$ to be decomposed into one extreme point, and one boundary point.
\item Otherwise, decompose $x$ into two boundary points, both distinct from $x$, choosing from directions where  $E$ is $d$-connected at $x$, according to the ordering. (Notice that this is possible even if $x$ is a boundary point).
\end{enumerate}
Extend $x$ in two directions, as far as possible, to the points $x+k_1d, x-k_2 d$.  Record the corresponding weights for each of the endpoints.  Apply the algorithm recursively to both endpoints, stopping at extremal endpoints or when the recursion limit is reached.

\section{Numerical Results}\label{sec:NumResults}
In this section we present computations of $\Dir$-convex envelopes, and laminates. We also present solution times, and convergence results.

\begin{remark}[Values of the parameters in practice]
 In practice, in four dimensions, we use grids with less than $100$ points in each dimension, or about $25$ million variables, and we use at most $256$ grid directions, which corresponds to stencils of width $W=3$.  We test convergence of the method in both parameters $\dx, d\theta$. 
 
We first wrote the code in MATLAB, where the largest examples took a few hours.  We then implemented the solver in $C$, which improved the solution time to under 10 minutes for the Kohn-Strang example with the largest grid size using $256$ directions.  Other examples took longer, see the numerical results section below.  See also Remark~\ref{rem:speed} for further improvements to solution time. 
\end{remark}

This algorithm \eqref{KSiteration} was implemented in \cite{Dolzmann} and studied in \cite{DolzmannWalkington}.  A quantitative error estimate for the difference between the rank one convex envelope, and the numerical directional convex envelope using a finite number of directions on a grid of resolution $h$ was established in \cite{DolzmannWalkington}.  	The directions which are used are given by 
\[
\Dir^h = \{ a \otimes b \mid \abs{a}, \abs{b} \le h^{-1/3} \}
\]
in that case, the convergence rate
\[
\abs{G^h - G^{rc}}_\infty \le C \abs{G}_{Lip} h^{1/3}
\]
is established.  
%Since we can interpret the algorithm as a particular solution method for \eqref{obstacle} \eqref{BCD}, the same convergence result applies to the PDE. 

\begin{remark}
In practice, in \cite{DolzmannWalkington} the smallest value of $\dx$ used was $1/65$ which corresponds to $65^4$ variables. In our case, using a laptop we had  a grid of size $71^4$ which corresponds to  $\approx 25\times 10^6$ variables.
So in the convergence rate estimate, $h^{1/3} = .2$.  Certainly we are outside the asymptotic regime.
\end{remark}

\begin{remark}\label{rem:solution_methods_PDE}
An alternative to the iterative method is to perform one dimensional  directional convex envelopes (for which we have fast algorithms) and iterate these over the directions.   In two spatial dimensions, convexification along lines was faster that the iterative method.  But when the direction set is large, for example using $256$ directions in the four dimensional case, convexification along lines is much slower than the iterative method.  The solution times for the two methods are presented in Table~\ref{table:solutiontimes2d}.
 \end{remark}

\begin{remark}[Improved solution speed]\label{rem:speed}
	After this article was completed, we discovered a method to find solutions much more quickly. We found in~\cite{ObermanQuasicConvex} that by iterating a line solver (for a different type of envelope) with a moderate number of iterations of the iterative solver, we could significantly improve the solution speed.   Instead of taking on the order of $1/\dx^2$ iterations of the iterative solver, we could alternately perform (i) a line solver for each direction and (ii) $1/\dx$ iterations of the iterative solver.  Doing this about 10 times resulted in the solution to within a small tolerance.  Experiments with convex envelopes obtained comparable results.  We expect similar results for this problem. 	
\end{remark}

\begin{example}[Specific choices of direction sets]
We label the following direction sets, which are used in building the direction set for the computational examples. 
\begin{align*}
\dir_{4} &= ~~~~~~\left\{  (1,0), (0,1), (-1,1), (1,1) \right \} \\
\dir_{8} &= \dir_{4}  \cup  \{(2,1), (1,2), (-1,2), (-2,1) \} \\
\dir_{16} &= \dir_{8} \cup \{ (3,1), (3,2), (2,3), (1,3), (-3,1), (-3,2), (-2,3), (-1,3) \}
\end{align*}
We define the following rank one direction sets, which correspond to width one, two, and three stencils.
\begin{align*}
\Dir_{16} = \dx \dir_{4}\otimes\dir_{4},  %  \{ e_{i}\otimes e_{j} \mid e_{i},e_{j}\in \dir_{4} \}\\
\qquad
\Dir_{64} = \dx \dir_{8}\otimes\dir_{8},   %\{ e_{i}\otimes e_{j} \mid e_{i},e_{j}\in\mathcal{V}_{8} \}\\
\qquad
\Dir_{256} = \dx \dir_{16}\otimes\dir_{16} %\{ e_{i}\otimes e_{j} \mid e_{i},e_{j}\in\mathcal{V}_{16} \}\\
\end{align*}	
\end{example}

\subsection{The Kohn-Strang example}

\begin{example}[The Kohn-Strang example]
In this section we consider the example from \cite{kohn1986optimal1} \cite{kohn1986optimal2}.  The accuracy of solutions we found was quite similar to the values reported in~\cite{Dolzmann},  

The computation used
\[
G(M) =
\begin{cases}
	1 + \abs{M}^2, & M \not = 0
	\\
	0, & M = 0
\end{cases}
\]
The rank one convex envelope is given by
\[
G^{rc}(M) =
\begin{cases}
	1 + \abs{M}^2, & \rho(M) \ge 1
	\\
	2\rho(M) - 2D, & \rho(M) \le 1
\end{cases}
\]
where $D = \abs{\det{M}}$ and $\rho(M) = \sqrt{\abs{M}^2 + 2D}$.
This calculation is for a discontinuous function $G$.  
%Convergence to the exact solution in terms of maximum error is shown in  Table~\ref{table:KohnExample}. 
 Another option is to consider (as in \cite{Dolzmann})
\[
\tilde G(M) =
\begin{cases}
	1 + \abs{M}^2, & \abs{M} \ge \sqrt 2 - 1
	\\
	2\sqrt 2 \abs{M} & \text{ otherwise} \end{cases}
\]	
In this case, we show the error (which is the same) and computation times in Table~\ref{table:ContinuousKohnTime}.  The computation times were longer for this example.
\end{example}

%\begin{table}[ptb]
%\centering
%\begin{tabular}
%[c]{ccllll}%
%Gidsize & $dx$ & $\Dir_{16}$ & $\Dir_{64}$ & $\Dir_{144}$ & $\Dir_{256}$ \\\hline
%$45^4$ & 0.2500 & 0.0439 & 0.0439 & 0.0439 & 0.0439\\
%$57^4$ & 0.1667 & 0.0385 & 0.0278 & 0.0278 & 0.0278\\
%$69^4$ & 0.1250 & 0.0672 & 0.0313 & 0.0313 & 0.0313\\
%$81^4$ & 0.1000 & 0.0760 & 0.0139 & 0.0139 & 0.0139
%\end{tabular}
%\caption{Computational error for the  Kohn-Strang example. This is for the case of the function $W$.
%% which is discontinuous at the origin.
%}%
%\label{table:KohnExample}
%\end{table}
We computed both examples, and found the error was the same.  The longest computational time for the first example was 10 minutes, compared to about half an hour for the second example. 
In Table~\ref{table:ContinuousKohnTime} we also present the error in the maximum norm, and the computational time.  Note that the error is dominated by the $h$, improving $d\theta$ does not improve the error.  This is not the case for later (less symmetric) examples.

\begin{table}[ptb]
\centering
\begin{tabular}
[c]{ccllll}%
Gidsize & $dx$ & $\Dir_{16}$ & $\Dir_{64}$ & $\Dir_{144}$ & $\Dir_{256}$ \\\hline
$45^4$ & 0.2500 & 0.0439 (  3.8) & 0.0439 ( 9.34) & 0.0439 (   7.25) & 0.0439 (  13)\\
$57^4$ & 0.1667 & 0.0385 ( 23.9) & 0.0278 ( 67.7) & 0.0278 (   69.6) & 0.0278 ( 136)\\
$69^4$ & 0.1250 & 0.0672 ( 95.5) & 0.0313 (290.8) & 0.0313 ( 363.5) & 0.0313 ( 693)\\
$81^4$ & 0.1000 & 0.0760 (282.6) & 0.0139 (906.6) & 0.0139 (1218. ) & 0.0139 (2218)
\end{tabular}
\caption{Computational error and time (in seconds) for the Kohn-Strang example smoothed at the origin.
}%
\label{table:ContinuousKohnTime}
\end{table}

\subsection{The Classical Four Gradient Example}
We begin with a classical example, which is discussed in \cite[Section 2.5]{Muller}.  It is also referred to as the 
\begin{example}\label{ex:ClassicalFourGradient}
\label{eg4gradient}

Consider the set  $K=\{A_{1},A_{2},A_{3},A_{4}\}$,  of four $2\times2$ diagonal matrices, 
\begin{equation}\label{K4g}
A_{1}=-A_{3}=\left(
\begin{array}
[c]{cc}%
-1 & 0\\
0 & -3
\end{array}
\right)  ,\text{ }A_{2}=-A_{4}=\left(
\begin{array}
[c]{cc}%
-3 & 0\\
0 & 1
\end{array}
\right)  .
\end{equation}

There are no rank one connections in the set $K$. The rank one convex hull of $K$ is the unit square plus four segments connecting the four supporting points.  In this example, since all four matrices are diagonal, it reduces to a two-dimensional problem.
Here the rank one convex hull can also be regarded as the $D_2$-convex hull where
\[
\Dir_{2} = \{ e_{1}, e_{2} \},
\qquad
e_{1}=\left(
\begin{array}
[c]{cc}%
1 & 0\\
0 & 0
\end{array}
\right)  ,e_{2}=\left(
\begin{array}
[c]{cc}%
0 & 0\\
0 & 1
\end{array}
\right).
\]
This allows the computation to be performed in two dimensions.  See Figure~\ref{fig:4Ghull} for the 
envelope and hulls, and for an illustration of the laminates which are extracted
directly from the computed rank one convex hull of $K.$   Different orderings of the direction sets can give different laminates.

\end{example}

\begin{figure}
\scalebox{.37}{
\hspace{-3cm}
\includegraphics{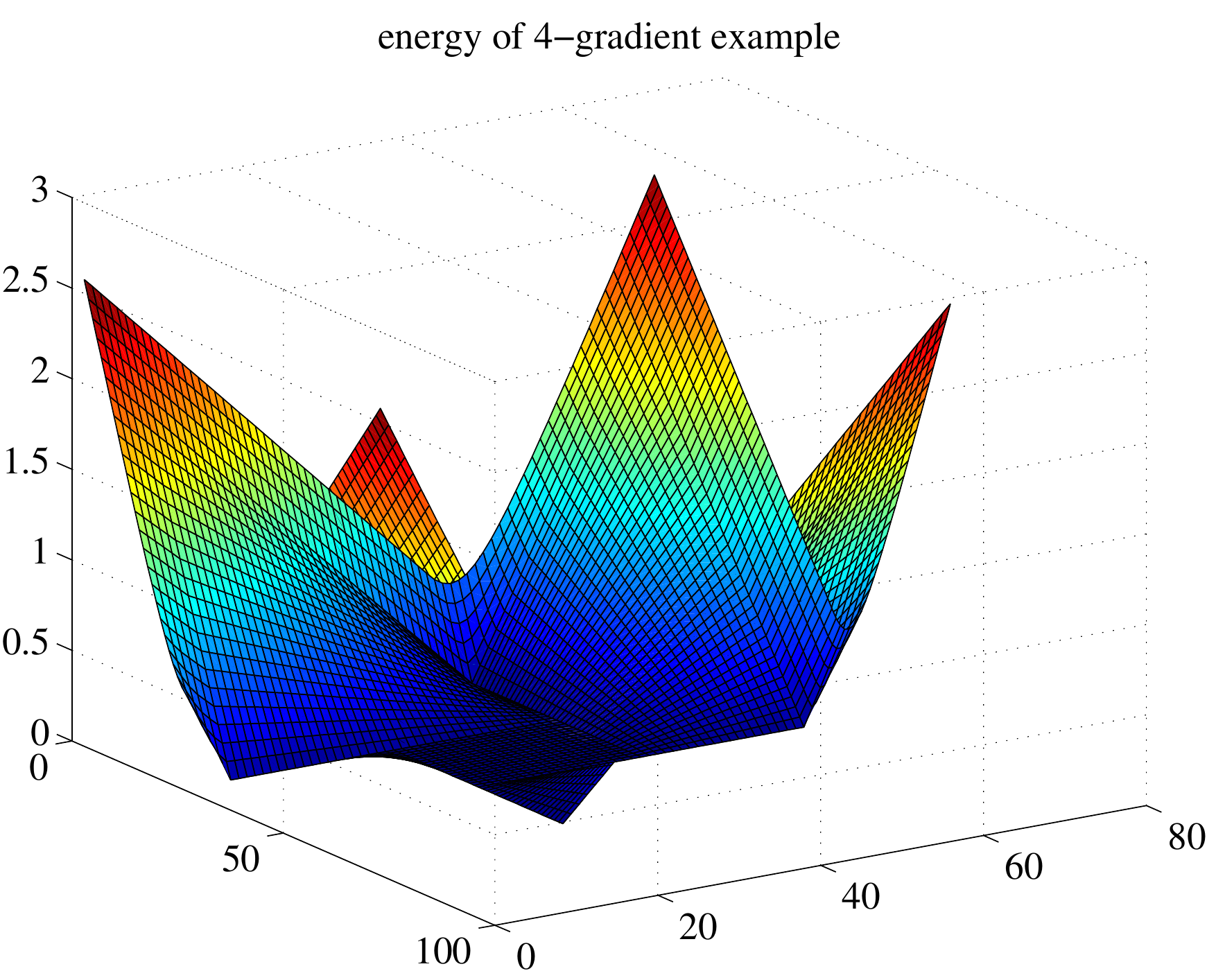}
\includegraphics{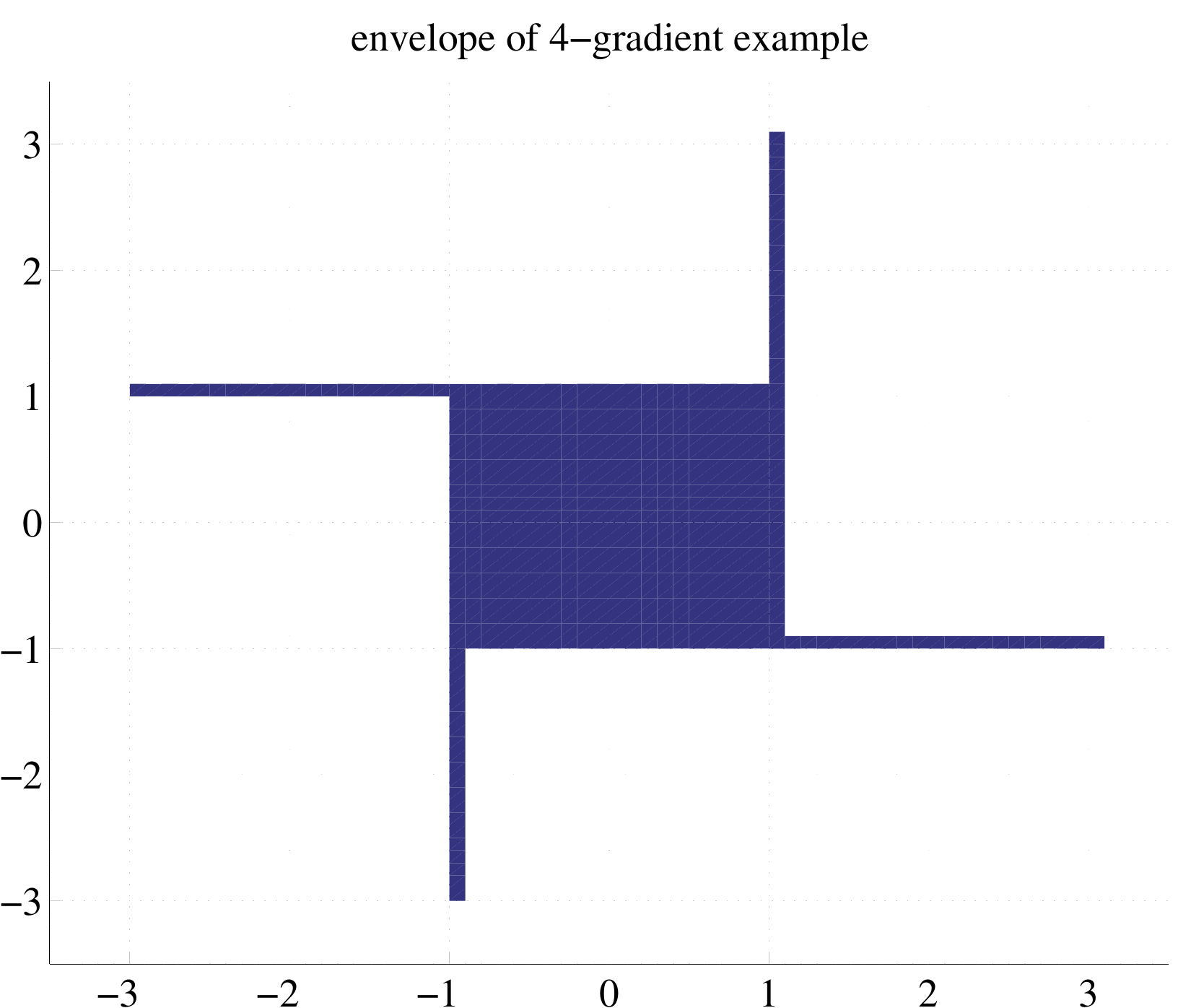}
}
\scalebox{.25}{
\hspace{-4cm}
\includegraphics{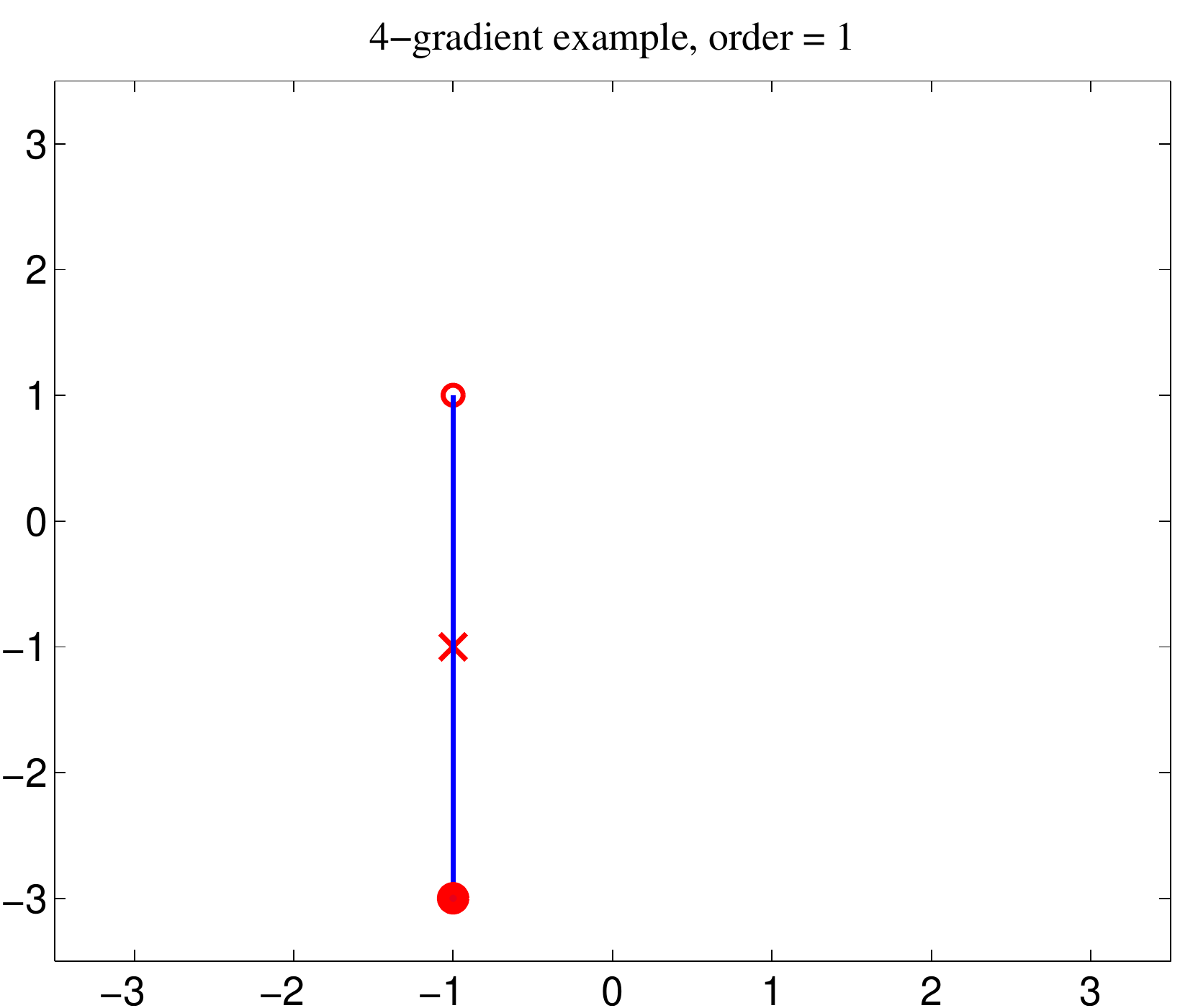}
\includegraphics{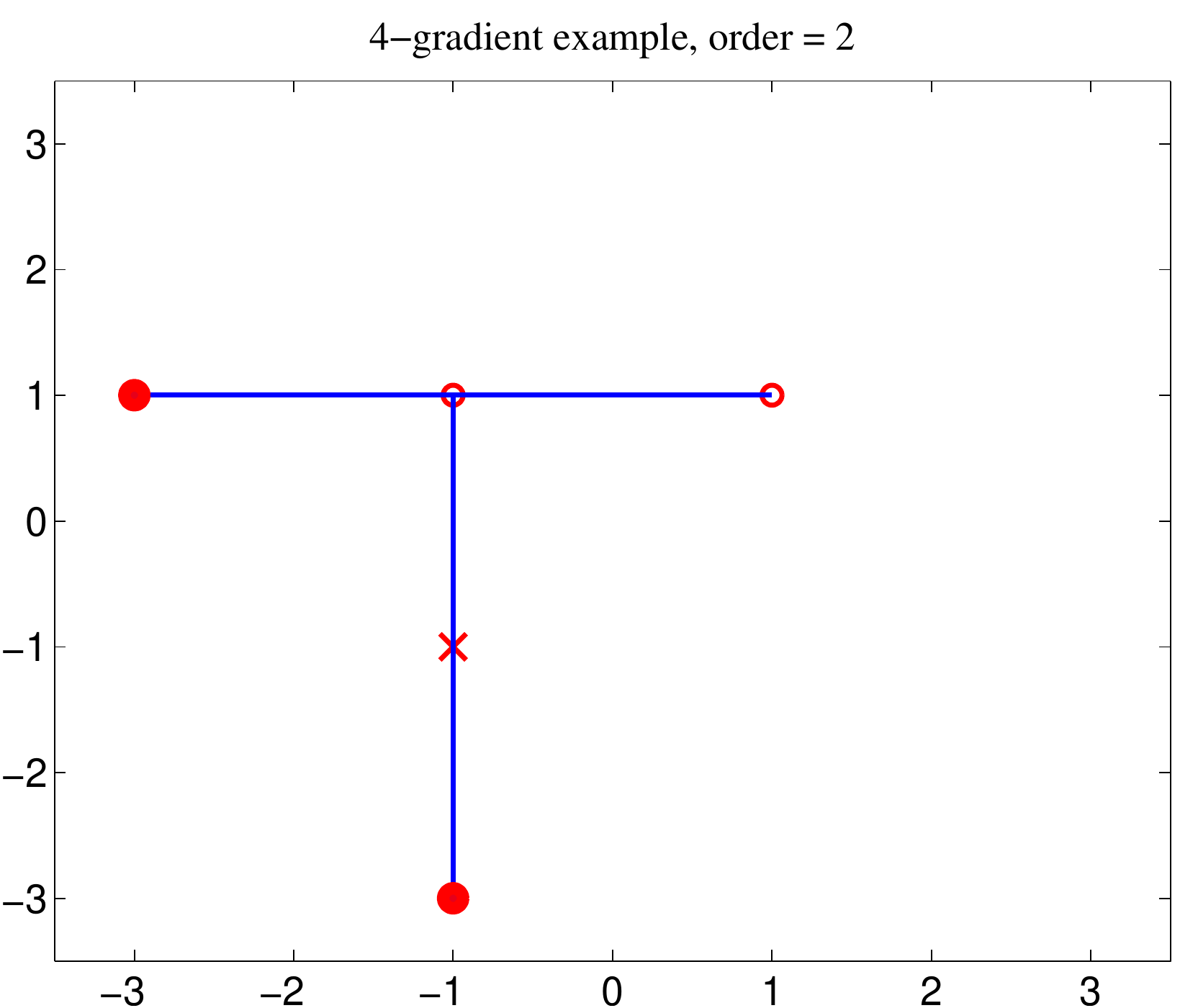}
\includegraphics{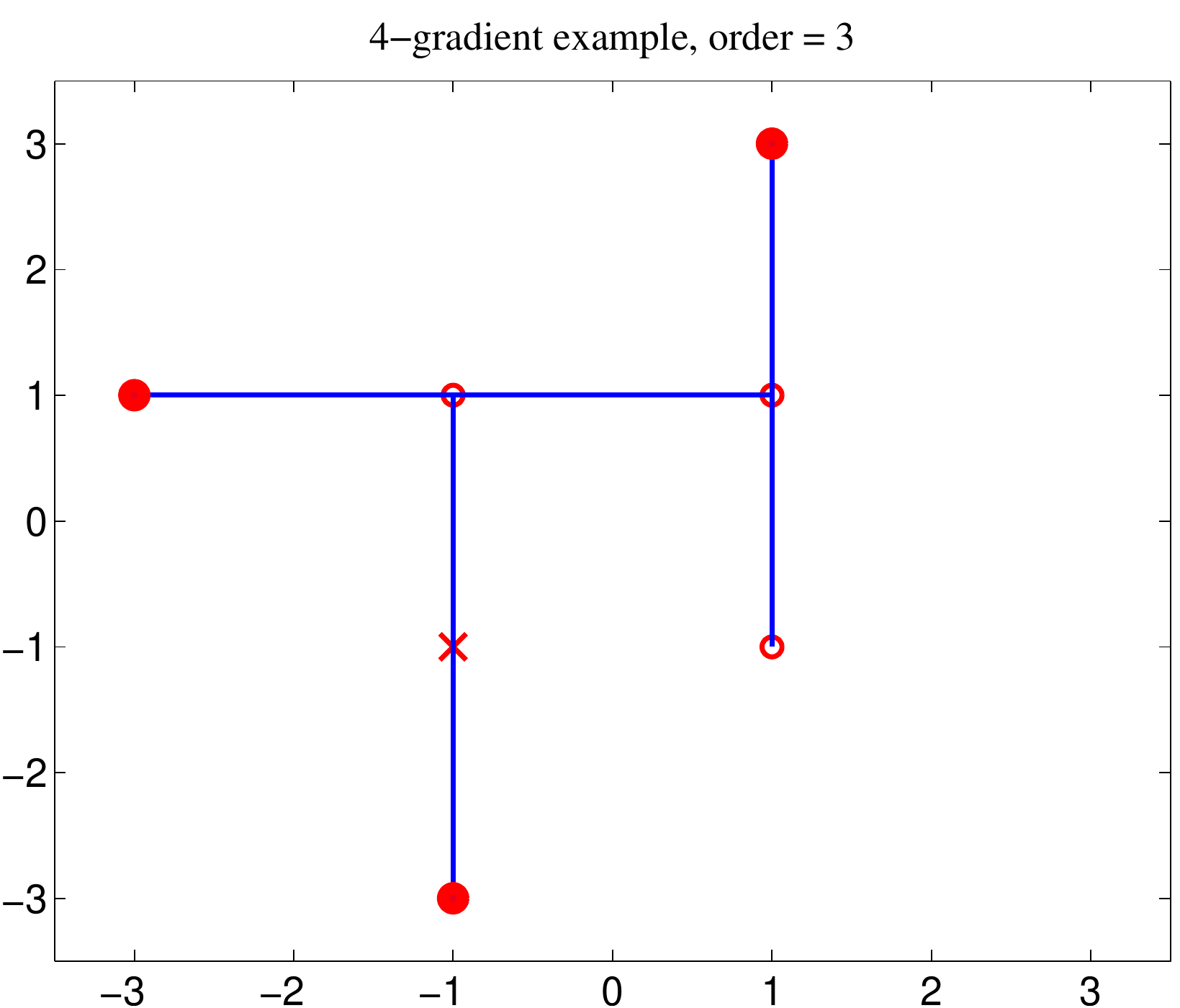}
\includegraphics{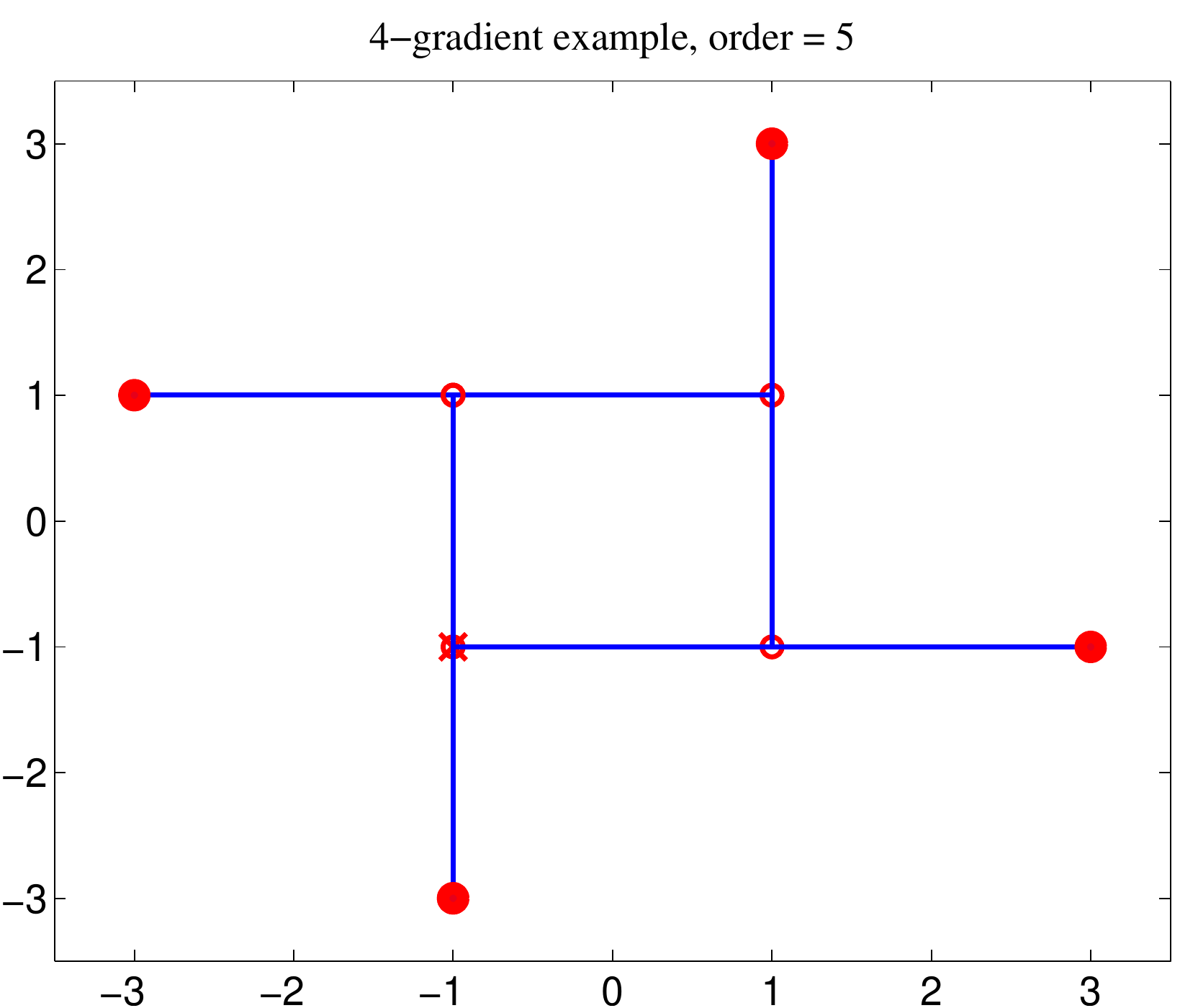}
}
%\scalebox{.25}{
%\hspace{-4cm}
%\includegraphics{Lam2dr2_371_level2_Horizontal}
%\includegraphics{Lam2dr2_371_level3_Horizontal}
%\includegraphics{Lam2dr2_371_level2_Vertical}
%\includegraphics{Lam2dr2_371_level3_Vertical}
%}
%caption{ $K=\{A_{1},A_{2},A_{3},A_{4}\}$.}%

\caption{Example~\ref{ex:ClassicalFourGradient}. 
Top: Rank one convex envelope and hull. %$\Dir_2$-convex hull of four points. 
Bottom: laminates extracted directly from the computed rank one convex envelope. The levels increase from left to right. 
%increasing the levels.
%  Bottom: Different laminates with the same barycenter, starting from horizontal (first and second figures) or vertical (third and fourth figures) initial direction.
}%
\label{fig:4Ghull}%
\label{fig:lam2d}
\label{fig:lam2dmulti}%
\end{figure}

\subsection{A synthetic four gradient example}\label{ex:SyntheticFourGradient}
To illustrate the $\Dir$-convex envelope, we construct the following synthetic example, which is easier to visualize than the higher dimensional examples which follow.

\begin{example}
Consider again the set $K$ and the function $G$,  as in \eqref{K4g} of in Example~\ref{eg4gradient}.  Set
\[
\Dir_4 = \{ (1,0), (0,1), (1,1), (-1,1) \}
\]
The computed $\Dir_4$-convex hull is shown in Figure~\ref{fig:4Ghullsyn} below.
The shape of the directional convex hull is predictable. 
The $\Dir_4$-convex hull has a hexagonal
shape, and contains the $\Dir_2$-convex hull from Example~\ref{ex:ClassicalFourGradient}.
Figure~\ref{fig:lam2dsyn} shows how the laminate is computed by constructing
the $(H)$-sequence.
\end{example}

\begin{figure}[h]
\scalebox{.37}{
\hspace{-3cm}
\includegraphics{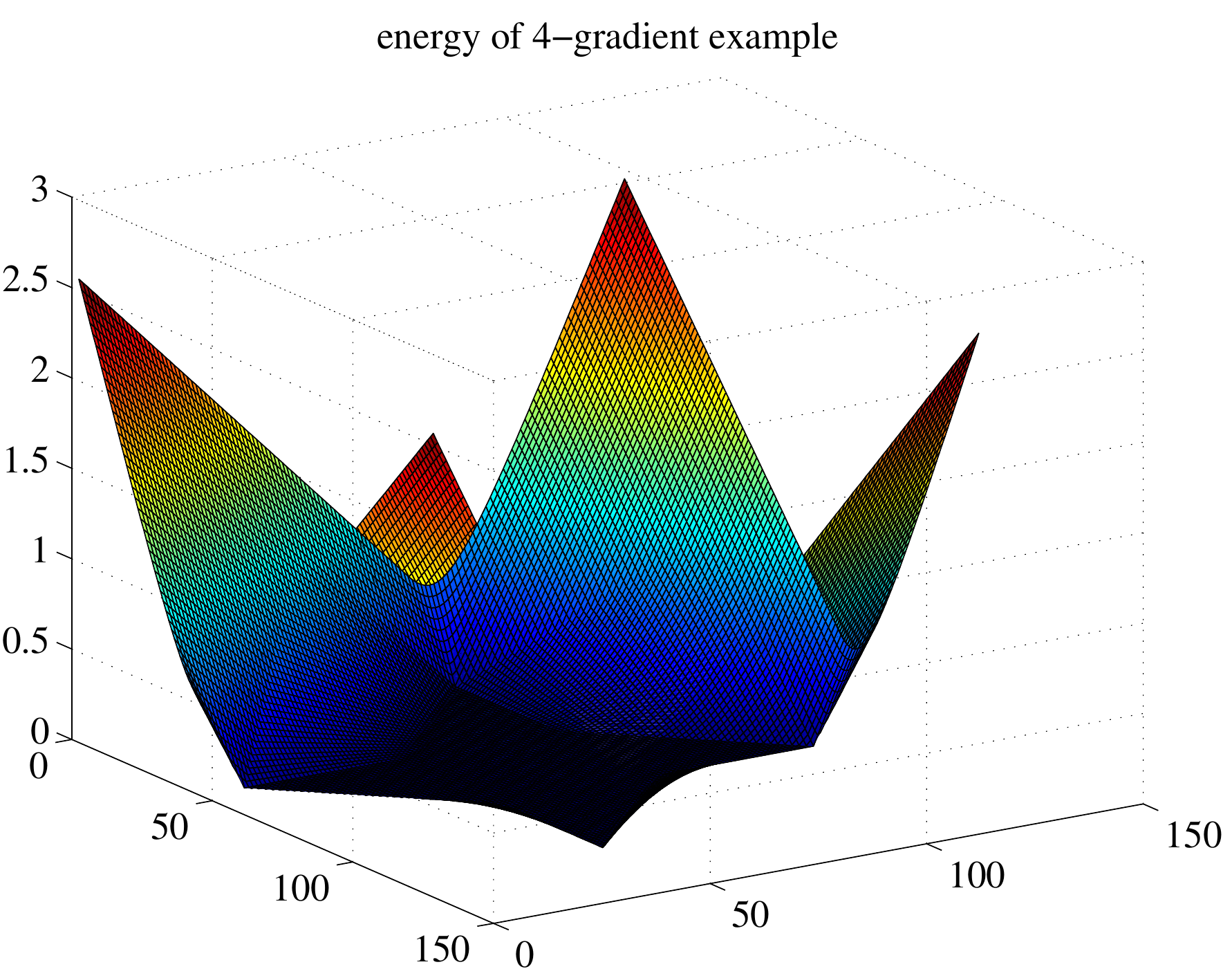}
\includegraphics{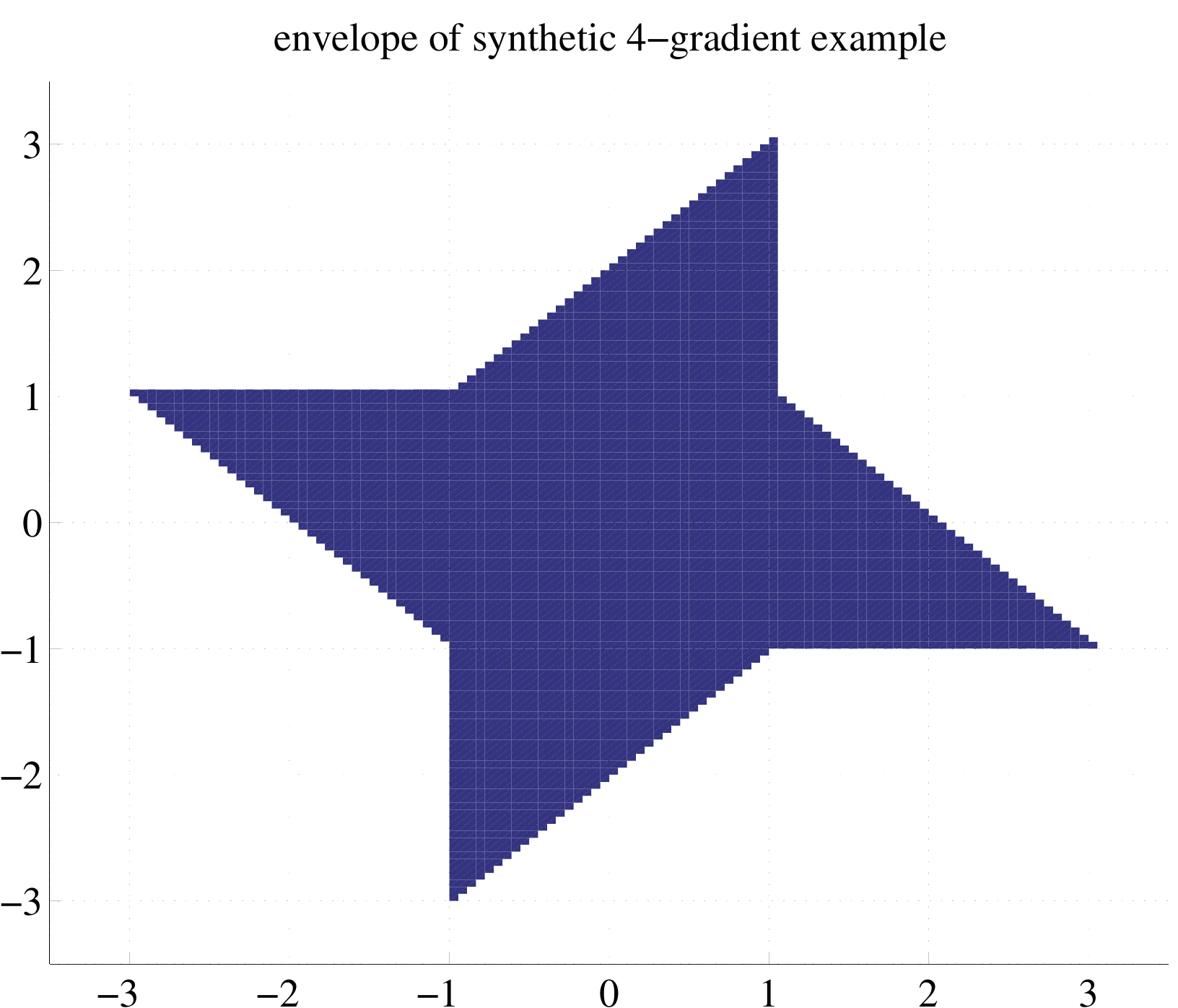}
}
\scalebox{.2}{
\includegraphics{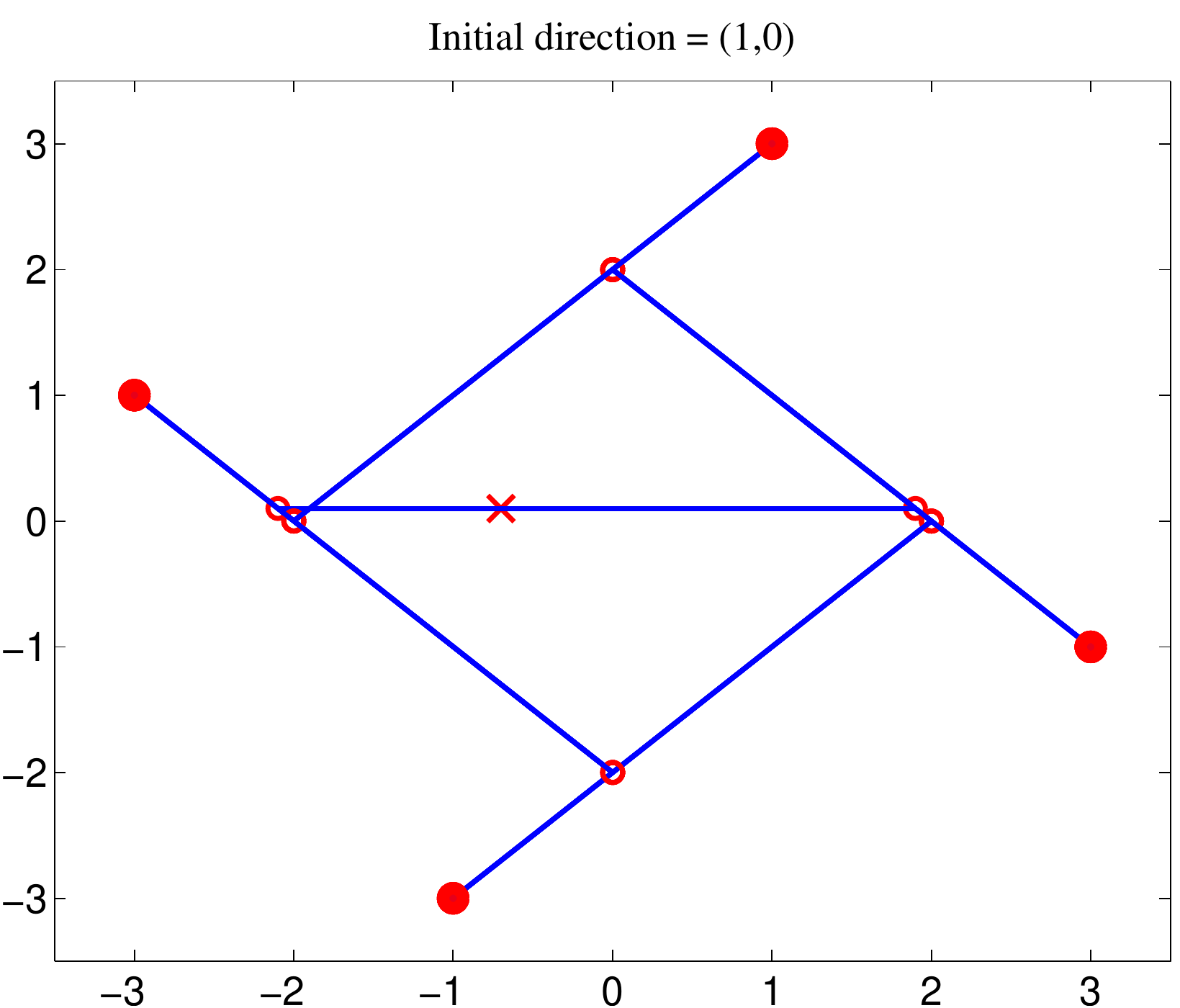}
\includegraphics{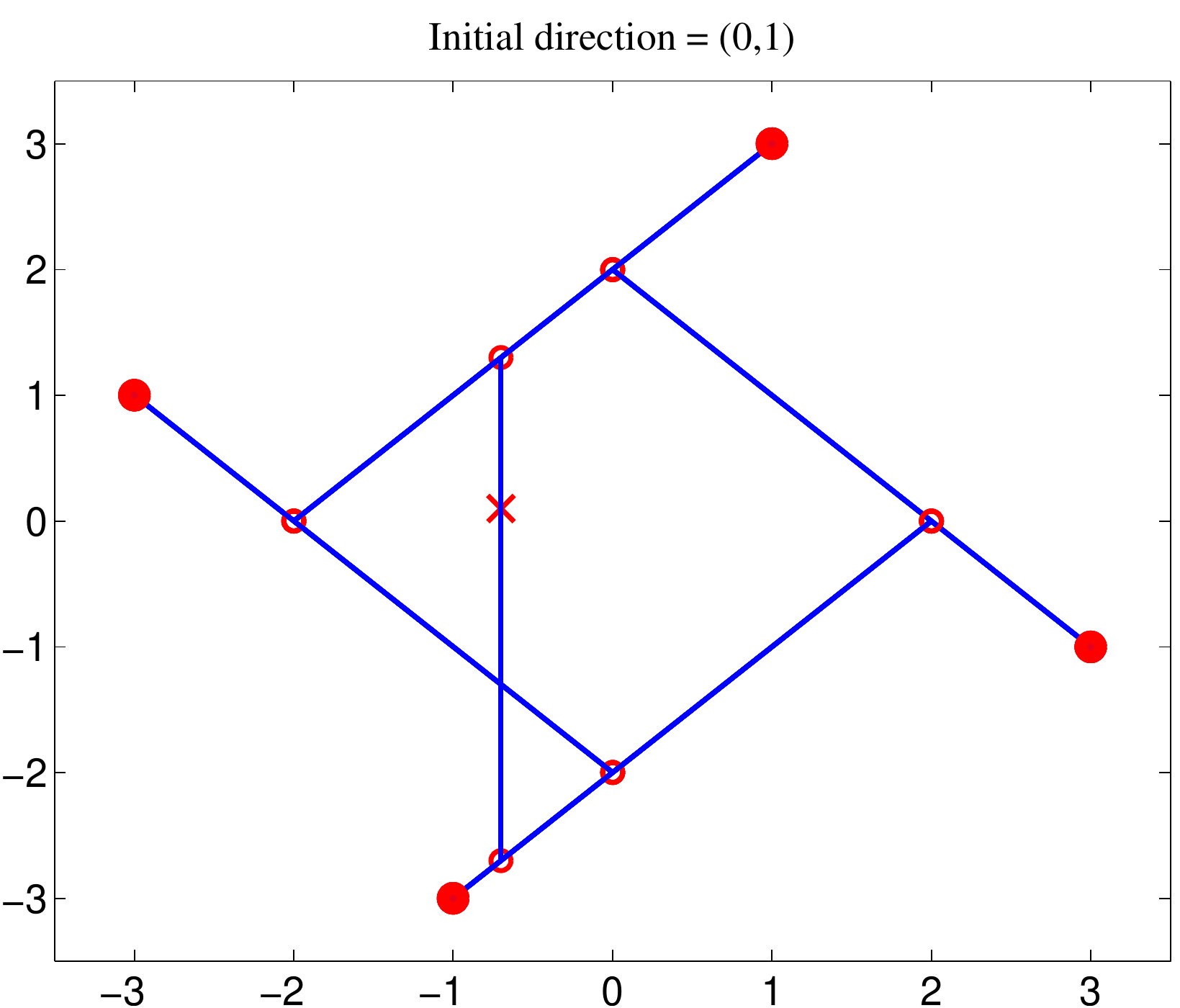}
\includegraphics{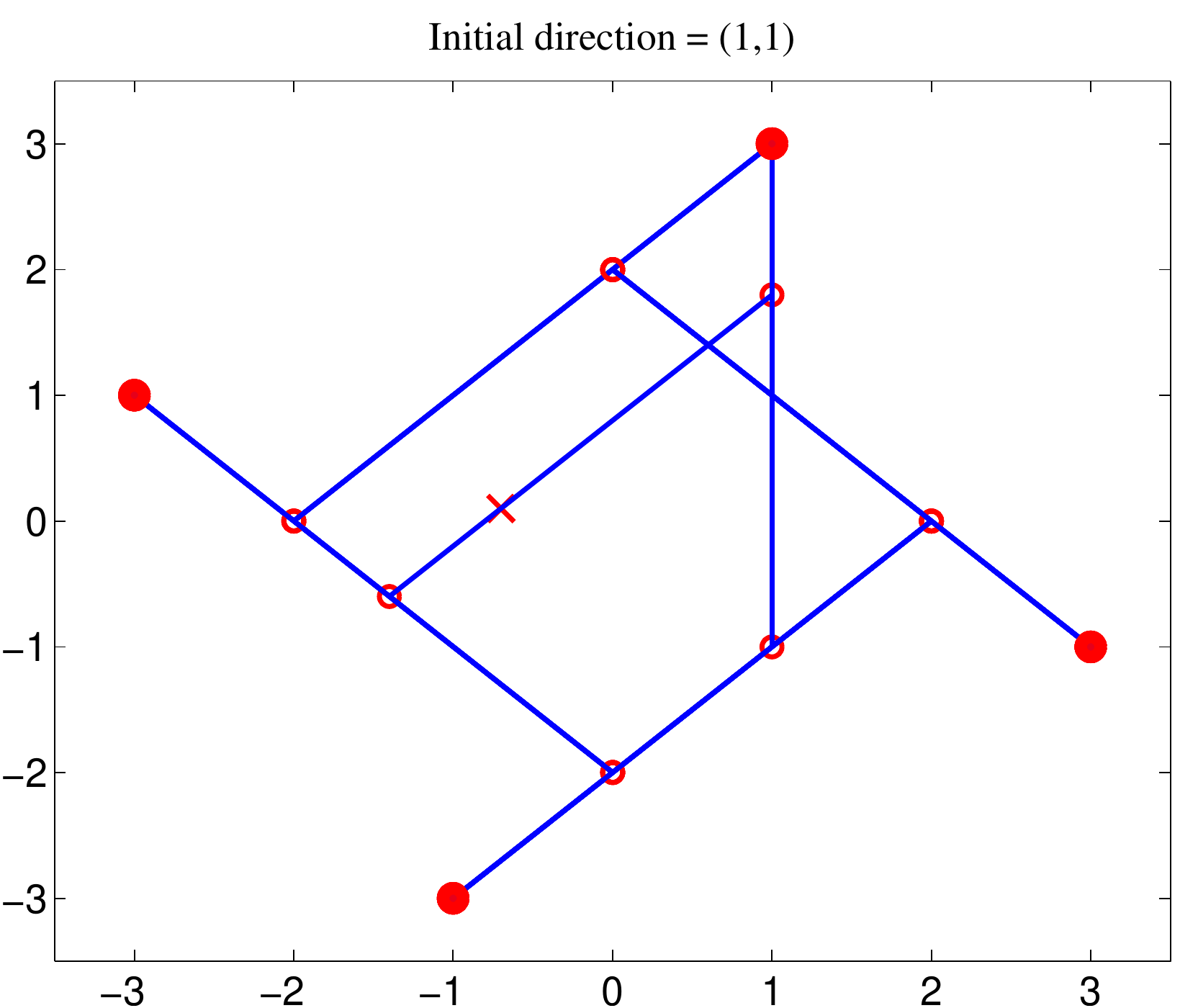}
\includegraphics{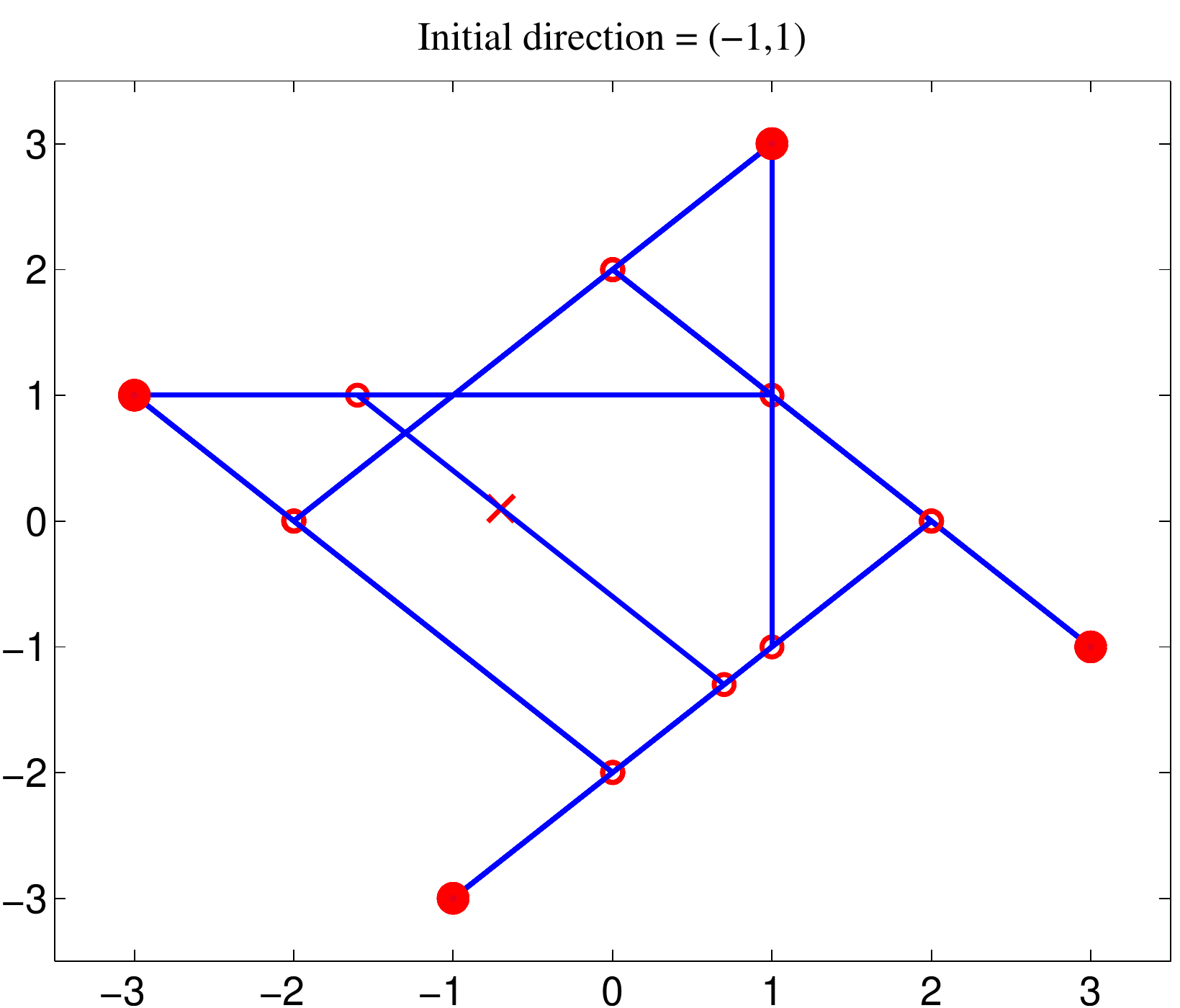}
}
\caption{Top left: $\Dir_4$-convex envelope, Top right: $\Dir_4$ convex hull of four points.
Bottom: different laminates with the same barycenter, generated using different initial directions.
}%
\label{fig:4Ghullsyn}%
\label{fig:lam2dsyn}%
\end{figure}

\subsection{Computation times and accuracy for the two dimensional examples}
In this section we present convergence results and solution times for the two dimensional examples. 

Table~\ref{table:2dlevelset} shows the convergence of the area of the computed convex hull for the two examples, as a function of the grid resolution.  
In Table~\ref{table:solutiontimes2d} we compare the solution time using two different methods: the function iteration and convexification along lines.  The maximum error tolerance was $10^{-8}$.   In this case (with only two directions) the latter method is faster.  In cases with more directions the opposite occurs.

\begin{table}[]
\centering
\begin{tabular}{llllllll}
Gridsize & $28^2$ & $42^2$ & $56^2$ & $70^2$ & $84^2$ & $98^2$ & $112^2$ \\\hline
Classic example & 7.0625 & 6.0278 & 5.5156 & 5.2100 & 5.0069 & 4.8622 & 4.7539 \\
Synthetic example & 14.063 & 13.361 & 13.016 & 12.810 & 12.674 & 12.577 & 12.504
\end{tabular}
\caption{Convergence of the area of the zero level set in terms of grid size for Examples~\ref{ex:ClassicalFourGradient} and \ref{ex:SyntheticFourGradient}. 
% The direction sets are fixed and $d\theta = 0$. 
}
\label{table:2dlevelset}
\end{table}

\begin{table}[pth]
\begin{tabular}
[c]{ccccc}%
N &  CPU Time (LS)  & Iterations (LS) & CPU Time (IS) & Iterations (IS)          \\\hline
43 & 0.52 & 17 & 0.72 & 839\\
71 & 1.51 & 17& 3.19 & 2257\\
127 & 5.02 & 18& 22.49 & 7036
\end{tabular}

\begin{tabular}
[c]{ccccc}%
N &  CPU Time (LS)  & Iterations (LS) & CPU Time (IS) & Iterations (IS)          \\\hline
43 & 0.63 & 11  & 0.5 & 398\\
71 & 1.66 & 11  & 2.3 & 1065\\
127 & 5.12 & 11 & 17.3 & 3330
\end{tabular}

\caption{Computation time of the rank one convex envelope for Example~\ref{ex:ClassicalFourGradient}.
Comparing the convexification along line solver (LS) with the explicit iterative solver (IS).
 $N$ is the number of points in each dimension. 
 Bottom: corresponding table for Example~\ref{ex:SyntheticFourGradient}.}%
\label{table:solutiontimes2d}%
\end{table}

\subsection{A three dimensional example}
Next we consider a synthetic three dimensional example. 
\begin{example}
Consider the set $K_6=\{A_{1},\dots ,A_{6}\}$, where the first four entries are given by \eqref{K4g} from 
Example~\ref{eg4gradient} and
\[
A_{5}=-A_{6}=\left(
\begin{array}
[c]{cc}%
0 & 3\\
0 & 0
\end{array}
\right)  .
\]
The six matrices occupy only three entries of the 2 by 2 matrices, so we regard it as a synthetic three dimensional problem. The corresponding directional convex envelope and laminates are computed below.
\end{example}

\begin{figure}[h]
\scalebox{.37}{
\hspace{-3cm}
\includegraphics{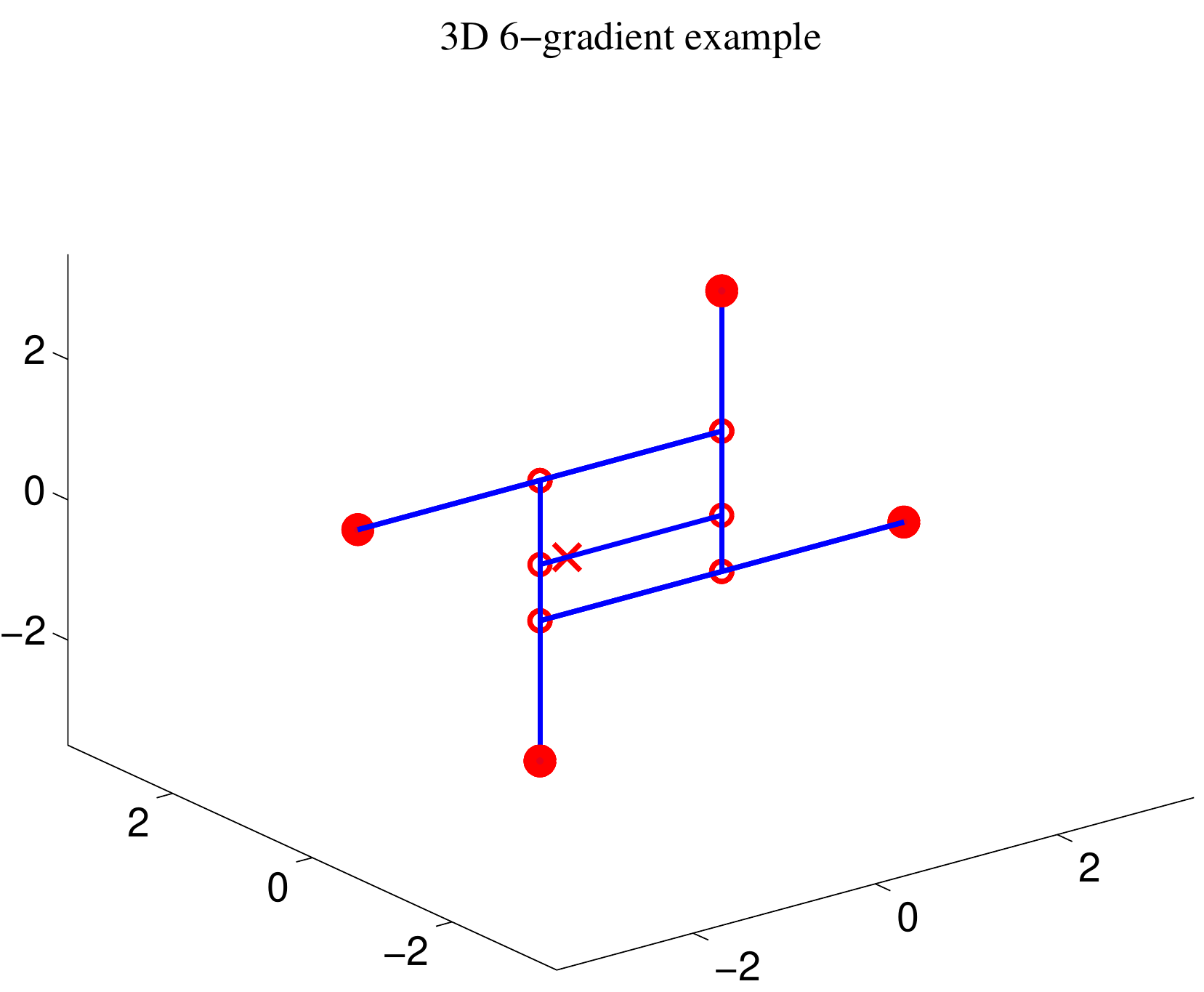}
\includegraphics{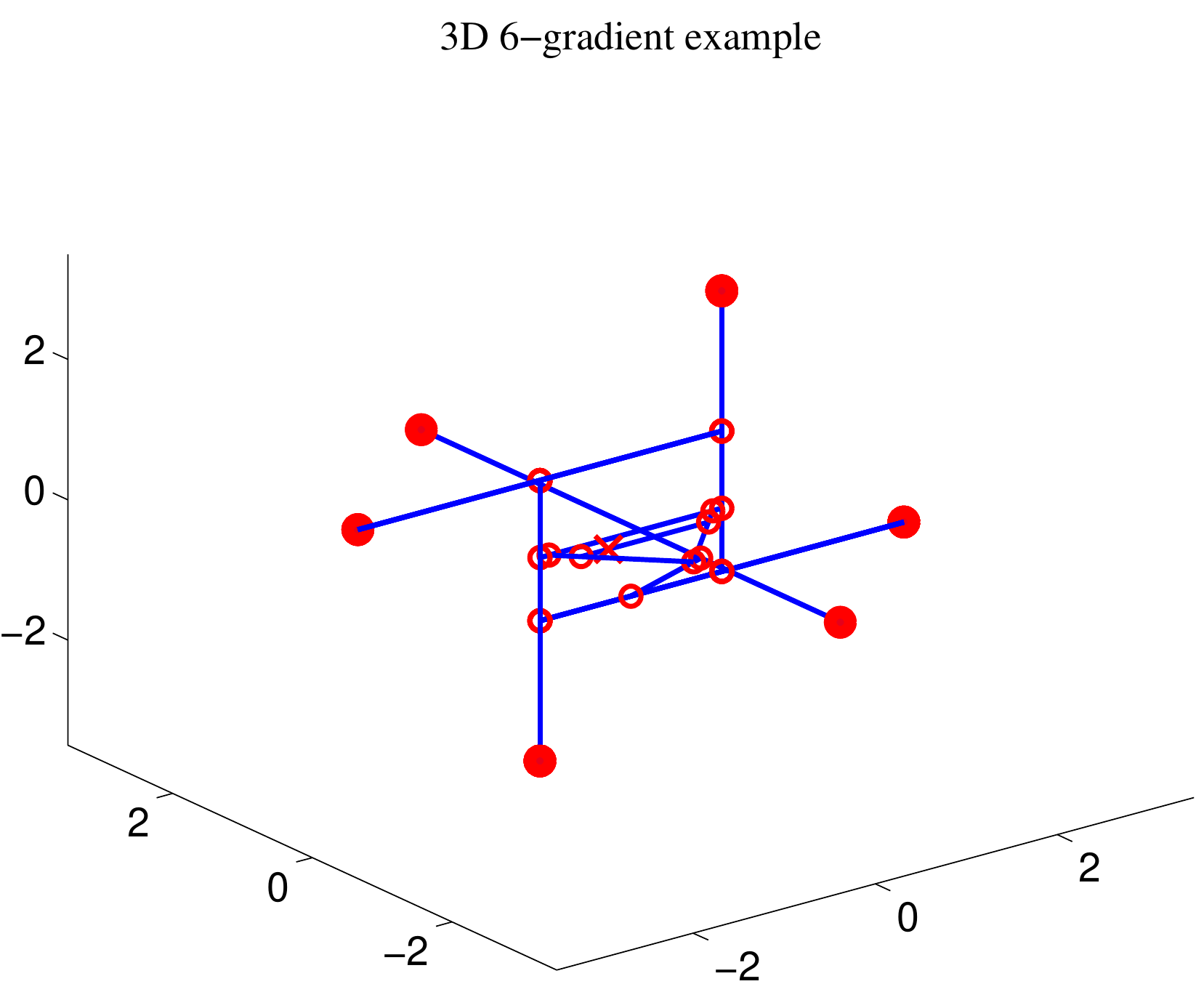}
}\caption{Laminate for the six gradient problem.}%
\label{fig:Lam6G}%
\end{figure}

Since the set $K_6$ falls on the subspace spanned by%
\[
\left\{  \left(
\begin{array}
[c]{cc}%
x & y\\
0 & z
\end{array}
\right)  :x,y,z\in\mathbb{R}\right\}  ,
\]
we consider rank one directions%
\[
\Dir_{7}=\left\{
\begin{array}
[c]{c}%
\left(
\begin{array}
[c]{cc}%
1 & 0\\
0 & 0
\end{array}
\right)  ,\left(
\begin{array}
[c]{cc}%
0 & 1\\
0 & 0
\end{array}
\right)  ,\left(
\begin{array}
[c]{cc}%
0 & 0\\
0 & 1
\end{array}
\right)  ,\left(
\begin{array}
[c]{cc}%
1 & 1\\
0 & 0
\end{array}
\right)  ,\\
\left(
\begin{array}
[c]{cc}%
0 & 1\\
0 & 1
\end{array}
\right)  ,\left(
\begin{array}
[c]{cc}%
1 & -1\\
0 & 0
\end{array}
\right)  ,\left(
\begin{array}
[c]{cc}%
0 & 1\\
0 & -1
\end{array}
\right)
\end{array}
\right\}  .
\]

Figure~\ref{fig:Lam6G} shows two laminates. The first one has its starting
point on the plane spanned by $\{A_{1},A_{2},A_{3},A_{4}\}$ so it resembles the
classical example, while the starting point for the second one is not. Below
we list a few sample laminates with the same barycenter as the second graph of
Figure~\ref{fig:Lam6G}. 

Let $\upsilon^{k}$ denote the laminate generated with initial decomposition direction
being the $k$-th vector in $\Dir_{7}$ (ordered as shown above). Write 
$\bar \upsilon^k =  \langle \upsilon^k,1\rangle_{K_6}$, for the concentration on
supporting set $K_6$.  This quantity is a measure of the accuracy of the approximation. We find after a few iterations,
\begin{align*}
\upsilon^{1} &=\left(0.238332,0.251663,0.168331,0.141665,0.066667,0.133333\right), & \bar \upsilon^{1}=0.999991
\\
\upsilon^{5}&=\left(
0.218326,0.231663,0.148330,0.121664,0.106667,0.173333\right), & \bar \upsilon^{5} =0.999984
\\
\upsilon^{6}&=\left(
0.207077,0.220413,0.137080,0.110415,0.129167,0.195833\right), & \bar \upsilon^{6} =0.999985.
\end{align*}

\subsection{Another three dimensional example}
We next consider an example which is described in~\cite[p171]%
{pedregal1997parametrized}.  In this problem, the gradients are identified with the three dimensional subspace of the form
\[
M(x,y,z) = 
\left(
\begin{array}
[c]{cc}%
x+z & z\\
z & y+z
\end{array}
\right)
%  \mapsto\left(  x,y,z\right)
\]
and the corresponding rank one directions are given by the set of directions which satisfy $\det(M) = 0$, $\Dir = \{xy+yz+xz=0\}.$ 

Consider the function $f(x,y,z)=xyz$ defined on the cube $\left[-1,1\right]^{3}$
with rank one directions contained in $\Dir$. 

For the computation, the direction vectors used, $\Dir_{24}$ consists of the vectors   $\{(1,0,0), (-1,2,2), (-2,3,6), (-12,3,4), (-6,10,15)\}$ and their permutations. These vectors were generated by taking two small integers and solving the equation $z = -xy/(x+y)$ for the third one. For example $(x,y) = (1,2)$ gives $z = -2/3$ and multiplying by $3$ gives the vector $(3,6,-2)$.  Since the stencils are wide, we needed to pad the domain by the appropriate amount.   Note that the density of direction vectors appears to be low for this example.  We extended the grid to account for the wide stencil, and we used a cutoff function which was a difference of exponentials in each coordinate to enforce \eqref{g_assumption} on the extended part of the grid.

The approximate solutions were computed using $\Dir_{24}$ and an interior grid size (neglecting the padding) of $21^3$ and $31^3$.  Solution values at the origin were  $-0.49786$ and $-0.50000$ for the smaller and larger grid, respectively.  These values are close to the known analytical value of $-1/2$.

\subsection{A four dimensional eight gradient problem}
\begin{example}\label{ex:8gradient}
Consider the set $K_8 =\{A_{1},\dots, A_{8}\}$, with the first four entries given by \eqref{K4g} from 
Example~\ref{eg4gradient} and
\[
A_{5}=-A_{7}=\left(
\begin{array}
[c]{cc}%
0 & -2\\
-1 & 0
\end{array}
\right)  ,\text{ }A_{6}=-A_{8}=\left(
\begin{array}
[c]{cc}%
0 & 1\\
-2 & 0
\end{array}
\right).
\]
No rank one connections exist in $K_8.$ 
For this eight-gradient problem, the visualization of the laminates is more difficult.    The example we computed here gives computational evidence for the existence of minimizers which are not nearly affine as proven in~\cite[Theorem 7.12]{Dacorogna2}.

As a test of consistency, we recover the laminates from Example~\ref{eg4gradient}, by taking a barycenter 
on the plane spanned by $\{A_{1},\dots,A_{4}\}.$  This is pictured in Figure~\ref{fig:8GLam371} top, 
which shows projection onto two planes of the laminate.
For general barycenters, the rank one convex hull has a more complex structure.  Figure \ref{fig:8GLam371} shows the laminate with barycenter in general position.   One example of laminate with the same barycenter as figure \ref{fig:8GLam371} is given by 
\[
\upsilon=(
0.275264,0.092443,0.041928,0.225774,0.207919,0.023986,0.007921,0.123975)
\]
with $\bar \upsilon^{1} =0.999212.$

We measured the  convergence of the volume of the zero level set, in 
 Table~\ref{table:4dlevelset}.  The increase in the volume going from $\Dir_{64}$ to $\Dir_{144}$ is significant, which shows the need for higher directional resolution.  However the change from $\Dir_{144}$ to  $\Dir_{256}$ is much smaller, which suggests convergence for this example.  Likewise, the volume is not changing much as a function of $\dx$.  The change in values in the middle column may just be an artifact of the grid, compared to the locations of the points of $K_8$.  
\end{example}

\begin{figure}[h]
\scalebox{.3}{
\hspace{-3cm}
\includegraphics{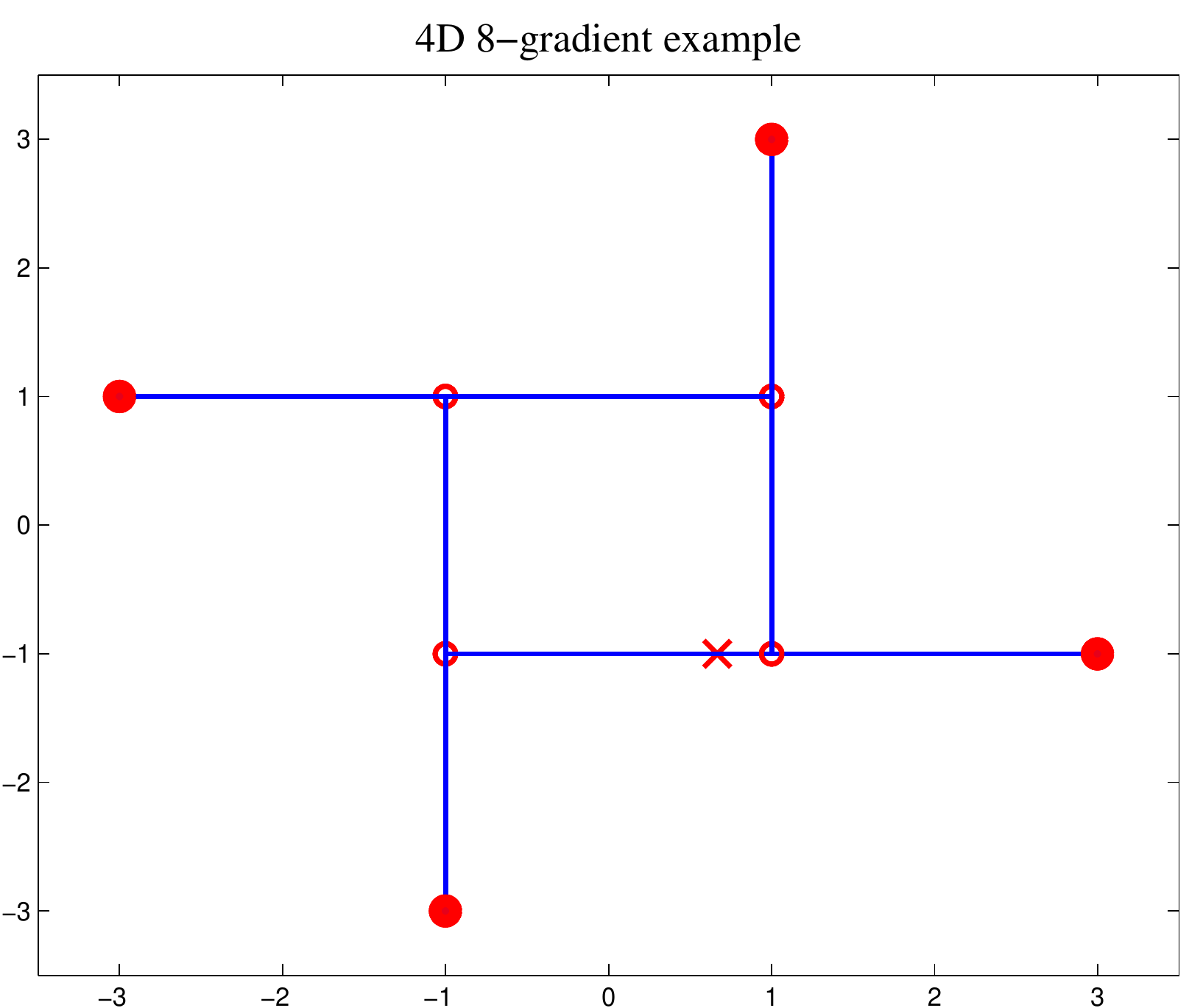}
\includegraphics{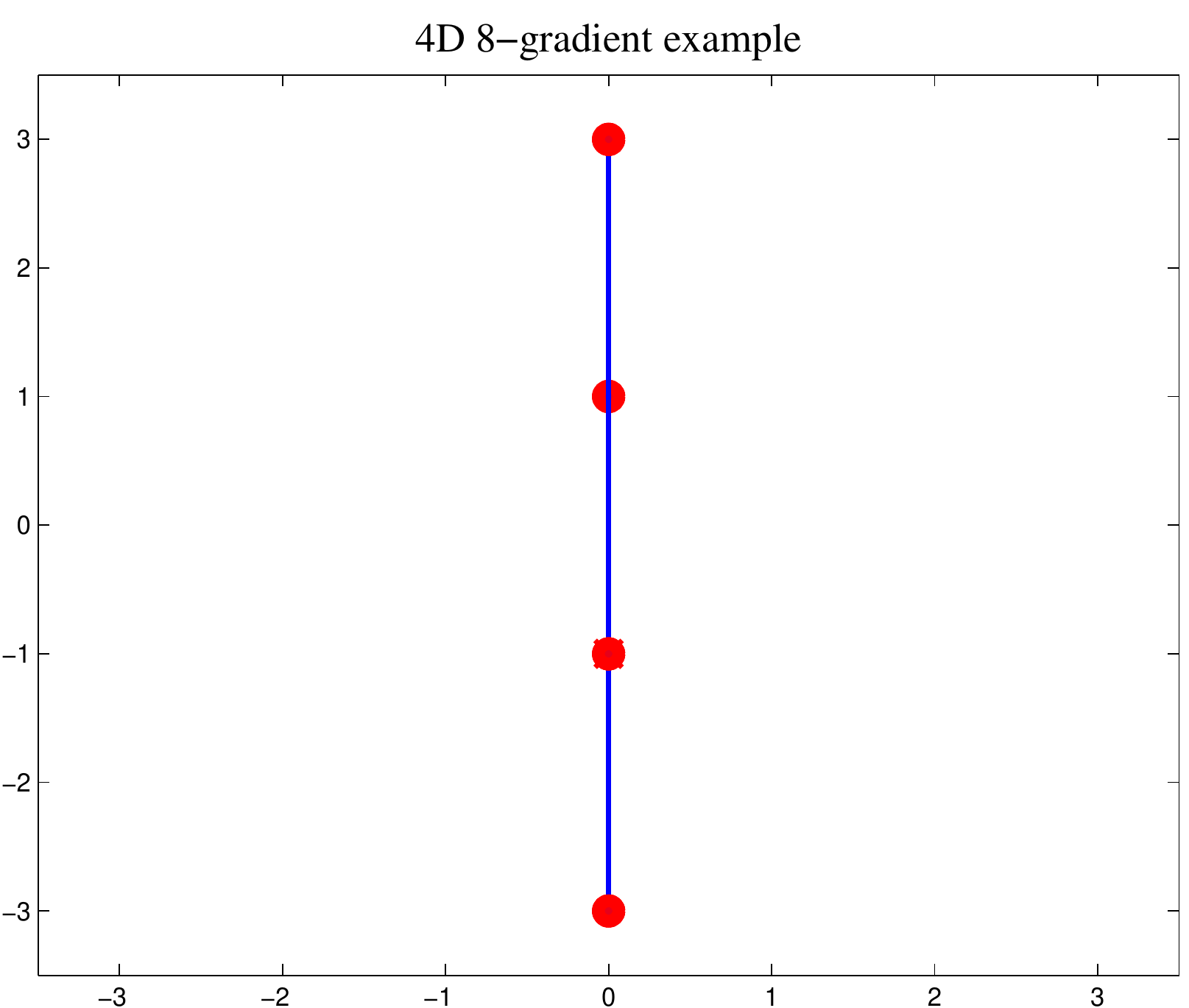}
}
\scalebox{.3}{
\hspace{-3cm}
\includegraphics{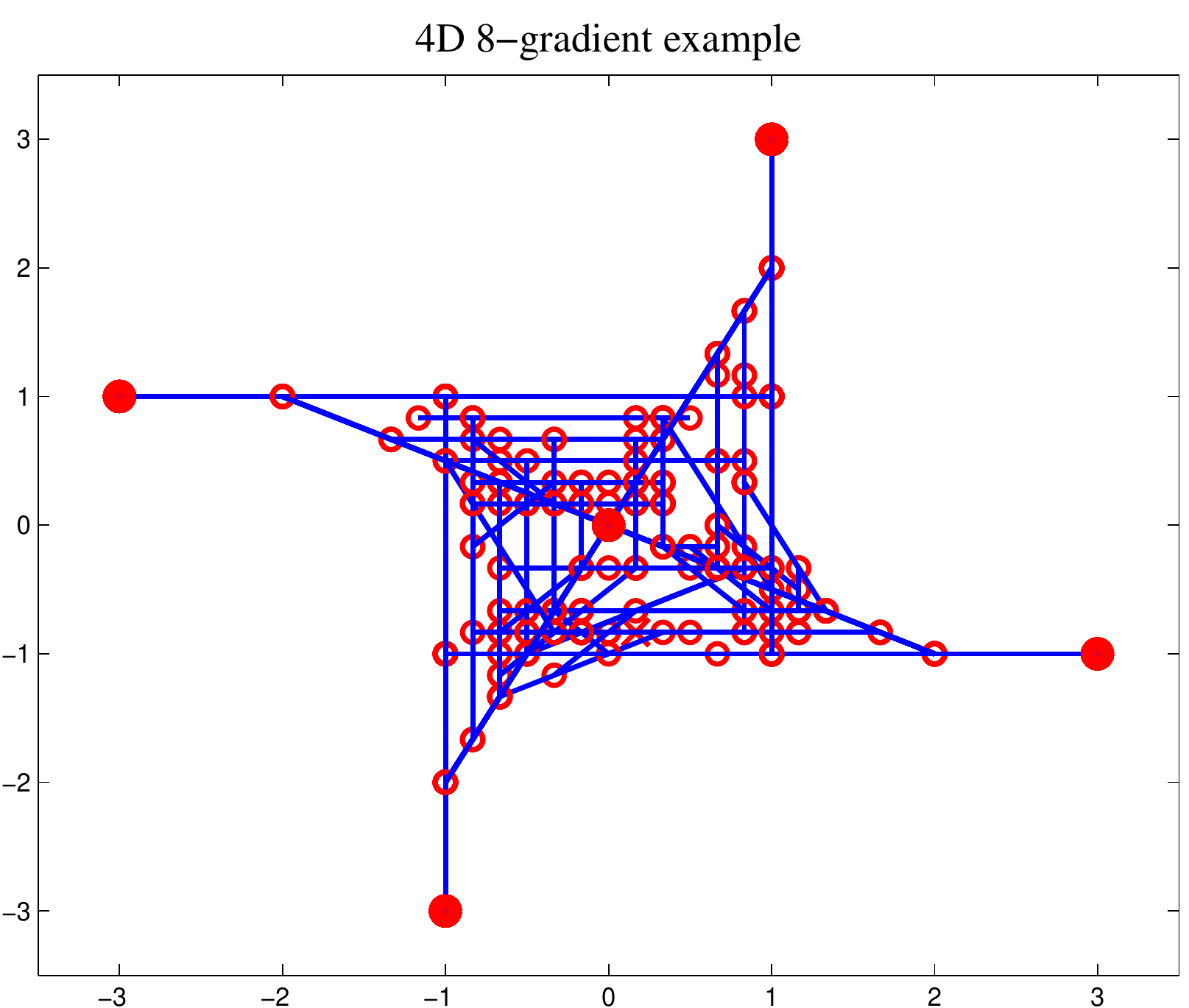}
\includegraphics{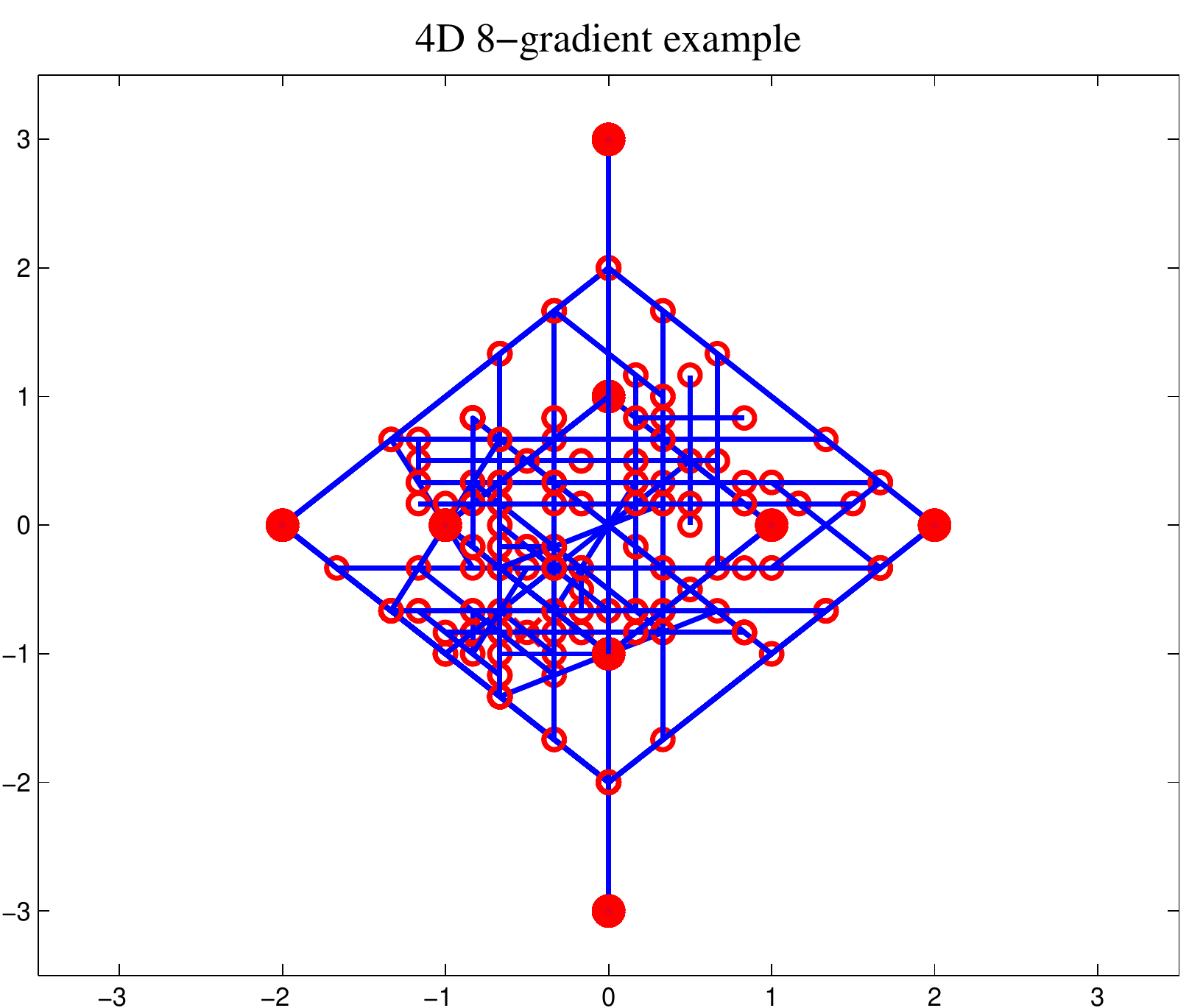}
}\caption{Projection onto the $x-w$ and $y-w$ plane of laminates from the four dimensional example. 
Top: barycentre lies on the $x-w$ plane. Bottom: barycentre in general position.}%
\label{fig:8GLam371}%
\end{figure}

%
%\begin{table}[pth]
%\begin{center}
%{\small
%\begin{tabular}
%[c]{ccc}%
%Gidsize & CPU Time(s) & Iterations\\\hline
%29 & 68.1 & 95\\
%43 & 968.6 & 170\\
%71 & 20708.5 & 400
%\end{tabular}
%}
%\end{center}
%\caption{Iterations and computation times for rank one convex envelope in four
%dimensional case. Max error is below $e^{-8}$}%
%\label{table:Time4d}%
%\end{table}

\begin{table}[ptb]
\centering
\begin{tabular}
[c]{ccllll}%
Gidsize & $dx$ & $\Dir_{16}$ & $\Dir_{64}$ & $\Dir_{144}$ & $\Dir_{256}$ \\\hline
$45^4$ & 0.2500 & 2.2227 & 6.5325 & 27.254 & 27.316\\
$57^4$ & 0.1667 & 1.5934 & 7.3773 & 24.606 & 25.396\\
$69^4$ & 0.1250 & 1.3792 & 6.7815 & 27.256 & 27.715
\end{tabular}
\caption{Convergence of the volume of the zero level set in terms of rank one directions for Example~\ref{ex:8gradient}. }%
\label{table:4dlevelset}
\end{table}

%%%
\section{Conclusions}
We introduced a nonlinear degenerate elliptic partial differential equation in the form of an obstacle problem for the rank one convex envelope (and more generally, for directional convex envelopes). 
The PDE is consistent and well-posed: there exist unique viscosity solutions and these solutions give the rank one convex envelope of the obstacle function. Existence of solutions continuous up to the boundary was established using Perron's method.

  A convergent finite difference scheme was presented: we showed that there exist unique solutions of the discrete equation, and that these solutions can be computed by a simple iterative method.   An iterative algorithm was presented for obtaining laminates from the computed envelopes. 

We computed several numerical examples in two, three and four spatial dimensions.  These included the classical Kohn-Strang example and the classical four matrix example.  We also computed laminates for two, three and four dimensional problems, including an eight gradient example, which gave rise to complex laminates.  In particular, we gave computational evidence for \cite[Theorem 7.12]{Dacorogna2} which states the existence of minimizers which are not nearly affine.

We studied the computational speed and the accuracy of the method, in terms of the spatial and directional resolution.  The largest problem, in four dimensions, used 25 million variables, and 256 directions.  This problem was computed in about half an hour on a recent model laptop.

Another possible extension would be to increase the accuracy of the method using a filtered scheme~\cite{froese2013convergent} to approximate directional derivatives in off grid directions.  An improvement in solution speed could be obtained by alternating the directional line solver with the iterative method as in \cite{ObermanQuasicConvex}.

We are limited by problem size from computing directly the three by three matrix case, since it gives rise to a nine dimensional problem. This is too large for our method which requires representing the function on a spatial grid.  However, special cases of the three by three case which lead to lower dimensional problems could be computed.   

There are other types of direction sets and other PDEs resulting from rank one convexity which we hope to study in the future. For example, rotation invariant rank one convex functions satisfy relations which could lead to a different PDE~\cite[Chapter 5]{Dacorogna2}.

% We mainly focused on laminates, but other regularization problems are possible, including microstructure problems with multiple wells or phase transitions (see~\cite{ball1989fine}).

%Regularity of the rank one convex envelope has been established in \cite{ball2000regularity} already, using minimizing Young measures.  However, it may be possible to establish similar results using PDE techniques, as was already done for the convex envelope in \cite{de2015optimal}.

\bibliographystyle{alpha}
\bibliography{R1CE_Jan_09}

\begin{thebibliography}{Obe08b}

\bibitem[ALL97]{alvarez1997convex}
Olivier Alvarez, J-M Lasry, and P-L Lions.
\newblock Convex viscosity solutions and state constraints.
\newblock {\em Journal de math{\'e}matiques pures et appliqu{\'e}es},
  76(3):265--288, 1997.

\bibitem[AO16]{ObermanQuasicConvex}
Bilal Abbasi and Adam~M Oberman.
\newblock A partial differential equation for the strictly quasiconvex
  envelope.
\newblock {\em arXiv:1612.06813}, 2016.

\bibitem[AP01a]{aranda2001numerical}
Ernesto Aranda and Pablo Pedregal.
\newblock Numerical approximation of non-homogeneous, non-convex vector
  variational problems.
\newblock {\em Numerische Mathematik}, 89(3):425--444, 2001.

\bibitem[AP01b]{pedregal2001computation}
Ernesto Aranda and Pablo Pedregal.
\newblock On the computation of the rank-one convex hull of a function.
\newblock {\em SIAM Journal on Scientific Computing}, 22(5):1772--1790, 2001.

\bibitem[Bal77]{BallElasticity}
John~M. Ball.
\newblock Convexity conditions and existence theorems in nonlinear elasticity.
\newblock {\em Arch. Rational Mech. Anal.}, 63(4):337--403, 1976/77.

\bibitem[Bar04]{bartels2004linear}
S{\"o}ren Bartels.
\newblock Linear convergence in the approximation of rank-one convex envelopes.
\newblock {\em ESAIM: Mathematical Modelling and Numerical Analysis},
  38(05):811--820, 2004.

\bibitem[Bar05]{bartels2005reliable}
S{\"o}ren Bartels.
\newblock Reliable and efficient approximation of polyconvex envelopes.
\newblock {\em SIAM Journal on Numerical Analysis}, 43(1):363--385, 2005.

\bibitem[BJ89]{BallJames87}
John Ball and Richard James.
\newblock Fine phase mixtures as minimizers of energy.
\newblock In {\em Analysis and Continuum Mechanics}, pages 647--686. Springer,
  Berlin, 1989.

\bibitem[BKK00]{ball2000regularity}
John~M Ball, Bernd Kirchheim, and Jan Kristensen.
\newblock Regularity of quasiconvex envelopes.
\newblock {\em Calculus of Variations and Partial Differential Equations},
  11(4):333--359, 2000.

\bibitem[BM06]{Bardi2006709}
Martino Bardi and Paola Mannucci.
\newblock On the {D}irichlet problem for non-totally degenerate fully nonlinear
  elliptic equations.
\newblock {\em Communications on Pure and Applied Analysis}, 5(4):709--731,
  2006.

\bibitem[BM13]{bardi2013comparison}
Martino Bardi and Paola Mannucci.
\newblock Comparison principles and {D}irichlet problem for fully nonlinear
  degenerate equations of {M}onge--{A}mp{\`e}re type.
\newblock In {\em Forum Mathematicum}, volume~25, pages 1291--1330, 2013.

\bibitem[BS91]{BSNum}
Guy Barles and Panagiotis~E. Souganidis.
\newblock Convergence of approximation schemes for fully nonlinear second order
  equations.
\newblock {\em Asymptotic Anal.}, 4(3):271--283, 1991.

\bibitem[CIL92]{CIL}
Michael~G. Crandall, Hitoshi Ishii, and Pierre-Louis Lions.
\newblock User's guide to viscosity solutions of second order partial
  differential equations.
\newblock {\em Bull. Amer. Math. Soc. (N.S.)}, 27(1):1--67, 1992.

\bibitem[CK88]{chipot1988equilibrium}
Michel Chipot and David Kinderlehrer.
\newblock Equilibrium configurations of crystals.
\newblock {\em Archive for Rational Mechanics and Analysis}, 103(3):237--277,
  1988.

\bibitem[CNS86]{caffarelli1986dirichlet}
Luis~A Caffarelli, Louis Nirenberg, and Joel Spruck.
\newblock The dirichlet problem for the degenerate monge-amp{\`e}re equation.
\newblock {\em Revista Matem{\'a}tica Iberoamericana}, 2(1-2):19--27, 1986.

\bibitem[Dac08]{Dacorogna2}
Bernard Dacorogna.
\newblock {\em Direct methods in the calculus of variations}, volume~78 of {\em
  Applied Mathematical Sciences}.
\newblock Springer, Berlin, second edition, 2008.

\bibitem[Dol99]{Dolzmann}
Georg Dolzmann.
\newblock Numerical computation of rank-one convex envelopes.
\newblock {\em SIAM J. Numer. Anal.}, 36(5):1621--1635 (electronic), 1999.

\bibitem[Dol03]{dolzmann2003variational}
Georg Dolzmann.
\newblock {\em Variational methods for crystalline microstructure-analysis and
  computation}.
\newblock Number 1803. Springer Science \& Business Media, 2003.

\bibitem[DPF15]{de2015optimal}
Guido De~Philippis and Alessio Figalli.
\newblock Optimal regularity of the convex envelope.
\newblock {\em Transactions of the American Mathematical Society},
  367(6):4407--4422, 2015.

\bibitem[DW00]{DolzmannWalkington}
G.~Dolzmann and N.~J. Walkington.
\newblock Estimates for numerical approximations of rank one convex envelopes.
\newblock {\em Numer. Math.}, 85(4):647--663, 2000.

\bibitem[FM09]{franvek2009computing}
Vojt{\v{e}}ch Fran{\v{e}}k and Ji{\v{r}}{\'\i} Matou{\v{s}}ek.
\newblock Computing d-convex hulls in the plane.
\newblock {\em Computational Geometry}, 42(1):81--89, 2009.

\bibitem[FO13]{froese2013convergent}
Brittany~D Froese and Adam~M Oberman.
\newblock Convergent filtered schemes for the {M}onge--{A}mp{\`e}re partial
  differential equation.
\newblock {\em SIAM Journal on Numerical Analysis}, 51(1):423--444, 2013.

\bibitem[Fro16]{froeseGauss}
Brittany~D Froese.
\newblock Convergent approximation of surfaces of prescribed {G}aussian
  curvature with weak {D}irichlet conditions.
\newblock {\em arXiv:1601.06315}, 2016.

\bibitem[GT83]{GTBook}
David Gilbarg and Neil~S. Trudinger.
\newblock {\em Elliptic partial differential equations of second order}, volume
  224 of {\em Grundlehren der Mathematischen Wissenschaften [Fundamental
  Principles of Mathematical Sciences]}.
\newblock Springer-Verlag, Berlin, second edition, 1983.

\bibitem[KS86a]{kohn1986optimal1}
Robert~V Kohn and Gilbert Strang.
\newblock Optimal design and relaxation of variational problems, i.
\newblock {\em Communications on Pure and Applied Mathematics}, 39(1):113--137,
  1986.

\bibitem[KS86b]{kohn1986optimal2}
Robert~V Kohn and Gilbert Strang.
\newblock Optimal design and relaxation of variational problems, ii.
\newblock {\em Communications on Pure and Applied Mathematics}, 39(2):139--182,
  1986.

\bibitem[Mor52]{morrey1952quasi}
Charles~B Morrey.
\newblock Quasi-convexity and the lower semicontinuity of multiple integrals.
\newblock {\em Pacific J. Math}, 2(1):25--53, 1952.

\bibitem[MP98]{matouvsek1998functional}
Jir̆{\'\i} Matou{\v{s}}ek and P~Plech{\'a}{\v{c}}.
\newblock On functional separately convex hulls.
\newblock {\em Discrete \& Computational Geometry}, 19(1):105--130, 1998.

\bibitem[Mul99]{Muller}
Stefan Muller.
\newblock Variational models for microstructure and phase transitions.
\newblock In {\em Calculus of Variations and Geometric Evolution Problems
  (Italy, 1996)}, pages 85--210. Springer, Berlin, 1999.

\bibitem[MW53]{MotzkinWasow}
Theodore~S. Motzkin and Wolfgang Wasow.
\newblock On the approximation of linear elliptic differential equations by
  difference equations with positive coefficients.
\newblock {\em J. Math. Physics}, 31:253--259, 1953.

\bibitem[Obe06]{ObermanSINUM}
Adam~M. Oberman.
\newblock Convergent difference schemes for degenerate elliptic and parabolic
  equations: {H}amilton-{J}acobi equations and free boundary problems.
\newblock {\em SIAM J. Numer. Anal.}, 44(2):879--895 (electronic), 2006.

\bibitem[Obe07]{ObermanConvexEnvelope}
Adam~M. Oberman.
\newblock The convex envelope is the solution of a nonlinear obstacle problem.
\newblock {\em Proc. Amer. Math. Soc.}, 135(6):1689--1694 (electronic), 2007.

\bibitem[Obe08a]{ObermanCENumerics}
Adam~M. Oberman.
\newblock Computing the convex envelope using a nonlinear partial differential
  equation.
\newblock {\em Math. Models Methods Appl. Sci.}, 18(5):759--780, 2008.

\bibitem[Obe08b]{ObermanEigenvalues}
Adam~M. Oberman.
\newblock Wide stencil finite difference schemes for the elliptic
  {M}onge-{A}mp\`ere equation and functions of the eigenvalues of the
  {H}essian.
\newblock {\em Discrete Contin. Dyn. Syst. Ser. B}, 10(1):221--238, 2008.

\bibitem[OS11]{oberman2011dirichlet}
Adam Oberman and Luis Silvestre.
\newblock The {D}irichlet problem for the convex envelope.
\newblock {\em Transactions of the American Mathematical Society},
  363(11):5871--5886, 2011.

\bibitem[Ped97]{pedregal1997parametrized}
Pablo Pedregal.
\newblock {\em Parametrized measures and variational principles}, volume~30.
\newblock Springer, 1997.

\end{thebibliography}
\end{document}